\title{On the inertia index of a mixed graph with the matching number}
\author{ Shengjie  He$^1$, Rong-Xia Hao$^1$\footnote{Corresponding author.
Emails: he1046436120@126.com (Shengjie  He), rxhao@bjtu.edu.cn (Rong-Xia Hao), yuaimeimath@163.com (Aimei Yu)}, Aimei Yu$^1$\\
{\small\em 1. Department of Mathematics, Beijing Jiaotong University, Beijing,
100044, China}\\
  }
\date{} \textwidth 16cm \textheight 22cm \topmargin 0 cm \hoffset
\newtheorem{theorem}{Theorem}[section]
\newtheorem{lemma}[theorem]{Lemma}
\newtheorem{corollary}[theorem]{Corollary}
\begin{document}
\baselineskip 0.50cm \maketitle

\begin{abstract}
A mixed graph $\widetilde{G}$ is obtained by orienting some edges of $G$, where
$G$ is the underlying graph of $\widetilde{G}$. The positive inertia index, denoted by $p^{+}(G)$,  and the negative inertia index, denoted by $n^{-}(G)$, of a mixed
graph $\widetilde{G}$ are the integers specifying the numbers of positive and negative eigenvalues
of the Hermitian adjacent matrix of $\widetilde{G}$, respectively. In this paper, we study the positive and negative inertia index of the mixed unicyclic graph. Moreover, we give the upper and lower bounds of the positive and negative inertia index of the mixed graph, and characterize the mixed graphs which attain the upper and lower bounds respectively.

 {\bf Keywords}: Inertia index; Mixed graph; Matching.

 {\bf 2010 MSC}: 05C50
\end{abstract}

\section{Introduction}
In this paper, we consider only graphs without multiedges and  loops. A $undirected$ $graph$ $G$ is denoted by $G=(V, E)$, where $V$ is the vertex set and $E$ is the edge set.
A $mixed$ $graph$ $\widetilde{G}$ is obtained by orienting some edges of $G$, where
$G$ is the underlying graph of $\widetilde{G}$.
Denote by $\widetilde{P_n}$, $\widetilde{S_n}$ and $\widetilde{C_n}$ a mixed path, mixed star and mixed cycle on $n$ vertices, respectively.
We refer to \cite{West} for terminologies and notations undefined here.

A vertex of a mixed graph $\widetilde{G}$ is called a $pendant$ $vertex$ if it is a vertex of degree one in $\widetilde{G}$, whereas a vertex of $\widetilde{G}$ is called a $quasi$-$pendant$ vertex if it is adjacent to a vertex of degree one in $\widetilde{G}$ unless it is a pendant vertex.
An $induced$ $subgraph$ $\widetilde{H}$ of $\widetilde{G}$ is a mixed graph such that the underlying graph of  $\widetilde{H}$ is an induced subgraph of the underlying graph of $\widetilde{G}$ and each edge of $\widetilde{H}$ has the same orientation (or non-orientation) as that in $\widetilde{G}$. For
$X \subseteq V(\widetilde{G})$, $\widetilde{G}-X$ is the mixed subgraph obtained from $\widetilde{G}$ by deleting all vertices in $X$ and
all incident edges or arcs. In particular, $\widetilde{G}-\{ x \}$ is usually written as $\widetilde{G}-x$ for simplicity.
For the sake of clarity, we use the notation $\widetilde{G}-\widetilde{H}$ instead of $\widetilde{G}-V(\widetilde{H})$ if $\widetilde{H}$ is an induced
subgraph of $\widetilde{G}$.

For an undirected $G$, denote by $c(G)$ the $dimension$ of cycle space of $G$,
that is $c(G)=|E(G)|-|V(G)|+\omega(G)$, where $\omega(G)$ is the number of connected components of $G$.
Two distinct edges in a graph $G$ are $independent$ if they do not have common end-vertex in $G$. A set of pairwise independent edges of $G$ is called a $matching$, while a matching with the maximum
cardinality is a $maximum$ $matching$ of $G$. The $matching$ $number$ of $G$, denoted by $m(G)$,
is the cardinality of a maximum matching of $G$.
For a mixed graph $\widetilde{G}$, the dimension of cycle space, denoted by $c(\widetilde{G})$,  and matching number, denoted by $m(\widetilde{G})$, are defined to be the dimension of cycle space and
matching number of its underlying graph, respectively.

The $Hermitian$-$adjacency$ $matrix$ of a mixed graph $\widetilde{G}$ of order $n$ is the $n \times n$ matrix $H(\widetilde{G}) = (h_{kl})$, where $h_{kl}=-h_{lk}=\mathbf{i}$ if
there is a directed edge (or an arc) from $v_{k}$ to $v_{l}$, where $\mathbf{i}$ is the imaginary number unit and $h_{kl}=h_{lk}=1$ if $v_{k}$ is connected to $v_{l}$ by an undirected edge, and $h_{kl}=0$ otherwise.
It is easy to see that $H(\widetilde{G})$ is a Hermitian  matrix, i.e., its conjugation and transposition is itself, that  is $H=H^{\ast}:=\overline{H}^{T}$. Thus
all its eigenvalues are real. The $positive$ $inertia$ $index$ (respectively, the $negative$ $inertia$ $index$) of a mixed graph $\widetilde{G}$, denoted by $p^{+}(\widetilde{G})$ (resp. $n^{-}(\widetilde{G})$), is defined to be the number of positive eigenvalues (resp. negative eigenvalues)
of $H(\widetilde{G})$. The $rank$ of a mixed graph $\widetilde{G}$, denoted by $rank(\widetilde{G})$,
is exactly the sum of $p^{+}(\widetilde{G})$ and $n^{-}(\widetilde{G})$. The $nullity$ of a mixed graph $\widetilde{G}$, denoted by $\eta(\widetilde{G})$, the algebraic multiplicity of the zero eigenvalues of $H(\widetilde{G})$. It is obviously that $\eta(\widetilde{G})=n-p^{+}(\widetilde{G})-n^{-}(\widetilde{G})$, where $n$ is the order of $\widetilde{G}$. The inertia index and nullity of graph have been studied by
many researchers, we refer to \cite{WONG,MAH,MAH2,RULA}.

The value of a mixed walk $\widetilde{W}=v_{1}v_{2}v_{3} \cdots v_{l}$ is $h(\widetilde{W})=h_{12}h_{23} \cdots h_{(l-1)l}$. A mixed
walk is positive or negative if $h(\widetilde{W}) = 1$ or $h(\widetilde{W}) = -1$, respectively. Note that for one
direction the value of a mixed walk or a mixed cycle is $\alpha$, then for the reversed direction
its value is $\overline{\alpha}$. Thus, if the value of a mixed cycle is 1 (resp. $-$1) in a direction, then its
value is 1 (resp. $-$1) for the reversed direction. In these situations, we just termed this
mixed cycle as a positive (resp. negative) mixed cycle without mentioning any direction.
A mixed graph is $positive$ (resp. $negative$) if each its mixed cycle is positive (resp. negative).
An $elementary$ graph is a mixed graph such that every component is an edge or a
mixed cycle, and every its edge-component is defined to be positive. A $real$
$elementary$ $subgraph$ of a mixed graph $G$ is an elementary subgraph such that all its mixed cycles are positive or negative.
For a mixed cycle $\widetilde{C}$ of a real elementary subgraph $\widetilde{G'}$, the $signature$ of $\widetilde{C}$, denoted by $\sigma(\widetilde{C})$, is defined as $|f-b|$, where $f$ denotes the number of forward-oriented edges and $b$ denotes the number of backward-oriented edges of $\widetilde{C}$.
The signature of a real elementary subgraph $\widetilde{G'}$, denoted by $\sigma(\widetilde{G'})$, is defined as the sum of the signatures of all the cycles in $\widetilde{G'}$.
Obviously, if $\widetilde{G'}$ is a real elementary subgraph of a mixed graph $\widetilde{G}$, then $\sigma(\widetilde{C'})$ is even for each cycle $\widetilde{C'}$ in $\widetilde{G'}$.

In recent years, the study on the Hermitian adjacent matrix and the characteristic polynomial of mixed graphs received increased attention. In \cite{LXL}, Liu and Li investigated the properties of the coefficients of the characteristic polynomial of
mixed graphs and cospectral problems among mixed graphs. Mohar \cite{MOHAR2} characterized all the mixed graphs with rank equal to 2. Wang et al. \cite{YBJ} studied the graphs with $H$-rank 3. Chen et al.
researched the bounds relationship between the rank and the matching number of a mixed graph $\widetilde{G}$ and the mixed graphs which attained the upper and lower bounds are characterized, respectively in \cite{LSC}. Daugherty \cite{SDAU} studied the inertia index of undirected unicyclic graphs and the implications
for closed-shells.  In \cite{FANYZ}, Fan and Wang studied bounds of the positive inertia index and negative inertia index of the adjacent matrix of an undirected graph.

In this paper, we study the positive and negative inertia index of the mixed unicyclic graph. Moreover, we give the upper and lower bounds of the positive and negative inertia index of the mixed graph, and characterize the mixed graphs which attain the upper and lower bounds respectively.

\section{Preliminaries}

In this section, we show some lemmas which will be useful in the following section.

\begin{lemma}\label{L21} $\rm{(Descartes' \ sign \ rule)}$  The number of positive roots of a polynomial $f(x)=f_{0}x^{n}+f_{1}x^{n-1}+\cdots +f_{n}$ with all real roots is equal to
the number of sign changes of $f_{i}$ proceeding from $f_{0}$ to $f_{n}$, ignoring $f_{i}=0$.
\end{lemma}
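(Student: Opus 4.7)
My plan is to derive the equality from the classical one-sided Descartes inequality applied twice, in conjunction with an upper bound on the sum of sign-change counts. Write $V(g)$ for the number of sign changes in the coefficient sequence of a real polynomial $g$ (zero entries skipped), and let $p,q,z$ denote the numbers of positive, negative, and zero roots of $f$ counted with multiplicity; the hypothesis that every root is real gives $p+q+z=n$.

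The first step is the standard (one-sided) Descartes inequality: for an arbitrary real polynomial $g$, the number of its positive roots is at most $V(g)$, with the difference being even. I would prove this by induction on $\deg g$, factoring out a positive root $g(x)=(x-r)h(x)$ and showing that passing from $h$ to $(x-r)h$ changes the sign-change count by a positive odd integer (via a direct accounting of sign patterns of the coefficients of $(x-r)h$). Applied to $f$ this gives $V(f)\ge p$; applied to $f(-x)$, whose positive roots are exactly the negative roots of $f$, it gives $V(f(-x))\ge q$.

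The second step is the complementary bound $V(f)+V(f(-x))\le n-z$. Writing $f(x)=x^{z}g(x)$ with $g(0)\ne 0$ removes the $x^{z}$ factor without affecting sign-change counts, so it suffices to bound $V(g)+V(g(-x))$. If the nonzero coefficients of $g$ sit at degree positions $0=d_0<d_1<\cdots<d_k=n-z$, then for a consecutive pair $(d_j,d_{j+1})$ with odd gap, exactly one of the two sign comparisons (in $g$ versus in $g(-x)$) registers a sign change, while with even gap either both or neither do; in every case the contribution to $V(g)+V(g(-x))$ is at most the gap length. Telescoping the gaps yields the total $d_k-d_0=n-z$.

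Chaining the three inequalities gives $p+q\le V(f)+V(f(-x))\le n-z=p+q$, and since $V(f)\ge p$ and $V(f(-x))\ge q$ individually while their sum equals $p+q$, equality must hold in each, so $V(f)=p$ as claimed. I expect the main obstacle to be the parity bookkeeping in the classical Descartes inequality of the first step; once that is settled, the gap-telescoping argument and the final chaining are essentially routine.
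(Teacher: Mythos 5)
Your argument is correct, and it is worth noting that the paper itself offers no proof of this lemma at all: it is stated as a classical fact (Descartes' rule for polynomials with only real roots) and simply used later to count positive eigenvalues from coefficient sign changes. Your derivation is the standard complete route to the equality version: the one-sided Descartes bound gives $p\le V(f)$ and $q\le V(f(-x))$, the gap-telescoping estimate gives $V(f)+V(f(-x))\le n-z$ (your case analysis on odd/even gaps is right, since an even gap contributing $2$ forces the gap to be at least $2$), and the all-real-roots hypothesis turns the chain $p+q\le V(f)+V(f(-x))\le n-z=p+q$ into equalities, forcing $V(f)=p$. Two small remarks: the parity clause in your first step (difference even) is never needed, only the inequality $p\le V(f)$; and that first step is the only place where real work remains, since the sign-pattern accounting for multiplication by $(x-r)$, $r>0$, is the classical fiddly induction --- but it is standard and your plan for it is sound. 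So your proposal supplies a legitimate proof of a statement the paper leaves unproved.
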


Suppose that $\lambda_{1} \geq \lambda_{2} \geq \cdots \geq  \lambda_{n}$ and $\kappa_{1} \geq \kappa_{2} \geq \cdots \geq  \kappa_{n-t}$
(where $t \geq 1$ is an integer) be two sequences of  real numbers. We say that the sequences $\lambda_{l}$ $(1 \leq l \leq n)$ and $\kappa_{j}$ $(1 \leq j \leq n-t)$
interlace if for every $s=1, 2, \cdots, n-t$, we have
$$\lambda_{s} \geq  \kappa_{s} \geq \lambda_{s+t}.$$

The usual version of the eigenvalue interlacing property states that the eigenvalues of any principal submatrix of a Hermitian matrix
interlace those of the whole matrix.

\begin{lemma} \label{L22}{\rm\cite{MOHAR}} If $H$ is a Hermitian matrix and $B$ is a principal submatrix of $H$, then the eigenvalues
of $B$ interlace those of $H$.
\end{lemma}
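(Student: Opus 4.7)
The statement is the classical Cauchy interlacing theorem for Hermitian matrices, and the plan is to derive it from the Courant--Fischer min-max characterization of the eigenvalues. First I would reduce the general case to the rank-one deletion case $t=1$ by iteration: if the result is known whenever a single row and the corresponding column are removed, then applying it $t$ times in succession to a chain of principal submatrices yields the required inequalities $\lambda_s \geq \kappa_s \geq \lambda_{s+t}$ for all admissible $s$.

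For the base case $t=1$, let $H$ be $n \times n$ Hermitian and let $B$ be the $(n-1) \times (n-1)$ principal submatrix obtained by deleting the $i$-th row and column. Every vector $y \in \mathbb{C}^{n-1}$ lifts canonically to $\widetilde{y} \in \mathbb{C}^{n}$ by inserting a zero in the $i$-th coordinate, and this lift satisfies $\widetilde{y}^{\ast} H \widetilde{y} = y^{\ast} B y$ and $\widetilde{y}^{\ast} \widetilde{y} = y^{\ast} y$. Thus any $s$-dimensional subspace $U \subseteq \mathbb{C}^{n-1}$ lifts to an $s$-dimensional subspace $\widetilde{U} \subseteq \mathbb{C}^{n}$ lying inside the coordinate hyperplane $\{x : x_i = 0\}$, and the Rayleigh quotient of $B$ on $U$ coincides with the Rayleigh quotient of $H$ on $\widetilde{U}$.

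The upper bound $\kappa_s \leq \lambda_s$ then reads
\[
\kappa_s \;=\; \max_{\dim U = s}\;\min_{0 \neq y \in U}\frac{y^{\ast} B y}{y^{\ast} y} \;=\; \max_{\substack{\widetilde{U}\subset\{x_i=0\}\\ \dim \widetilde{U}=s}}\;\min_{0\neq \widetilde{y}\in \widetilde{U}}\frac{\widetilde{y}^{\ast} H \widetilde{y}}{\widetilde{y}^{\ast}\widetilde{y}} \;\leq\; \lambda_s,
\]
since the outer maximum on the right is taken over a strictly smaller family of subspaces than in the Courant--Fischer formula for $\lambda_s$. The lower bound $\kappa_s \geq \lambda_{s+1}$ follows symmetrically from the dual Courant--Fischer identity $\lambda_{s+1} = \min_{\dim V = n-s}\max_{0\neq x \in V} \tfrac{x^{\ast} H x}{x^{\ast} x}$: every $(n-s)$-dimensional subspace of $\mathbb{C}^{n}$ intersects the hyperplane $\{x_i = 0\}$ in dimension at least $n-s-1$, so its projection into $\mathbb{C}^{n-1}$ provides a test subspace for $\kappa_s$ through the min-max formula. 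Equivalently, one can apply the upper bound to $-H$ and $-B$ to flip the direction of all inequalities.

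The argument is essentially routine; the only bookkeeping to watch is the dimension count in the lower bound and the fact that the interlacing indices shift by exactly one for each deleted row/column, so that induction on $t$ produces the required $\lambda_{s+t}$ on the right. Since Lemma~\ref{L22} is cited from \cite{MOHAR}, this sketch only needs to recall the standard proof rather than develop new machinery; the main conceptual point is that the coordinate embedding $\mathbb{C}^{n-1} \hookrightarrow \mathbb{C}^{n}$ preserves Rayleigh quotients, which makes Courant--Fischer yield both interlacing inequalities with almost no computation.
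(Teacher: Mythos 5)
The paper itself gives no proof of this lemma: it is quoted directly from the cited reference (Guo--Mohar) as the classical Cauchy interlacing theorem, so there is no in-paper argument to compare against. Your proof is the standard one and is essentially correct: the reduction to the case $t=1$ by iterating along a chain of principal submatrices is sound (the index shifts add up to exactly $t$), and the upper bound $\kappa_s \leq \lambda_s$ via the Rayleigh-quotient-preserving embedding $\mathbb{C}^{n-1}\hookrightarrow\{x: x_i=0\}\subset\mathbb{C}^n$ and Courant--Fischer is exactly right. The one wrinkle is your primary argument for the lower bound: intersecting an $(n-s)$-dimensional subspace of $\mathbb{C}^n$ with the hyperplane $\{x_i=0\}$ yields a subspace of dimension at least $n-s-1$, which in the min-max (dual) formula for $B$ is a test subspace for $\kappa_{s+1}$, not $\kappa_s$, so as written that dimension count gives $\kappa_{s+1}\leq\lambda_{s+1}$ rather than $\kappa_s\geq\lambda_{s+1}$. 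The clean fixes are either to lift an optimal $(n-s)$-dimensional subspace for $\kappa_s$ in the dual formula and compare with the dual formula for $\lambda_{s+1}$ (restricting the family over which the minimum is taken can only increase it), or to use the max-min form on an optimal $(s+1)$-dimensional subspace for $\lambda_{s+1}$ and intersect with the hyperplane; your stated alternative of applying the already-proved upper bound to $-H$ and $-B$ is also perfectly valid and closes the argument, so the proposal as a whole stands.
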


The characteristic polynomial of the Hermitian-adjacent matrix of a mixed graph $\widetilde{G}$ is defined as
$$\phi(\widetilde{G}, \lambda)=det(\lambda I_{n}-H(\widetilde{G}))=\lambda^{n}+a_{1}\lambda^{n-1}+a_{2}\lambda^{n-2}+ \cdots +a_{n},$$
where $I_{n}$ denoted the unit matrix of  order $n$.

\begin{lemma} \label{L23}{\rm\cite{LXL}} Let $\mathscr{B}_{j}$ be the set of real elementary subgraph with $j$ vertices of $\widetilde{G}$.
Then the coefficient $a_{j}$ of the characteristic polynomial of the Hermitian-adjacent matrix of  $\widetilde{G}$ is
$$a_{j}=\sum\limits_{B \in \mathscr{B}_{j}}(-1)^{\frac{1}{2}\sigma(B)+\omega(B)} \cdot 2^{ c(B)}, j=1, 2, \cdots, n,$$
where $\omega(B)$ denotes the number of components of $B$ and $c(B)$ is the number of cycles in $B$.
\end{lemma}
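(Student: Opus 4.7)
The plan is to combine two standard ingredients: the expansion of the characteristic polynomial coefficient as a signed sum of principal minors, and the Leibniz formula for determinants applied to the Hermitian matrix $H(\widetilde G)$. The first gives
\[
a_j=(-1)^j\sum_{S\subseteq V(\widetilde G),\,|S|=j}\det H(\widetilde G)[S];
\]
the second expands each $j\times j$ minor into a signed sum over permutations $\pi$ of $S$. Since the diagonal of $H(\widetilde G)$ is zero, only fixed-point-free permutations contribute, and these are in bijection with disjoint unions of $2$-cycles and longer cycles that cover $S$. Geometrically, such a decomposition is an elementary spanning subgraph of $\widetilde G[S]$ together with a choice of direction on each of its cycle components.

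Next I would compute the contribution of a fixed elementary subgraph $B$ with $V(B)=S$. An edge component $\{u,v\}$ contributes $h_{uv}h_{vu}=1$, whether $uv$ is undirected ($1\cdot 1$) or oriented ($\mathbf i\cdot(-\mathbf i)$). A cycle component $\widetilde C$ of length $\ge 3$ is realised by two distinct permutations (the two cyclic orientations), so its combined contribution is $h(\widetilde C)+\overline{h(\widetilde C)}=2\,\mathrm{Re}\,h(\widetilde C)$. Writing $h(\widetilde C)=\mathbf i^{f+b}(-1)^b$ with $f,b$ the numbers of forward/backward arcs in a chosen reference direction, this is purely imaginary whenever $f+b$ is odd, so such a cycle contributes $0$. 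Thus only elementary subgraphs whose every cycle has real value survive, which is exactly the real elementary subgraphs in $\mathscr B_j$.

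For a real elementary subgraph $B$, the parity identity $(f+b)/2+b\equiv(f-b)/2\equiv|f-b|/2\pmod 2$ yields $h(\widetilde C)=(-1)^{\sigma(\widetilde C)/2}$, so the product of values over all cycle components of $B$ equals $(-1)^{\sigma(B)/2}$. A standard fact about permutations says each contributing $\pi$ on $j$ symbols has sign $(-1)^{j-\omega(B)}$ since it has $\omega(B)$ disjoint cycles. Multiplying the sign, the product of $h$-values, and the factor $2^{c(B)}$ accounting for the two orientations on each of the $c(B)$ mixed cycles, and then absorbing the outer $(-1)^j$ into $(-1)^{j-\omega(B)}$ to leave $(-1)^{\omega(B)}$, gives
\[
a_j=\sum_{B\in\mathscr B_j}(-1)^{\omega(B)+\sigma(B)/2}\cdot 2^{c(B)},
\]
as claimed. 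The only nonroutine step is the parity manipulation identifying $h(\widetilde C)$ with $(-1)^{\sigma(\widetilde C)/2}$, which is what makes the defined quantity $\sigma(B)$ (rather than some other cycle invariant) the right exponent; the rest is careful bookkeeping of the Leibniz expansion and of the two orientations of each cycle.
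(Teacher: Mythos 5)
Your proposal is correct: the paper states Lemma \ref{L23} without proof, citing Liu and Li, and your argument---expanding $a_j$ as $(-1)^j$ times the sum of $j\times j$ principal minors, applying the Leibniz formula so that only fixed-point-free permutations (i.e.\ elementary subgraphs with an orientation chosen on each cycle) survive, pairing the two traversals of each cycle to get $2\,\mathrm{Re}\,h(\widetilde C)$ (which kills cycles with non-real value and yields the factor $2^{c(B)}$ on the real elementary subgraphs), and the parity computation $h(\widetilde C)=(-1)^{\sigma(\widetilde C)/2}$ combined with the permutation sign $(-1)^{j-\omega(B)}$---is exactly the standard Sachs-type derivation used in that reference. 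I see no gaps; the key parity identity $(f+b)/2+b\equiv|f-b|/2\pmod 2$ is verified correctly, and the bookkeeping of signs and orientations is sound.
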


The graph $\widetilde{G} \in \mathscr{G}$ if $\widetilde{G}$ has the following properties: (1) $\widetilde{G}$ contains at least one cycle but is not the disjoint union the disjoint cycles, (2) any two cycles of $\widetilde{G}$ share no common vertices if $\widetilde{G}$ contains more than one cycle. Contracting each cycle of a graph $\widetilde{G} \in \mathscr{G}$ into a vertex (called cyclic vertex), we obtain a forest denoted by $T_{\widetilde{G}}$. Denote by $[T_{\widetilde{G}}]$ the subgraph of
$T_{G}$ induced by all non-cyclic vertices. For a graph $\widetilde{G} \in \mathscr{G}$, let $\mathscr{F}(\widetilde{G})$ be the set of edges of $\widetilde{G}$ which has an endpoint on a cycle and the other
endpoint outside the cycle.

\begin{lemma} \label{L24}{\rm\cite{SDAU}} Let $G$ be a undirected unicyclic graph
containing the cycle $C_{q}$. Then

$$(p^{+}(G), n^{-}(G))=\left\{
             \begin{array}{ll}
               (m(G)-1, m(G)-1), & \hbox{if $q=4k$ and $M \cap \mathscr{F}(G)=\emptyset$ for any }\\
                                  & \hbox{maximum matching $M$ of $G$;} \\
               (m(G)+1, m(G)), & \hbox{if $q=4k+1$ and $m(G)=m(G-C_{q})+\frac{q-1}{2}$;} \\
               (m(G), m(G)+1), & \hbox{if $q=4k+3$ and $m(G)=m(G-C_{q})+\frac{q-1}{2}$;} \\
               (m(G), m(G)), & \hbox{otherwise.}
             \end{array}
           \right.
$$
\end{lemma}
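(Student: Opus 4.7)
The plan is to apply Lemma \ref{L23} to get an explicit formula for each coefficient $a_j$ of $\phi(G,\lambda)=\lambda^n+a_1\lambda^{n-1}+\cdots+a_n$ in terms of matching counts, then extract $p^{+}(G)$ and $n^{-}(G)$ from the signs of the $a_j$ via Descartes' rule (Lemma \ref{L21}). Since $G$ is undirected and unicyclic with unique cycle $C_q$, every cycle signature is $0$, and every real elementary subgraph of $G$ is either a matching or the union of $C_q$ with a matching of $G-C_q$. Writing $m_k(H)$ for the number of $k$-matchings of $H$, Lemma \ref{L23} yields
\[
 a_j \;=\; (-1)^{j/2}\,m_{j/2}(G)\;+\;\epsilon_j\,(-1)^{(j-q)/2+1}\cdot 2\,m_{(j-q)/2}(G-C_q),
\]
where the first summand appears only for even $j$ and $\epsilon_j=1$ exactly when $j\geq q$ and $j\equiv q\pmod{2}$.

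I would then read off the rank $r(G)=p^{+}(G)+n^{-}(G)$ as the largest $j$ with $a_j\neq 0$, and count the number of sign changes along $(1,a_1,\dots,a_n)$ (for positive eigenvalues) and along $(1,-a_1,a_2,\dots,(-1)^n a_n)$ (for negative eigenvalues). For $q=4k$, the two summands in $a_{2m(G)}$ carry opposite signs relative to $(-1)^{m(G)}$; when every maximum matching of $G$ avoids $\mathscr{F}(G)$, each such matching must decompose as a maximum matching of $G-C_q$ together with one of the two perfect matchings of $C_{4k}$, yielding $m_{m(G)}(G)=2\,m_{m(G)-q/2}(G-C_q)$ and an exact cancellation in $a_{2m(G)}$; the rank drops to $2m(G)-2$, the remaining nonzero even-indexed coefficients alternate strictly in sign, and Descartes' rule returns $p^{+}(G)=n^{-}(G)=m(G)-1$. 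For $q=4k+2$, the same two summands reinforce each other, the rank equals $2m(G)$, and balanced sign changes give $p^{+}(G)=n^{-}(G)=m(G)$.

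For odd $q$, cycle contributions occur only at odd indices. The hypothesis $m(G)=m(G-C_q)+(q-1)/2$ makes $m_{m(G-C_q)}(G-C_q)$ nonzero and identifies it with the leading cycle contribution at $j=2m(G)+1$; hence $a_{2m(G)+1}\neq 0$ and the rank rises to $2m(G)+1$. Tracking the sign $(-1)^{(q-1)/2+1}$ shows this top coefficient adds exactly one extra sign change in the positive direction when $q=4k+1$ (so $(q-1)/2$ is even) and in the negative direction when $q=4k+3$, producing $(p^{+},n^{-})=(m(G)+1,m(G))$ and $(m(G),m(G)+1)$ respectively. When none of these special conditions hold (including all the sub-cases of ``otherwise''), either no cycle term reaches the top coefficient or it reinforces the matching term, so rank equals $2m(G)$ and $p^{+}(G)=n^{-}(G)=m(G)$.

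The main obstacle is the bookkeeping at lower indices: I must verify that the cycle contributions below the top index do not create or destroy sign changes beyond the single boundary one. This is ensured because the cycle-term sign $(-1)^{(j-q)/2+1}$ moves in lockstep with the matching-term sign at index $j$ or $j\pm 1$ (with a fixed twist controlled by $q\bmod 4$), so the matching coefficient and cycle coefficient combine monotonically rather than flip, using that matching numbers satisfy $m_k(G)\geq m_k(G-C_q)$. Once this is established, Descartes' rule (Lemma \ref{L21}) directly delivers the four cases of the lemma.
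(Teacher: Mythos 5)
Your overall strategy---expanding the coefficients of $\phi(G,\lambda)$ via Lemma \ref{L23} (for an undirected unicyclic graph every real elementary subgraph is a matching or $C_q$ together with a matching of $G-C_q$, and all signatures vanish) and then reading off $p^{+}$ and $n^{-}$ from sign changes via Descartes' rule (Lemma \ref{L21})---is exactly the method this paper uses for the mixed generalization (Lemmas \ref{L32}--\ref{L34} and Theorem \ref{UNI}); the paper itself only cites Lemma \ref{L24} from Daugherty. However, there is a genuine gap at the step you yourself flag as ``the main obstacle.'' When $q\equiv 0\pmod 4$ the two contributions to $a_{2i}$ have opposite signs, so the inequality $m_i(G)\ge 2\,m_{i-q/2}(G-C_q)$ only tells you that each even coefficient is either $0$ or of sign $(-1)^i$. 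You must additionally rule out that some intermediate coefficient $a_{2i}$ with $q/2\le i<k$ vanishes while $a_{2k}\neq 0$: an intermediate zero lowers the number of sign changes, and Descartes' rule would then return a value of $p^{+}(G)$ (and $n^{-}(G)$) strictly smaller than claimed. Your ``lockstep/monotone'' sentence asserts this non-vanishing but does not prove it, and the inequality you invoke there, $m_k(G)\ge m_k(G-C_q)$, is not the relevant one.

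What is needed is precisely the combinatorial content of the paper's Lemma \ref{L33} (Daugherty's corresponding lemma in the undirected case): (a) if some $i$-matching uses an edge incident to the cycle then so does some $(i-1)$-matching, so the strict inequality $m_i(G)>2\,m_{i-q/2}(G-C_q)$ propagates downward; (b) if no maximum matching uses an edge incident to the cycle and $G\neq C_q$, an exchange argument (take $e=uv$ with $u$ on the cycle, the matched cycle edge $f$ at $u$---which exists because the cycle part of a maximum matching avoiding $\mathscr{F}(G)$ must be a perfect matching of $C_q$---and a matched edge $g$ at $v$, then pass to $M-f-g+e$) produces an $(m(G)-1)$-matching that does use such an edge, giving $a_{2i}\neq 0$ for all $i\le m(G)-1$; and (c) the case $G=C_{4k}$ must be computed separately ($a_q=0$, all lower even coefficients nonzero). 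Without (a)--(c) your conclusion $p^{+}=n^{-}=m(G)-1$ in the first case, and $p^{+}=n^{-}=m(G)$ in the ``otherwise'' subcase with $q\equiv 0\pmod 4$, is not yet justified. The rest of your outline (the $q\equiv 2\pmod 4$ reinforcement, the top odd coefficient $a_{2m(G)+1}$ with sign governed by $(q-1)/2\bmod 2$ under the hypothesis $m(G)=m(G-C_q)+\frac{q-1}{2}$, the vanishing of odd coefficients above the top nonzero even index as in Lemma \ref{L34}, and the irrelevance of sandwiched odd coefficients) is essentially the paper's argument and is sound.
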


\begin{lemma} \label{L25}{\rm\cite{YBJ}} Let $\widetilde{C_{n}}$ be a mixed cycle with $n$ vertices. Then
$$rank(\widetilde{C_{n}})=\left\{
             \begin{array}{ll}
               n-1, & \hbox{if $n$ is odd, $\sigma(\widetilde{C_{n}})$ is odd;} \\
               n, & \hbox{if $n$ is odd, $\sigma(\widetilde{C_{n}})$ is even;} \\
               n, & \hbox{if $n$ is even, $\sigma(\widetilde{C_{n}})$ is odd;} \\
               n, & \hbox{if $n$ is even, $n+\sigma(\widetilde{C_{n}}) \equiv 2 \ (\rm{mod} \ 4)$;} \\
               n-2, & \hbox{if $n$ is even, $n+\sigma(\widetilde{C_{n}}) \equiv 0 \ (\rm{mod} \ 4)$.}
             \end{array}
           \right.
$$
\end{lemma}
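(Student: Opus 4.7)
The plan is to diagonalize $H(\widetilde{C_{n}})$ explicitly via a gauge (diagonal unitary) reduction to a standard cyclic form, and then count zero eigenvalues by a short congruence analysis.

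First, I would perform a gauge reduction. Label the vertices $v_{1},\ldots,v_{n}$ cyclically, with Hermitian entries $h_{j,j+1}$ each lying in $\{1,\mathbf{i},-\mathbf{i}\}$. The value of the cycle is $\alpha:=h_{12}h_{23}\cdots h_{n-1,n}h_{n,1}=\mathbf{i}^{\,f-b}$, where $f,b$ are the numbers of forward and backward arcs. Choose a diagonal unitary $D=\mathrm{diag}(e^{\mathbf{i}\theta_{1}},\ldots,e^{\mathbf{i}\theta_{n}})$ inductively along the path $v_{1}v_{2}\cdots v_{n}$ so that $D^{*}H(\widetilde{C_{n}})D$ has $(j,j+1)$-entry equal to $1$ for $1\le j\le n-1$ and $(n,1)$-entry equal to $\alpha$. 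Since the similarity is unitary, the spectrum is preserved, and we may assume $H$ is in this standard form.

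Second, I would recognize the standard form as $H=S+S^{*}$, where $Se_{j+1}=e_{j}$ for $1\le j\le n-1$ and $Se_{1}=\alpha e_{n}$. A short check gives $S^{n}=\alpha I$, so the eigenvalues of $S$ are the $n$-th roots of $\alpha=e^{\mathbf{i}\theta}$, namely $e^{\mathbf{i}(\theta+2\pi k)/n}$ for $k=0,1,\ldots,n-1$. Consequently, the eigenvalues of $H(\widetilde{C_{n}})$ are
\[
\lambda_{k}\;=\;2\cos\!\left(\frac{\theta+2\pi k}{n}\right),\qquad k=0,1,\ldots,n-1.
\]
Setting $m:=f-b$ so that $\theta=\pi m/2$, the condition $\lambda_{k}=0$ reduces to the congruence $4k\equiv n-m\pmod{2n}$, and $\mathrm{rank}(\widetilde{C_{n}})=n-\eta(\widetilde{C_{n}})$ equals $n$ minus the number of solutions $k\in\{0,1,\ldots,n-1\}$.

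Third, a parity analysis on $n$ and $\sigma=|m|$ yields the five cases. When $n-m$ is odd (either $n$ odd with $\sigma$ even, or $n$ even with $\sigma$ odd) the congruence has no solution, so the rank is $n$. When $n$ is odd and $\sigma$ is odd, $n-m=2t$ reduces the congruence to $2k\equiv t\pmod{n}$, which has a unique solution in $\{0,\ldots,n-1\}$ since $\gcd(2,n)=1$, giving rank $n-1$. When $n$ and $\sigma$ are both even, writing $n=2n'$, $m=2m'$ the congruence becomes $2k\equiv n'-m'\pmod{2n'}$; this is solvable iff $n'-m'$ is even, i.e.\ iff $n+\sigma\equiv 0\pmod 4$, in which case there are exactly two solutions in $\{0,\ldots,n-1\}$ (so rank $n-2$), and otherwise none (rank $n$). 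This exhausts the five cases in the statement.

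The main obstacle is the third step: one has to track carefully the distinction between solutions of $4k\equiv n-m\pmod{2n}$ as residues modulo $2n$ versus their representatives in $\{0,\ldots,n-1\}$, so that the correct nullity ($0$, $1$, or $2$) is extracted in each parity class. The gauge reduction and the identification of the spectrum of $S+S^{*}$ are routine once the twisted shift $S$ is chosen.
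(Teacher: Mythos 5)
Your proposal is correct, but it is not comparable to any argument in the paper: the paper does not prove Lemma \ref{L25} at all, it simply quotes it from \cite{YBJ}. Your gauge-plus-circulant computation is therefore a self-contained derivation, and it checks out. The diagonal-unitary reduction is legitimate because conjugation by $D$ changes each entry $h_{jk}$ to $e^{-\mathbf{i}\theta_j}h_{jk}e^{\mathbf{i}\theta_k}$ while leaving the cycle value $\alpha=\mathbf{i}^{\,f-b}$ invariant, so after normalizing the path edges to $1$ the last entry must indeed be $\alpha$. The twisted shift $S$ satisfies $S^n=\alpha I$, and each $n$-th root $\zeta$ of $\alpha$ has the explicit eigenvector $(1,\zeta,\zeta^{2},\dots,\zeta^{n-1})^{T}$, so the full spectrum of $H=S+S^{*}=S+S^{-1}$ is $\lambda_k=2\cos((\theta+2\pi k)/n)$, $k=0,\dots,n-1$, counted with multiplicity; hence counting zeros among the $\lambda_k$ does give the nullity. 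The congruence bookkeeping is also right: $\lambda_k=0$ iff $4k\equiv n-m\pmod{2n}$ with $m=f-b$, which has no solution when $n-m$ is odd, a unique solution in $\{0,\dots,n-1\}$ when $n$ is odd and $m$ is odd (invertibility of $2$ mod $n$), and, for $n=2n'$, $m=2m'$, exactly two or zero solutions according to the parity of $n'-m'$. One small point worth making explicit: the statement is in terms of $\sigma=|m|$, not $m$, but since $\sigma$ is even in the relevant cases, $n+m\equiv n+\sigma\pmod 4$ (they differ by $2\sigma\equiv 0$), so your translation of the solvability condition into $n+\sigma\equiv 0\pmod 4$ is valid. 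This route is essentially the standard switching-equivalence computation of mixed-cycle spectra in the literature (Guo--Mohar, Liu--Li), and it buys a complete eigenvalue list, which is more than the rank statement needs; the paper's choice to cite \cite{YBJ} avoids the computation but gives no independent verification.
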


\begin{lemma} \label{TREE}{\rm\cite{DCVE}} Let $\widetilde{G}$ be a mixed tree. Then
$p^{+}(\widetilde{G})=n^{-}(\widetilde{G})=m(\widetilde{G})$.
\end{lemma}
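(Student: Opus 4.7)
The plan is to combine the coefficient formula of Lemma~\ref{L23} with Descartes' rule of signs (Lemma~\ref{L21}) to read off the signs of the eigenvalues directly from matching counts. First I specialize Lemma~\ref{L23} to a mixed tree $\widetilde{G}$. Since $\widetilde{G}$ contains no cycles, every real elementary subgraph $B$ of $\widetilde{G}$ is a disjoint union of edges, so $c(B)=0$ and $\sigma(B)=0$; moreover, if $B$ has $j$ vertices then $j=2k$ is even and $\omega(B)=k$ (one component per matched edge). Hence $\mathscr{B}_{2k}$ is in bijection with the set of $k$-matchings of $\widetilde{G}$, while $\mathscr{B}_{2k+1}=\emptyset$. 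Writing $m_{k}(\widetilde{G})$ for the number of $k$-matchings (with $m_{0}(\widetilde{G})=1$), Lemma~\ref{L23} collapses to
$$\phi(\widetilde{G},\lambda)=\sum_{k=0}^{m(\widetilde{G})}(-1)^{k}\,m_{k}(\widetilde{G})\,\lambda^{n-2k},$$
and every coefficient of an odd power of $\lambda$ vanishes.

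Next I exploit the observation that any subset of a matching is again a matching, so $m_{k}(\widetilde{G})\ge\binom{m(\widetilde{G})}{k}>0$ for $0\le k\le m(\widetilde{G})$. Therefore the nonzero coefficients of $\phi(\widetilde{G},\lambda)$, read in order of descending powers, form the strictly alternating sequence $1,\,-m_{1}(\widetilde{G}),\,m_{2}(\widetilde{G}),\,\dots,\,(-1)^{m(\widetilde{G})}m_{m(\widetilde{G})}(\widetilde{G})$, giving exactly $m(\widetilde{G})$ sign changes. Since $H(\widetilde{G})$ is Hermitian, all roots of $\phi(\widetilde{G},\lambda)$ are real, so Lemma~\ref{L21} applies and yields $p^{+}(\widetilde{G})=m(\widetilde{G})$.

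Finally, because $\phi(\widetilde{G},\lambda)$ contains only powers of $\lambda$ of parity congruent to $n\pmod 2$, we have $\phi(\widetilde{G},-\lambda)=(-1)^{n}\phi(\widetilde{G},\lambda)$, so the nonzero eigenvalues of $H(\widetilde{G})$ come in symmetric pairs $\{\lambda,-\lambda\}$; in particular $n^{-}(\widetilde{G})=p^{+}(\widetilde{G})=m(\widetilde{G})$. The only delicate point I anticipate is ensuring that no intermediate $m_{k}(\widetilde{G})$ with $0\le k\le m(\widetilde{G})$ vanishes, since an interior zero coefficient would shorten the sign-change count in Descartes' rule; however, the sub-matching bound $m_{k}(\widetilde{G})\ge\binom{m(\widetilde{G})}{k}$ rules this out. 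Everything else reduces to routine sign bookkeeping, so I do not expect further obstacles.
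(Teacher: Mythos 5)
Your argument is correct: for a mixed tree every real elementary subgraph is a union of edge-components, so Lemma~\ref{L23} gives $a_{2k}=(-1)^{k}m_{k}(\widetilde{G})$ and $a_{j}=0$ for odd $j$, the bound $m_{k}(\widetilde{G})\ge\binom{m(\widetilde{G})}{k}$ guarantees that no interior even coefficient vanishes, Descartes' rule (Lemma~\ref{L21}, applicable because the Hermitian matrix has only real eigenvalues) yields $p^{+}(\widetilde{G})=m(\widetilde{G})$, and the parity of the surviving powers forces $\phi(\widetilde{G},-\lambda)=(-1)^{n}\phi(\widetilde{G},\lambda)$, hence $n^{-}(\widetilde{G})=p^{+}(\widetilde{G})$. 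Note, however, that the paper does not prove this lemma at all: it is quoted from the reference \cite{DCVE}, where the corresponding statement is the classical matching-polynomial fact for undirected trees. What your write-up adds is precisely the bridge that the citation leaves implicit, namely that the Hermitian characteristic polynomial of a mixed tree coincides with the matching-type expansion (equivalently, with the characteristic polynomial of the underlying tree), which you obtain from Lemma~\ref{L23}; your sign bookkeeping then reproduces the classical argument in the mixed setting, using exactly the machinery the paper deploys for Theorem~\ref{T31} and Lemmas~\ref{L32}--\ref{L34}. A shorter alternative, more in the spirit of the paper's later proofs, is induction: if the tree has an edge, delete a pendant vertex together with its quasi-pendant neighbour; Lemma~\ref{L29} drops $p^{+}$ and $n^{-}$ by one, the matching number also drops by one, and Lemma~\ref{COMPON} handles the resulting forest, so the claim follows with no generating-function analysis. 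Either route is sound; yours has the advantage of also exhibiting the full coefficient structure of $\phi(\widetilde{G},\lambda)$, which is reused later in the paper.
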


\begin{lemma} \label{COMPON}
Let $\widetilde{G}$ be a mixed graph. If $\widetilde{G_{1}}, \widetilde{G_{2}}, \cdots, \widetilde{G_{k}}$
are connected components of $\widetilde{G}$. Then $p^{+}(\widetilde{G})=\sum_{i=1}^{k}p^{+}(\widetilde{G_{i}})$ and $n^{-}(\widetilde{G})=\sum_{i=1}^{k}n^{-}(\widetilde{G_{i}})$.
\end{lemma}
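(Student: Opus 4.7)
The plan is to reduce the statement to a standard fact about block-diagonal Hermitian matrices. After re-labeling the vertices so that the vertices of $\widetilde{G_1}$ come first, then those of $\widetilde{G_2}$, and so on, the Hermitian-adjacency matrix $H(\widetilde{G})$ takes block-diagonal form
\[
H(\widetilde{G}) \;=\; \begin{pmatrix} H(\widetilde{G_1}) & & & \\ & H(\widetilde{G_2}) & & \\ & & \ddots & \\ & & & H(\widetilde{G_k}) \end{pmatrix},
\]
because there are no edges (and hence no non-zero entries of $H$) between distinct connected components. This uses only the definition of $H$ and the definition of a connected component.

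From this block structure, the characteristic polynomial factors as $\phi(\widetilde{G},\lambda) = \prod_{i=1}^{k} \phi(\widetilde{G_i},\lambda)$, and therefore the multiset of eigenvalues of $H(\widetilde{G})$ is the disjoint union (with multiplicities) of the eigenvalue multisets of the blocks $H(\widetilde{G_i})$. Counting positive eigenvalues on each side yields $p^+(\widetilde{G}) = \sum_{i=1}^{k} p^+(\widetilde{G_i})$, and counting negative eigenvalues yields $n^-(\widetilde{G}) = \sum_{i=1}^{k} n^-(\widetilde{G_i})$.

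There is essentially no obstacle: the only thing to verify carefully is that reordering vertices corresponds to conjugating $H(\widetilde{G})$ by a permutation matrix, which is unitary and therefore preserves the spectrum (and in particular the inertia, by Sylvester's law of inertia, although for a unitary conjugation this is immediate). The proof is thus a one-paragraph observation, and I would present it in exactly that form.
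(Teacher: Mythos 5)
Your proposal is correct and follows essentially the same route as the paper: the paper's proof also rests on the factorization $\phi(\widetilde{G},\lambda)=\prod_{i=1}^{k}\phi(\widetilde{G_i},\lambda)$, which is exactly the block-diagonal observation you spell out. Your version merely makes explicit the vertex relabeling and permutation conjugation that the paper leaves as ``it can be checked.''
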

\begin{proof}
It can be checked that $\phi(\widetilde{G}, \lambda)=\prod_{i=1}^{k}\phi(\widetilde{G_{i}}, \lambda)$. Thus, we have the results.
\end{proof}

\begin{lemma}\label{DV}
Let $\widetilde{G}$ be a  mixed graph with a vertex $u$. Then
$$p^{+}(\widetilde{G})-1 \leq p^{+}(\widetilde{G}-u) \leq p^{+}(\widetilde{G}), n^{-}(\widetilde{G})-1 \leq n^{-}(\widetilde{G}-u) \leq n^{-}(\widetilde{G}). $$
Moreover,
$$rank(\widetilde{G})-2 \leq rank(\widetilde{G}-u) \leq rank(\widetilde{G}).$$
\end{lemma}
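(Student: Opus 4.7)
The plan is to deduce everything from Cauchy interlacing (Lemma \ref{L22}) applied with $t=1$. The key observation is that the Hermitian-adjacency matrix $H(\widetilde{G}-u)$ is precisely the principal submatrix of $H(\widetilde{G})$ obtained by deleting the row and column indexed by $u$. So if I denote the eigenvalues of $H(\widetilde{G})$ by $\lambda_1 \geq \lambda_2 \geq \cdots \geq \lambda_n$ and those of $H(\widetilde{G}-u)$ by $\mu_1 \geq \mu_2 \geq \cdots \geq \mu_{n-1}$, then Lemma \ref{L22} gives $\lambda_s \geq \mu_s \geq \lambda_{s+1}$ for every $s = 1, 2, \ldots, n-1$.

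First I would derive the upper bound on $p^{+}(\widetilde{G}-u)$: any positive $\mu_s$ forces $\lambda_s \geq \mu_s > 0$, so the number of positive eigenvalues of $H(\widetilde{G}-u)$ cannot exceed the number of positive eigenvalues of $H(\widetilde{G})$, giving $p^{+}(\widetilde{G}-u) \leq p^{+}(\widetilde{G})$. For the lower bound, set $k = p^{+}(\widetilde{G})$, so $\lambda_1, \ldots, \lambda_k > 0$. Applying $\mu_s \geq \lambda_{s+1}$ for $s = 1, \ldots, k-1$ produces $\mu_1, \ldots, \mu_{k-1} > 0$, hence $p^{+}(\widetilde{G}-u) \geq k-1 = p^{+}(\widetilde{G})-1$.

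The analogous argument for negative eigenvalues is symmetric: the inequality $\mu_s \geq \lambda_{s+1}$ shows that a negative $\mu_s$ implies $\lambda_{s+1} \leq \mu_s < 0$, giving $n^{-}(\widetilde{G}-u) \leq n^{-}(\widetilde{G})$, and the inequality $\lambda_s \geq \mu_s$ combined with indexing the last $n^{-}(\widetilde{G})$ eigenvalues yields $n^{-}(\widetilde{G}-u) \geq n^{-}(\widetilde{G})-1$. Finally, using $rank(\widetilde{G}) = p^{+}(\widetilde{G}) + n^{-}(\widetilde{G})$ and the analogous identity for $\widetilde{G}-u$, adding the two pairs of inequalities immediately gives $rank(\widetilde{G})-2 \leq rank(\widetilde{G}-u) \leq rank(\widetilde{G})$.

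There is no real obstacle here; the proof is a direct bookkeeping exercise on top of the Cauchy interlacing inequality. The only point worth being careful about is the correct index shift in the interlacing (so that one does not mistakenly lose or gain an eigenvalue when counting positives versus negatives), which is handled cleanly by writing out $\lambda_s \geq \mu_s \geq \lambda_{s+1}$ and inspecting the two ends separately.
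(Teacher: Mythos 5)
Your proof is correct and follows the same route as the paper: the paper simply invokes the interlacing property of Lemma \ref{L22} for the principal submatrix $H(\widetilde{G}-u)$ of $H(\widetilde{G})$ and states that the result follows, while you carry out the eigenvalue counting explicitly. The index bookkeeping you give (using $\lambda_s \geq \mu_s \geq \lambda_{s+1}$ for the positive and negative counts, then summing for the rank) is exactly the detail the paper leaves implicit.
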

\begin{proof}
By Lemma \ref{L22}, the eigenvalues of the Hermitian-adjacent matrix of $\widetilde{G}$ interlace those of $\widetilde{G}-u$. The result follows.
\end{proof}

\begin{lemma}\label{L29} Let $\widetilde{G}$ be a mixed graph with a pendant vertex $u$ and a quasi-pendant vertex $v$ which is
adjacent to $u$. Let $\widetilde{G'}=\widetilde{G}-\{u, v\}$, then
$$p^{+}(\widetilde{G'})=p^{+}(\widetilde{G})-1, n^{-}(\widetilde{G'})=n^{-}(\widetilde{G})-1.$$
\end{lemma}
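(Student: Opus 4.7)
The plan is to perform a $*$-congruence on $H(\widetilde{G})$ that exhibits the pendant edge $uv$ as a $2\times 2$ block of inertia $(1,1)$, with the remaining block being exactly $H(\widetilde{G'})$. Since $*$-congruence by an invertible matrix preserves inertia by Sylvester's law, this will directly yield $p^{+}(\widetilde{G}) = p^{+}(\widetilde{G'}) + 1$ and $n^{-}(\widetilde{G}) = n^{-}(\widetilde{G'}) + 1$ in one stroke.

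First I would order the vertices of $\widetilde{G}$ so that $v_1 = u$, $v_2 = v$, and $v_3, \ldots, v_n$ are the vertices of $\widetilde{G'}$. Set $h := h_{12}$; because $u$ is pendant with unique neighbor $v$, the first row of $H := H(\widetilde{G})$ has its sole nonzero entry $h$ (with $|h| = 1$) at column $2$, and by Hermiticity the first column has $\overline{h}$ at row $2$ and zeros elsewhere. This sparsity pattern is the key structural input supplied by the pendant/quasi-pendant hypothesis.

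Next, for each $j \in \{3, \ldots, n\}$ I would simultaneously add $c_j := -h_{j2}/h$ times row $1$ to row $j$, and add $\overline{c_j}$ times column $1$ to column $j$. This paired operation is a $*$-congruence by an elementary matrix, hence preserves both the Hermitian property and the inertia. Because row $1$ and column $1$ are supported only at the single off-diagonal position corresponding to $v$, the only entry affected in row $j$ (resp.\ column $j$) is the one at position $(j,2)$ (resp.\ $(2,j)$); a direct check makes the new $(j,2)$ entry equal to $h_{j2} + c_j h = 0$, and Hermiticity propagates this to $(2,j)$. The operations for distinct indices $j$ do not interact, since no row or column of index $\geq 3$ is ever used as a pivot, and hence the entries $h_{kl}$ for $k,l \geq 3$ are untouched throughout.

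After performing the paired operation for every $j \geq 3$, the resulting matrix has the block form
$$
P^{*} H P \;=\; \begin{pmatrix} 0 & h & 0 \\ \overline{h} & 0 & 0 \\ 0 & 0 & H(\widetilde{G'}) \end{pmatrix},
$$
where $P$ is the product of the corresponding elementary $*$-congruence matrices. The leading $2 \times 2$ block has eigenvalues $\pm 1$, contributing exactly one positive and one negative eigenvalue, while the remaining block is $H(\widetilde{G'})$ by construction. Sylvester's law of inertia then delivers both claimed equalities simultaneously. There is no substantive obstacle; the only step requiring care is the bookkeeping verification that the simultaneous clearing operations do not perturb the submatrix indexed by $\{v_3, \ldots, v_n\}$, which follows immediately from the sparsity of rows $1$ and $2$ in the pendant configuration.
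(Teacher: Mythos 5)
Your proposal is correct and follows essentially the same route as the paper: the paper also writes $H(\widetilde{G})$ with $u,v$ first and asserts a $*$-congruence clearing the entries between $v$ and $\widetilde{G'}$, leaving the $2\times 2$ pendant-edge block plus $H(\widetilde{G'})$, whence Sylvester's law gives both equalities. Your write-up merely makes explicit the elementary congruence operations (using the pendant row as pivot) that the paper summarizes with ``it can be checked.''
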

\begin{proof} Let $H(\widetilde{G})$ and $H(\widetilde{G'})$ be the Hermitian adjacency matrix of $\widetilde{G}$ and $\widetilde{G'}$, respectively.
Then, $H(\widetilde{G})$ can be written as
$$\left(
  \begin{array}{ccc}
    0 & \alpha & \mathbf{0} \\
    \alpha^{*} & 0 & \boldsymbol{\beta} \\
    \mathbf{0}^{T} & \boldsymbol{\beta}^{T} & H(\widetilde{G'}) \\
  \end{array}
\right).
$$
where $\alpha \in \{0, 1, \mathbf{\rm{i}}, \mathbf{\rm{-i}}  \}$. It can be checked that $H(\widetilde{G})$ is congruent to the following type matrix:
$$\left(
  \begin{array}{ccc}
    0 & \alpha & \mathbf{0} \\
    \alpha^{*} & 0 & \mathbf{0} \\
    \mathbf{0}^{T} & \mathbf{0}^{T} & H(\widetilde{G'}) \\
  \end{array}
\right).
$$

The result follows.
\end{proof}

\section{The positive and negative inertia index of the mixed graphs}

In this section, we study the positive and negative inertia index of the mixed cycle with $n$ vertices
firstly in Theorem \ref{T31}. In what follows, we present the positive and negative inertia index of the mixed unicyclic graph in Theorem \ref{UNI} by the results of Lemmas \ref{L32}-\ref{L34}.

\begin{theorem}\label{T31}  Let $\widetilde{C_{n}}$ be a mixed cycle with $n$ vertices. Then
$$  (p^{+}(\widetilde{C_{n}}), n^{-}(\widetilde{C_{n}})) = \left\{
  \begin{array}{ll}
      (\frac{n}{2}, \frac{n}{2}), & \hbox{ if $n$ is even, $\sigma(\widetilde{C_{n}})$ is odd;} \\
      (\frac{n}{2}, \frac{n}{2}), & \hbox{ if $n$ is even, $\sigma(\widetilde{C_{n}})$ is even and $n+\sigma(\widetilde{C_{n}}) \equiv 2 \ (\rm{mod} \ 4)$;} \\
      (\frac{n-2}{2}, \frac{n-2}{2}), & \hbox{ if $n$ is even, $\sigma(\widetilde{C_{n}})$ is even and $n+\sigma(\widetilde{C_{n}}) \equiv 0 \ (\rm{mod} \ 4)$;} \\
      (\frac{n-1}{2}, \frac{n-1}{2}), & \hbox{ if $n$ is odd, $\sigma(\widetilde{C_{n}})$ is odd;} \\
      (\frac{n+1}{2}, \frac{n-1}{2}), & \hbox{ if $n \equiv 1 \ (\rm{mod} \ 4)$, $\sigma(\widetilde{C_{n}}) \equiv 0 \ (\rm{mod} \ 4)$;} \\
      (\frac{n-1}{2}, \frac{n+1}{2}), & \hbox{ if $n \equiv 3 \ (\rm{mod} \ 4)$, $\sigma(\widetilde{C_{n}}) \equiv 0 \ (\rm{mod} \ 4)$;} \\
      (\frac{n-1}{2}, \frac{n+1}{2}), & \hbox{ if $n \equiv 1 \ (\rm{mod} \ 4)$, $\sigma(\widetilde{C_{n}}) \equiv 2 \ (\rm{mod} \ 4)$;} \\
      (\frac{n+1}{2}, \frac{n-1}{2}), & \hbox{ if $n \equiv 3 \ (\rm{mod} \ 4)$, $\sigma(\widetilde{C_{n}}) \equiv 2 \ (\rm{mod} \ 4)$.}
    \end{array}
  \right.
$$
\end{theorem}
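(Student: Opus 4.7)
My plan is to compute the characteristic polynomial $\phi(\widetilde{C_{n}},\lambda)$ explicitly via Lemma~\ref{L23}, apply Descartes' sign rule (Lemma~\ref{L21}) to extract $p^{+}(\widetilde{C_{n}})$, and then recover $n^{-}(\widetilde{C_{n}}) = rank(\widetilde{C_{n}}) - p^{+}(\widetilde{C_{n}})$ from Lemma~\ref{L25}. This is legitimate because $H(\widetilde{C_{n}})$ is Hermitian, so $\phi(\widetilde{C_{n}},\lambda)$ has only real roots and Descartes' rule becomes an equality rather than an inequality.

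First I would enumerate the real elementary subgraphs of $\widetilde{C_{n}}$. The only cycle that can occur in an induced subgraph of $\widetilde{C_{n}}$ is $\widetilde{C_{n}}$ itself, and it qualifies as a real elementary subgraph exactly when $\sigma(\widetilde{C_{n}})$ is even. Hence every real elementary subgraph of $\widetilde{C_{n}}$ is either a $k$-matching of $C_{n}$, contributing $(-1)^{k}$ to $a_{2k}$, or the whole cycle (only when $\sigma$ is even), contributing $-2(-1)^{\sigma/2}$ to $a_{n}$. Consequently $a_{j} = 0$ for every odd $j<n$, while for $2k<n$ the coefficient $a_{2k}$ equals $(-1)^{k}$ times the number of $k$-matchings of $C_{n}$; in particular, the nonzero coefficients alternate in sign all the way through degree $\lambda^{n-2}$ (when $n$ is even) or $\lambda$ (when $n$ is odd).

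The crux is therefore the sign, or vanishing, of $a_{n}$. When $n$ is odd, $C_{n}$ has no perfect matching, so $a_{n} = -2(-1)^{\sigma/2}$ if $\sigma$ is even and $a_{n} = 0$ if $\sigma$ is odd. When $n$ is even, the matching contribution $2(-1)^{n/2}$ must be added to the cycle contribution $-2(-1)^{\sigma/2}$: these reinforce into $4(-1)^{n/2}$ when $n+\sigma \equiv 2 \pmod{4}$ and cancel to $0$ when $n+\sigma \equiv 0 \pmod{4}$. In each of the eight subcases I would then count the sign changes of the coefficient sequence, reading off $p^{+}(\widetilde{C_{n}})$, and invoke Lemma~\ref{L25} to produce $n^{-}(\widetilde{C_{n}})$.

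I expect the only obstacle to be bookkeeping: one has to simultaneously track the parities of $n$, of $n/2$ or $(n-1)/2$, and of $\sigma/2$, and verify that the single ``tail'' sign change at $a_{n}$ is introduced or suppressed exactly as needed to produce the $\pm 1$ discrepancy between $p^{+}$ and $n^{-}$ in the odd-$n$ rows, and to account for the rank drop when $n+\sigma \equiv 0 \pmod{4}$ in the even-$n$ rows. No conceptually new idea beyond Lemmas~\ref{L21}, \ref{L23} and \ref{L25} appears to be needed.
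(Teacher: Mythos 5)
Your proposal is correct: the enumeration of real elementary subgraphs of $\widetilde{C_{n}}$ is right (a $k$-matching contributes $(-1)^{k}$, and the whole cycle, admissible only when $\sigma(\widetilde{C_{n}})$ is even, contributes $(-1)^{\frac{1}{2}\sigma+1}\cdot 2=-2(-1)^{\sigma/2}$ to $a_{n}$), the resulting values $a_{n}=-2,\,+2,\,4(-1)^{n/2},\,0$ in the respective cases are what Lemma~\ref{L23} gives, and counting sign changes via Lemma~\ref{L21} together with $n^{-}=rank-p^{+}$ from Lemma~\ref{L25} does reproduce all eight rows (I checked each). Your route differs from the paper's in the odd-$n$, even-$\sigma$ cases: the paper observes that for $\sigma\equiv 0\pmod 4$ the characteristic polynomial coincides with that of the undirected cycle and imports Daugherty's result (Lemma~\ref{L24}) plus Lemma~\ref{L25}, then handles $\sigma\equiv 2\pmod 4$ by the reflection $\lambda\mapsto-\lambda$ (since only $a_{n}$ flips sign), whereas you treat all cases uniformly by Descartes' rule on the explicitly computed coefficients; for even $n$ the paper argues more briefly that vanishing odd coefficients force $p^{+}=n^{-}$ and then uses the rank. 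What your approach buys is self-containedness (no appeal to the undirected unicyclic Lemma~\ref{L24}) and uniformity — it is essentially the specialization to $\widetilde{C_{n}}$ of the sign-change machinery the paper itself develops afterwards in Lemmas~\ref{L32}--\ref{L34} for general mixed unicyclic graphs; what the paper's proof buys is brevity, by offloading the odd-$n$ computation to a known result. One bookkeeping point to make explicit when you write it up: for $2k<n$ every $a_{2k}=(-1)^{k}m_{k}(C_{n})$ is nonzero because $m_{k}(C_{n})>0$ for all $k\le\lfloor n/2\rfloor$, so the alternation you rely on never stalls before the tail coefficient.
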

\begin{proof}
Let $\phi(\widetilde{C_{n}}, \lambda)=\sum_{i=0}^{n}a_{i}\lambda^{n-i}$ be the characteristic polynomial of the Hermitian-adjacent matrix of $\widetilde{C_{n}}$.

If $n$ is even.  It is obviously that there is no real elementary subgraph with odd order. By Lemma~\ref{L23}, $a_{j}=0$ if $j$ is odd, then $p^{+}(\widetilde{C_{n}})= n^{-}(\widetilde{C_{n}})$. By Lemma~\ref{L25}, we have the desired results when $n$ is even.

If $n$ is odd and $\sigma(\widetilde{C_{n}})$ is odd. It can be checked that there is no real elementary subgraph with odd order. By Lemma~\ref{L23}, $a_{l} = 0$ when $l$ is odd and $a_{j} \neq 0$ when $j$ is even. Thus $\phi(\widetilde{C_{n}}, \lambda)=\lambda (\lambda^{n-1}+a_{2}\lambda^{n-3}+\cdots +a_{n-1})$ and $p^{+}(\widetilde{C_{n}})= n^{-}(\widetilde{C_{n}})=\frac{n-1}{2}$.

If $n$ is odd and $\sigma(\widetilde{C_{n}}) \equiv 0 \ (\rm{mod} \ 4)$.  It can be checked that
the characteristic polynomial of $H(\widetilde{C_{n}})$ is same as the characteristic polynomial of
the adjacent matrix of the undirected $n$-cycle. Moreover, $\widetilde{C_{n}}$ is the only real elementary subgraph with order $n$. By Lemma~\ref{L23}, $a_{j}=0$ for odd $j$ with $j < n$ and $a_{n}=-2$ and
$\phi(\widetilde{C_{n}}, \lambda)=\lambda(\lambda^{n-1}+a_{2}\lambda^{n-3}+\cdots +a_{n-1})-2$. Then by Lemmas~\ref{L24} and \ref{L25},  we have the results if
$n$ is odd and $\sigma(\widetilde{C_{n}}) \equiv 0 \ (\rm{mod} \ 4)$.

If $n$ is odd and $\sigma(\widetilde{C_{n}}) \equiv 2 \ (\rm{mod} \ 4)$. By similar discussion with the case that $n$ is odd and $\sigma(\widetilde{C_{n}}) \equiv 0 \ (\rm{mod} \ 4)$, it can be checked that $a_{n}=2$ and
$\phi(\widetilde{C_{n}}, \lambda)=\lambda(\lambda^{n-1}+a_{2}\lambda^{n-3}+\cdots +a_{n-1})+2$ (here the coefficient $a_{i}$ is same as the case that $n$ is odd and $\sigma(\widetilde{C_{n}}) \equiv 0 \ (\rm{mod} \ 4)$ for $i=0, 1, \cdots, n-1$). Then if $x$ is a characteristic eigenvalue of the case that $n$ is odd and $\sigma(\widetilde{C_{n}}) \equiv 0 \ (\rm{mod} \ 4)$, $-x$ is a characteristic eigenvalue of the case that $n$ is odd and $\sigma(\widetilde{C_{n}}) \equiv 2 \ (\rm{mod} \ 4)$. Thus, we have the desired results.

This completes the proof.
\end{proof}

The function sgn($x$) is the standard signum function, i.e., sgn($x)=1$ if $x$ is positive and sgn($x)=-1$ if $x$ is negative. A matching which using $i$ edges is called an $i$-matching. Let $m_{i}(\widetilde{G})$ be the number of $i$-matchings of $\widetilde{G}$. We say an edge incident to a cycle means that there is only one vertex of
this edge on the cycle.
Let $\widetilde{G}$ be a mixed graph with the characteristic polynomial of its Hermitian-adjacent matrix
$\phi(\widetilde{G}, \lambda)=\lambda^{n}+a_{1}\lambda^{n-1}+a_{2}\lambda^{n-2}+ \cdots +a_{n}.$ By Lemma
\ref{COMPON}, we only need to consider the case that $\widetilde{G}$ is connected.

\begin{lemma} \label{L32} Let $\widetilde{G}$ be a mixed unicyclic connected graph with a cycle $\widetilde{C_{q}}$.
Then for $0 \leq i \leq \lfloor \frac{n}{2}  \rfloor$, one has that
$$sgn(a_{2i})=\left\{
             \begin{array}{ll}
               (-1)^{i}, & \hbox{if $q$ is odd or $\sigma(\widetilde{C_{q}})$ is odd for $0 \leq i \leq m(\widetilde{G})$;} \\
               (-1)^{i}, & \hbox{if $q$ is even and $|\sigma(\widetilde{C_{q}})-q| \equiv 2 \ (\rm{mod} \ 4)$ for $0 \leq i \leq m(\widetilde{G})$ ;} \\
               (-1)^{i}, & \hbox{if $q$ is even and $|\sigma(\widetilde{C_{q}})-q| \equiv 0 \ (\rm{mod} \ 4)$ for $0 \leq i \leq m(\widetilde{G})$ and there exists} \\
                          & \hbox{an $i$-matching containing an edge incident to the cycle;} \\
               (-1)^{i}, & \hbox{if $q$ is even and $|\sigma(\widetilde{C_{q}})-q| \equiv 0 \ (\rm{mod} \ 4)$ for $0 \leq i \leq \frac{q}{2}$ and there exists no} \\
                          & \hbox{ $i$-matching containing an edge incident to the cycle;} \\
               0, & \hbox{otherwise.}
             \end{array}
           \right.
$$
Where $a_{2i}$ is the coefficient of $\lambda^{n-2i}$ in the characteristic polynomial
$\phi(\widetilde{G}, \lambda)$.
\end{lemma}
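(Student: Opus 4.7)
The plan is to apply Lemma~\ref{L23} and classify the real elementary subgraphs of $\widetilde{G}$ on $2i$ vertices according to whether they contain $\widetilde{C_q}$. Since $\widetilde{G}$ is unicyclic with unique cycle $\widetilde{C_q}$, any real elementary subgraph on $2i$ vertices is either (a) an $i$-matching of $\widetilde{G}$, or (b) the disjoint union of $\widetilde{C_q}$ with an $(i-q/2)$-matching of $\widetilde{G}-V(\widetilde{C_q})$. Type (b) requires $q$ even (for the vertex count), $\sigma(\widetilde{C_q})$ even (so that $\widetilde{C_q}$ itself is real elementary), and $i\ge q/2$. By Lemma~\ref{L23}, each type-(a) subgraph contributes $(-1)^{i}$ to $a_{2i}$ and each type-(b) subgraph contributes $(-1)^{\sigma(\widetilde{C_q})/2+i-q/2+1}\cdot 2=2(-1)^{i}(-1)^{(\sigma(\widetilde{C_q})-q)/2+1}$. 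Writing $n_i$ for the number of $(i-q/2)$-matchings of $\widetilde{G}-V(\widetilde{C_q})$ (with $n_i:=0$ for $i<q/2$), this reduces $a_{2i}$ to the compact form $(-1)^{i}\bigl[m_i(\widetilde{G})\pm 2n_i\bigr]$, with the internal sign determined by $(\sigma(\widetilde{C_q})-q)\bmod 4$.

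The first two sub-cases are then immediate. If $q$ is odd or $q$ is even with $\sigma(\widetilde{C_q})$ odd, type (b) is vacuous and $a_{2i}=(-1)^{i}m_i(\widetilde{G})$, giving sign $(-1)^{i}$ exactly for $0\le i\le m(\widetilde{G})$. If $q$ is even and $|\sigma(\widetilde{C_q})-q|\equiv 2\pmod 4$, the two contributions add, $a_{2i}=(-1)^{i}[m_i(\widetilde{G})+2n_i]$, a nonnegative multiple of $(-1)^{i}$ that is nonzero precisely when $m_i(\widetilde{G})>0$.

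The delicate case is $q$ even with $|\sigma(\widetilde{C_q})-q|\equiv 0\pmod 4$, where the type-(b) contribution is opposite in sign and $a_{2i}=(-1)^{i}[m_i(\widetilde{G})-2n_i]$. The key combinatorial point I would use is that $2n_i$ counts exactly the $i$-matchings of $\widetilde{G}$ that split into a perfect matching of $\widetilde{C_q}$ (two choices) together with an $(i-q/2)$-matching of $\widetilde{G}-V(\widetilde{C_q})$; such matchings use no edge incident to but not on $\widetilde{C_q}$. Therefore, whenever $\widetilde{G}$ admits an $i$-matching using an edge incident to the cycle, $m_i(\widetilde{G})>2n_i$ and $\mathrm{sgn}(a_{2i})=(-1)^{i}$; while for $i<q/2$ we have $n_i=0$, so the same sign is obtained whenever $m_i(\widetilde{G})>0$.

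The main obstacle is the remaining situation: $i\ge q/2$ with no incident-edge $i$-matching, where one must show $m_i(\widetilde{G})=2n_i$ so that $a_{2i}=0$ and we land in the ``otherwise'' case. I would prove this by an augmenting swap. If $M$ is an $i$-matching with $|M\cap E(\widetilde{C_q})|<q/2$, then some cycle vertex $v$ is unmatched in $M$. Were $v$ to have an off-cycle neighbour $u$, then either $u$ is also unmatched in $M$ (and removing any edge of $M$ while inserting $vu$ yields an $i$-matching using the incident edge $vu$), or $u$ is matched via some off-cycle edge $uw\in M$ (and $(M\setminus\{uw\})\cup\{vu\}$ is an $i$-matching using $vu$); both alternatives contradict the hypothesis. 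Hence every unmatched cycle vertex has only cycle neighbours, and a short additional argument bounding the matching number of $\widetilde{G}-V(\widetilde{C_q})$ through single-incident-edge extensions forces every $i$-matching to contain a perfect matching of $\widetilde{C_q}$, giving $m_i(\widetilde{G})=2n_i$ as required.
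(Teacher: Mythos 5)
Your overall route is the same as the paper's: you classify the real elementary subgraphs on $2i$ vertices via Lemma~\ref{L23} into $i$-matchings and $\widetilde{C_{q}}$ together with an $(i-q/2)$-matching of $\widetilde{G}-V(\widetilde{C_{q}})$, and you arrive at the same identity $a_{2i}=(-1)^{i}\bigl[m_{i}(\widetilde{G})+2m_{i-q/2}(\widetilde{G}-V(\widetilde{C_{q}}))(-1)^{|\sigma(\widetilde{C_{q}})-q|/2+1}\bigr]$, from which the routine cases ($q$ odd, $\sigma(\widetilde{C_{q}})$ odd, $|\sigma(\widetilde{C_{q}})-q|\equiv 2\pmod 4$, the existence of an incident $i$-matching, and $i<q/2$) follow exactly as in the paper. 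The only genuinely nontrivial point is the claim that when $|\sigma(\widetilde{C_{q}})-q|\equiv 0\pmod 4$, $i\ge q/2$ and no $i$-matching uses an edge incident to the cycle, one has $m_{i}(\widetilde{G})=2m_{i-q/2}(\widetilde{G}-V(\widetilde{C_{q}}))$; the paper merely asserts this (``$m_{i}>2m_{i-q/2}$ only when \dots''), and you attempt to prove it.

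That attempt has a gap at precisely the hard spot. Your swap only handles an $M$-unmatched cycle vertex that itself has an off-cycle neighbour; the remaining case, where every unmatched cycle vertex has only cycle neighbours while the trees of $\widetilde{G}$ hang from matched cycle vertices, is the whole difficulty, and the phrase ``a short additional argument bounding the matching number of $\widetilde{G}-V(\widetilde{C_{q}})$ through single-incident-edge extensions'' is not an argument but a restatement of what must be shown. What closes it is a rerouting of the cycle part of $M$: if $\widetilde{G}\neq\widetilde{C_{q}}$, fix an incident edge $uv$ with $u\in V(\widetilde{C_{q}})$; since $M$ contains no incident edge and its cycle part is not perfect, $|M\cap E(\widetilde{C_{q}})|\le q/2-1$, which is the matching number of the path $\widetilde{C_{q}}-u$ on $q-1$ vertices, so $M\cap E(\widetilde{C_{q}})$ may be replaced by an equal-sized matching of $\widetilde{C_{q}}-u$, giving an $i$-matching $M'$ that misses $u$; then $M'-vw+uv$ (if $v$ is matched by an off-cycle edge $vw$) or $M'\cup\{uv\}$ minus any other edge (if $v$ is unmatched) is an $i$-matching containing an incident edge, contradicting the hypothesis. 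Hence every $i$-matching restricts to a perfect matching of $\widetilde{C_{q}}$, so $m_{i}=2m_{i-q/2}$ and $a_{2i}=0$; when $\widetilde{G}=\widetilde{C_{q}}$ the count is immediate. (Note that at $i=q/2$ the hypothesis forces $\widetilde{G}=\widetilde{C_{q}}$ and hence $a_{q}=0$, consistent with the computation in the proof of Lemma~\ref{L33}; so the fourth line of the lemma's display is accurate only for $i<q/2$, and your conclusion, not the literal statement, is the correct one there.) With the rerouting step supplied, your write-up is complete and indeed more detailed than the paper's at this point.
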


\begin{proof} It is obviously that $a_{0}=1$, which is equal to the stated result $(-1)^{0}$ for any value of $q$ as $m(\widetilde{G}) \geq 0$.

If $q$ is odd. Then any real elementary subgraph on an even number of vertices consists only of copies of $K_{2}$.
Such a matching of $2i$ vertices exists only if $i \leq m(\widetilde{G})$. Therefore, for $i > m(\widetilde{G})$, no real elementary subgraph
 exists, it implies that sgn$(a_{2i})=0$. Otherwise, each real elementary subgraph contributes $(-1)^{i}$ to the sum of $a_{2i}$, so we have sgn$(a_{2i})=(-1)^{i}$ for $i \leq m(\widetilde{G})$.

If $q$ is even and $i\geq 1$. Since any even cycle can be decomposed into two matchings, we again have sgn$(a_{2i})=0$ when $i > m(\widetilde{G})$. Suppose $2i < q$, then any real elementary subgraph on $2i$ vertices only contains
copies of $K_{2}$, it implies that sgn$(a_{2i})=(-1)^{i}$. Suppose $q \leq 2i \leq 2m(\widetilde{G})$. We will deal with the following two subcases according to the parity of $\sigma(\widetilde{C_{q}})$.

Subcase 1. $\sigma(\widetilde{C_{q}})$ is odd.

Then any real elementary subgraph on $2i$ vertices only contains copies of $K_{2}$, it implies that sgn$(a_{2i})=(-1)^{i}$.

Subcase 2. $\sigma(\widetilde{C_{q}})$ is even.

Then some real elementary subgraphs on $2i$ vertices contain $\widetilde{C_{q}}$ and $i-q/2$ copies of $K_{2}$ and
some real elementary subgraphs contain only $i$ copies of $K_{2}$. By Lemma \ref{L23}, one has that
\begin{eqnarray*}
a_{2i}
&=&(-1)^{2i}(m_{i-q/2}(\widetilde{G}-V(\widetilde{C_{q}})))(-1)^{\frac{1}{2}\sigma(\widetilde{C_{q}})+i-\frac{q}{2}+1}2^{1}+m_{i}(G)(-1)^{i}\\
&=&(-1)^{i}[2(m_{i-q/2}(\widetilde{G}-V(\widetilde{C_{q}})))(-1)^{\frac{|\sigma(\widetilde{C_{q}})-q|}{2}+1}+m_{i}(\widetilde{G})].
\end{eqnarray*}

Since $\sigma(\widetilde{C_{q}})$ and $q$ are even, $|\sigma(\widetilde{C_{q}})-q|$ is even.
If $|\sigma(\widetilde{C_{q}})-q| \equiv 2 \ (\rm{mod} \ 4)  $.
Then $(-1)^{\frac{|\sigma(\widetilde{C_{q}})-q|}{2}+1}=1$ and sgn$(a_{2i})=(-1)^{i}$.

If $|\sigma(\widetilde{C_{q}})-q| \equiv 0 \ (\rm{mod} \ 4)  $.
Then $a_{2i}=(-1)^{i}[m_{i}(\widetilde{G})-2(m_{i-q/2}(\widetilde{G}-V(\widetilde{C_{q}})))]$. But $m_{i}(\widetilde{G}) \geq 2(m_{i-q/2}(\widetilde{G}-V(\widetilde{C_{q}})))$ as
$2(m_{i-q/2}(\widetilde{G}-V(\widetilde{C_{q}})))$ matchings of $\widetilde{G}$ of size $i$ can be found by using the two matchings in the cycle.
Furthermore, $m_{i}(\widetilde{G}) > 2(m_{i-q/2}(\widetilde{G}-V(\widetilde{C_{q}})))$ only when there exists a matching of $\widetilde{G}$ of size $i$ that
uses an edge between $\widetilde{C_{q}}$ and $\widetilde{G}-V(\widetilde{C_{q}})$.
Then, sgn$(a_{2i})=(-1)^{i}$ if $q$ is even and $|\sigma(\widetilde{C_{q}})-q| \equiv 0 \ (\rm{mod} \ 4)$ for $0 \leq i \leq m(\widetilde{G})$ and there exists an $i$-matching containing an edge incident to the cycle.

The results follows.
\end{proof}

\begin{lemma}\label{L33} Let $\widetilde{G}$ be a mixed unicyclic connected graph with a cycle $\widetilde{C_{q}}$. Let
$$k=\left\{
             \begin{array}{ll}
               m(\widetilde{G})-1, & \hbox{if $q$ and $\sigma(\widetilde{C_{q}})$ are even and $|\sigma(\widetilde{C_{q}})-q| \equiv 0 \ (\rm{mod} \ 4)  $, and no maximum} \\
              & \hbox{matching contains an edge incident to the cycle ;} \\
               m(\widetilde{G}),  & \hbox{otherwise.}
             \end{array}
           \right.
$$
Then for $0 \leq i \leq \lfloor n/2 \rfloor$,
$$\mathrm{sgn}(a_{2i})=\left\{
             \begin{array}{ll}
               (-1)^{i}, & \hbox{if $i \leq k$;} \\
               0,  & \hbox{otherwise.}
             \end{array}
           \right.
$$

\end{lemma}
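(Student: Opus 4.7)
The strategy is to reorganize the five-branch refinement of Lemma~\ref{L32} into the clean two-branch statement, by identifying in each configuration the largest $i$ for which $a_{2i}$ is forced to be nonzero. First I would handle the trivial range $i>m(\widetilde{G})$ separately: every real elementary subgraph on $2i$ vertices is either an $i$-matching of $\widetilde{G}$, or (when applicable) the disjoint union of $\widetilde{C_{q}}$ with an $(i-q/2)$-matching of $\widetilde{G}-V(\widetilde{C_{q}})$, and both are ruled out by the definition of $m(\widetilde{G})$ together with the inequality $m(\widetilde{G}-V(\widetilde{C_{q}}))\le m(\widetilde{G})-q/2$. Thus Lemma~\ref{L23} gives $a_{2i}=0$ in this range, and it suffices to treat $i\le m(\widetilde{G})$.

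Next I would dispatch the ``easy'' branches of Lemma~\ref{L33}, those with $k=m(\widetilde{G})$. If $q$ is odd, or $\sigma(\widetilde{C_{q}})$ is odd, or $|\sigma(\widetilde{C_{q}})-q|\equiv 2\pmod 4$, Lemma~\ref{L32} directly gives $\mathrm{sgn}(a_{2i})=(-1)^{i}$ for every $0\le i\le m(\widetilde{G})$. The remaining easy sub-branch is $q,\sigma(\widetilde{C_{q}})$ even, $|\sigma(\widetilde{C_{q}})-q|\equiv 0\pmod 4$, with some maximum matching $M^{*}$ containing an edge $e$ incident to $\widetilde{C_{q}}$: for each $1\le i\le m(\widetilde{G})$, deleting any $m(\widetilde{G})-i$ edges of $M^{*}\setminus\{e\}$ leaves an $i$-matching still containing $e$, and clause (iii) of Lemma~\ref{L32} yields $(-1)^{i}$.

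The delicate branch is when $q$ and $\sigma(\widetilde{C_{q}})$ are even, $|\sigma(\widetilde{C_{q}})-q|\equiv 0\pmod 4$, and no maximum matching uses an incident edge. The key structural observation is that each such matching $M$ splits as $M_{\mathrm{cyc}}\cup M_{\mathrm{off}}$ with $|M_{\mathrm{cyc}}|\le q/2$ and $|M_{\mathrm{off}}|\le m(\widetilde{G}-V(\widetilde{C_{q}}))$; combined with the reverse inequality $m(\widetilde{G})\ge q/2+m(\widetilde{G}-V(\widetilde{C_{q}}))$, this forces $|M_{\mathrm{cyc}}|=q/2$ and $|M_{\mathrm{off}}|=m(\widetilde{G}-V(\widetilde{C_{q}}))$, i.e.\ every maximum matching uses one of the two perfect matchings of $\widetilde{C_{q}}$ together with a maximum matching of $\widetilde{G}-V(\widetilde{C_{q}})$. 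Hence $m_{m(\widetilde{G})}(\widetilde{G})=2\,m_{m(\widetilde{G})-q/2}(\widetilde{G}-V(\widetilde{C_{q}}))$, and substituting into
$$a_{2m(\widetilde{G})}=(-1)^{m(\widetilde{G})}\bigl[m_{m(\widetilde{G})}(\widetilde{G})-2\,m_{m(\widetilde{G})-q/2}(\widetilde{G}-V(\widetilde{C_{q}}))\bigr]$$
from the proof of Lemma~\ref{L32} collapses it to $0$, giving the cutoff $k=m(\widetilde{G})-1$.

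For the smaller indices $1\le i\le m(\widetilde{G})-1$ in this sub-case I still need $\mathrm{sgn}(a_{2i})=(-1)^{i}$. If $\widetilde{G}=\widetilde{C_{q}}$, one has partial cycle matchings of every size $i<q/2$, so $m_{i}(\widetilde{G})>0=2m_{i-q/2}(\emptyset)$ directly yields a nonzero coefficient of sign $(-1)^{i}$. Otherwise I pick any edge $e=vu$ incident to $\widetilde{C_{q}}$; the elementary bound $m(\widetilde{G}-\{v,u\})\ge m(\widetilde{G})-2\ge i-1$ provides an $(i-1)$-matching $M'$ of $\widetilde{G}-\{v,u\}$, and $M'\cup\{e\}$ is then an $i$-matching containing an incident edge, so clause (iii) of Lemma~\ref{L32} again gives $(-1)^{i}$. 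The main obstacle, conceptually, is this delicate branch: one has to extract enough rigidity from the hypothesis ``no maximum matching uses an incident edge'' to force $a_{2m(\widetilde{G})}=0$, while showing simultaneously that this rigidity breaks one step below the maximum so that the sign pattern $(-1)^{i}$ still persists for smaller $i$.
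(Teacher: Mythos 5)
Your proposal is correct, and its skeleton (reduce everything to Lemma~\ref{L32} and the coefficient identity $a_{2i}=(-1)^{i}\bigl[m_{i}(\widetilde{G})-2m_{i-q/2}(\widetilde{G}-V(\widetilde{C_{q}}))\bigr]$, then analyze the one delicate branch) matches the paper's, but your handling of that delicate branch is genuinely different in its mechanics. The paper's proof defines $k$ as the largest size of a matching meeting the cycle and shows $k=m(\widetilde{G})-1$ by a swap argument: taking a maximum matching $M$ and an incident edge $e=(u,v)$, it notes that $M$ must match every cycle vertex along the cycle, lets $f$ be the cycle edge of $M$ at $u$ and $g$ the edge of $M$ at $v$ (which must exist, else $M-f+e$ contradicts the hypothesis), and exhibits $M-f-g+e$; the case $\widetilde{G}=\widetilde{C_{q}}$ is then settled separately by computing $a_{q}=2(-1)^{q/2}+2(-1)^{\sigma(\widetilde{C_{q}})/2+1}=0$, and the vanishing of $a_{2m(\widetilde{G})}$ is read off from the ``otherwise'' clause of Lemma~\ref{L32}. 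You instead obtain the sub-maximum matchings meeting the cycle directly from the deletion bound $m(\widetilde{G}-\{u,v\})\geq m(\widetilde{G})-2$ (which, pleasantly, does not even use the hypothesis that no maximum matching meets the cycle, and covers all $i\leq m(\widetilde{G})-1$ at once), and you prove $a_{2m(\widetilde{G})}=0$ self-containedly via the rigidity statement that, under the hypothesis, every maximum matching splits as a perfect matching of $\widetilde{C_{q}}$ plus a maximum matching of $\widetilde{G}-V(\widetilde{C_{q}})$, so that $m_{m(\widetilde{G})}(\widetilde{G})=2m_{m(\widetilde{G})-q/2}(\widetilde{G}-V(\widetilde{C_{q}}))$; this also absorbs the $\widetilde{G}=\widetilde{C_{q}}$ case uniformly. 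What the paper's route buys is a sharp structural statement ($k=m(\widetilde{G})-1$ exactly) proved by one elegant exchange; what your route buys is independence from the ``otherwise'' clause of Lemma~\ref{L32} (you verify the vanishing by an explicit count) and a simpler construction for the lower range, at the cost of redoing a little of the counting that Lemma~\ref{L32} already packages. Your treatment of the range $i>m(\widetilde{G})$ via $m(\widetilde{G}-V(\widetilde{C_{q}}))\leq m(\widetilde{G})-q/2$ and of the easy branches is also sound.
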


\begin{proof} If $q$ is odd or $\sigma(\widetilde{C_{q}})$ is odd, or $q$ and $\sigma(\widetilde{C_{q}})$ are even and $|\sigma(\widetilde{C_{q}})-q| \equiv 2 \ (\rm{mod} \ 4)$ for $0 \leq i \leq m(\widetilde{G})$, by
Lemma \ref{L32}, the statements follow immediately. For theremainder of the proof, we assume that $q$ and $\sigma(\widetilde{C_{q}})$ are even and $|\sigma(\widetilde{C_{q}})-q| \equiv 0 \ (\rm{mod} \ 4)  $, and $0 \leq i \leq m(\widetilde{G})$.
If there exists a maximum matching which contains an edge incident to the cycle of $\widetilde{G}$, by
Lemma \ref{L32}, the statement follows immediately.
So one can assume that there exists no maximum matching which contains an edge incident to the cycle of $\widetilde{G}$, and we just need to prove that there exists
an ($m(\widetilde{G})-1$)-matching which containing an edge incident to the cycle.

Notice that if there exists an $i$-matching which containing an edge incident to the cycle for $i > 1$, then such an $(i-1)$-matching also exists. Likewise, if there exists no $i$-matching which containing an edge incident to the cycle,
then there exists no $(i+1)$-matching which containing such an edge. Thus, there is some maximum value $k$ such that
there exists a $k$-matching containing an edge incident to the cycle, but no such $(k+1)$-matching exists.
Clearly, $k \leq m(\widetilde{G})$. Also, $k=0$ if and only if $\widetilde{G}=\widetilde{C_{q}}$.

Suppose $0 < k < m(\widetilde{G})$. That is, no maximum matching which containing an edge incident to the cycle. Consider a maximum matching $M$ and an edge $e=(u, v)$ incident to the cycle such that $u \in \widetilde{C_{q}}$ and
$v \notin \widetilde{C_{q}}$. Hence,  one has that $e \notin M$. Furthermore, since $M$ is a maximum matching and $q$ is even, every vertex on the cycle is incident to a matched edge on the cycle. Let $f$
be the matched edge in the cycle which is incident to $u$. There also must exist an edge $g \in M$ incident to $v$, otherwise the matching $M-f+e$ would be a maximum matching that contradicts $k \neq m(\widetilde{G})$. Note that
$M-f-g+e$ is a matching of size $m(\widetilde{G})-1$ which containing an edge incident to the cycle and hence $k=m(\widetilde{G})-1$.

Finally, suppose $k=0$ and hence $\widetilde{G}=\widetilde{C_{q}}$. The argument in the proof of Theorem \ref{T31} shows that $a_{2i}=(-1)^{i}$
for $2i < q$. When $2i=q$, there are exactly three real elementary subgraphs: two perfect matchings and one containing only $\widetilde{C_{q}}$. By Lemma \ref{L23}, we have $a_{q}=2(-1)^{\frac{q}{2}}+2(-1)^{\frac{\sigma(\widetilde{C_{q}})}{2}+1}=0$ as $q$ and $\sigma(\widetilde{C_{q}})$ are even and $|\sigma(\widetilde{C_{q}})-q| \equiv 0 \ (\rm{mod} \ 4)  $.

The result follows.
\end{proof}

Let $\widetilde{G}$ be a mixed graph with the characteristic polynomial of its Hermitian-adjacent matrix
$\phi(\widetilde{G}, \lambda)=\lambda^{n}+a_{1}\lambda^{n-1}+a_{2}\lambda^{n-2}+ \cdots +a_{n}.$ By Lemma
\ref{L21}, the number of positive eigenvalues of $H(\widetilde{G})$ can be determined by counting the number of sign changes of the $a_{i}'s$.
Let $k$ be the maximum value such that $a_{2k}\neq 0$. Then it does not matter what the values of $a_{2i+1}$ are for $0 \leq i < k$, as they
do not affect the number of sign changes since the sign of $a_{2i+1}$ is either the same as $a_{2i}$ or $a_{2i+2}$, or $a_{2i+1}=0$. Thus, we only consider the odd coefficients $a_{2i+1}$ when $i \geq k$. We
characterize those now.

\begin{lemma}\label{L34}
Let $\widetilde{G}$ be a  mixed unicyclic graph with the cycle $\widetilde{C_{q}}$. Let $k$ be the maximum value such that $a_{2k}\neq 0$. Then $a_{2i+1}=0$ for all $i > k$.
\end{lemma}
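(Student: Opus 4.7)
The plan is to use Lemma \ref{L23} to express $a_{2i+1}$ as a signed, weighted count of real elementary subgraphs of $\widetilde{G}$ on $2i+1$ vertices, and then argue that for $i>k$ no such subgraph exists. The key structural observation is that any real elementary subgraph $B$ consists of edge-components (each contributing $2$ vertices) and cycle-components, so if $|V(B)|=2i+1$ is odd then $B$ must contain at least one cycle of odd length. Since $\widetilde{G}$ is unicyclic with unique cycle $\widetilde{C_q}$, this forces $q$ to be odd and $\widetilde{C_q}\subseteq B$.

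Case 1: $q$ even. Then $\widetilde{G}$ contains no odd cycle, so no real elementary subgraph of $\widetilde{G}$ has odd order. By Lemma \ref{L23}, $a_{2i+1}=0$ for every $i\ge 0$, and the conclusion is immediate.

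Case 2: $q$ odd. By inspecting Lemma \ref{L33}, the exceptional value $k=m(\widetilde{G})-1$ occurs only when $q$ is even, so in this case $k=m(\widetilde{G})$. A real elementary subgraph $B$ on $2i+1$ vertices must consist of $\widetilde{C_q}$ together with $i-\tfrac{q-1}{2}$ independent edges of $\widetilde{G}-V(\widetilde{C_q})$. Combining these $i-\tfrac{q-1}{2}$ edges with any matching of size $\tfrac{q-1}{2}$ inside the odd cycle $\widetilde{C_q}$ produces a matching of $\widetilde{G}$ of size exactly $i$. For $i>k=m(\widetilde{G})$ no such matching exists, so no real elementary subgraph of order $2i+1$ exists, and Lemma \ref{L23} gives $a_{2i+1}=0$.

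The argument is essentially bookkeeping once the parity observation is in place, so I do not anticipate a serious obstacle; the only point requiring care is confirming, from the definition of $k$ in Lemma \ref{L33}, that when $q$ is odd we are guaranteed $k=m(\widetilde{G})$ (rather than $m(\widetilde{G})-1$), since otherwise the desired bound $i>k$ would be weaker than $i>m(\widetilde{G})$ and the matching contradiction would fail. This is handled by noting that the exceptional clause in Lemma \ref{L33} explicitly requires $q$ even, so it cannot apply in Case 2.
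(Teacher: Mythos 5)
Your proof is correct and takes essentially the same route as the paper: for $i>k$ no real elementary subgraph of odd order $2i+1$ can exist, because any such subgraph would have to contain the odd cycle $\widetilde{C_{q}}$, and replacing that cycle by $(q-1)/2$ independent edges would produce an $i$-matching of $\widetilde{G}$, which is impossible. The only cosmetic difference is that you pin down $k=m(\widetilde{G})$ via Lemma \ref{L33}, whereas the paper contradicts the maximality of $k$ directly by observing that such an $i$-matching would force $a_{2i}\neq 0$.
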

\begin{proof}
This is trivial in the case where $q$ is even, because there exists no real elementary subgraph on an odd
number of vertices. Assume $q$ is odd. If there exists a real elementary subgraph on $2i+1$ vertices, then there
exists a real elementary subgraph on $2i$ vertices, which is found by replacing $\widetilde{C_{q}}$ (which must be included)
in the real elementary subgraph with $(q-1)/2$ copies of $K_{2}$ to get a matching of size $i$ which implies
that $a_{2i} \neq 0$ for $i > k$. Thus, $a_{2i+1}=0$ for $i > k$.
\end{proof}

Our research for the number of sign changes has now been reduced to finding the sign of $a_{2k+1}$.
All the odd coefficients are 0 when $q$ is even or $\sigma(\widetilde{C_{q}})$ is odd. The only case needed to consider is that $q$ is odd and $\sigma(\widetilde{C_{q}})$ is even, and note that
$k$ has been defined in this case which equal to $m(\widetilde{G})$. A real elementary subgraph on $2m(\widetilde{G})+1$ vertices must contain
$\widetilde{C_{q}}$ and $(2m(\widetilde{G})+1-q)/2$ copies of $K_{2}$ from $\widetilde{G}-V(\widetilde{C_{q}})$. Therefore, we have
\begin{eqnarray*}
a_{2m(\widetilde{G})+1}
&=&2m_{m(\widetilde{G})-\frac{q-1}{2}}(\widetilde{G}-V(\widetilde{C_{q}}))(-1)^{\frac{1}{2}\sigma(\widetilde{C_{q}})+\frac{2m(\widetilde{G})-q+1}{2}+1}\\
&=&2m_{m(\widetilde{G})-\frac{q-1}{2}}(\widetilde{G}-V(\widetilde{C_{q}}))(-1)^{\frac{\sigma(\widetilde{C_{q}})+2m(\widetilde{G})-q+3}{2}}\\
&=&(-1)^{m(\widetilde{G})}2m_{m(\widetilde{G})-\frac{q-1}{2}}(\widetilde{G}-V(\widetilde{C_{q}}))(-1)^{\frac{|\sigma(\widetilde{C_{q}})-q|+1}{2}}.\\
\end{eqnarray*}
So,
$$\mathrm{sgn}(a_{2m(\widetilde{G})+1})=\left\{
             \begin{array}{ll}
               (-1)^{m(\widetilde{G})+\frac{|\sigma(\widetilde{C_{q}})-q|+1}{2}}, & \hbox{if $q$ is odd and $\sigma(\widetilde{C_{q}})$ is even} \\
               & \hbox{ and $m_{m(\widetilde{G})-\frac{q-1}{2}}(\widetilde{G}-V(\widetilde{C_{q}})) > 0$;} \\
               0,  & \hbox{otherwise.}
             \end{array}
           \right.
$$
Therefore, sgn$(a_{2m(\widetilde{G})+1})$ differents from sgn$(a_{2m(\widetilde{G})})=(-1)^{m(\widetilde{G})}$ when $q$ is odd,
$\sigma(\widetilde{C_{q}})$ is even, $m_{m(\widetilde{G})-\frac{q-1}{2}}(\widetilde{G}-V(\widetilde{C_{q}})) > 0$ and $|\sigma(\widetilde{C_{q}})-q| \equiv 1 \ (\rm{mod} \ 4)  $.
And sgn$(a_{2m(\widetilde{G})+1})$ and sgn$(a_{2m(\widetilde{G})})$ are same if $q$ is odd,
$\sigma(\widetilde{C_{q}})$ is even, $m_{m(\widetilde{G})-\frac{q-1}{2}}(\widetilde{G}-V(\widetilde{C_{q}})) > 0$ and $|\sigma(\widetilde{C_{q}})-q| \equiv 3 \ (\rm{mod} \ 4)  $.
The requirement $m_{m(\widetilde{G})-\frac{q-1}{2}}(\widetilde{G}-V(\widetilde{C_{q}})) > 0$ means that there exists a maximum matching of $\widetilde{G}$ that does not use any edge between $\widetilde{C_{q}}$ and $\widetilde{G}-V(\widetilde{C_{q}})$. This is equivalent to $2m(\widetilde{G})+1=2m(\widetilde{G}-V(\widetilde{C_{q}}))+q$.

The number of positive eigenvalues can now be determined by counting the number of sign changes of the $a_{i}'s$. The number of zero eigenvalues is $n-i$, where $i$ is the largest value such that $a_{i}\neq 0$.
Hence the inertia index of a mixed unicyclic graph can then be computed by
considering the size of a maximum matching of $\widetilde{G}$ and, if necessary, $\widetilde{G}-V(\widetilde{C_{q}})$. From the above analysis,
 the following Theorem \ref{UNI} is proved.

\begin{theorem}\label{UNI}
Let $\widetilde{G}$ be a  mixed unicyclic graph with the cycle $\widetilde{C_{q}}$. Then we have
$$  (p^{+}(\widetilde{G}), n^{-}(\widetilde{G})) = \left\{
  \begin{array}{ll}
      (m(\widetilde{G})-1, m(\widetilde{G})-1), & \hbox{ if $q$ and $\sigma(\widetilde{C_{q}})$ are even, $|\sigma(\widetilde{C_{q}})-q| \equiv 0 \ (\rm{mod} \ 4)  $ } \\
              & \hbox{ and no maximum matching contains an edge } \\
              & \hbox{ incident to the cycle;} \\
      (m(\widetilde{G})+1, m(\widetilde{G})), & \hbox{ if $q$ is odd, $\sigma(\widetilde{C_{q}})$ is even, $|\sigma(\widetilde{C_{q}})-q| \equiv 1 \ (\rm{mod} \ 4)  $ } \\
                 & \hbox{ and $m(\widetilde{G})=m(\widetilde{G}-V(\widetilde{C_{q}}))+\frac{q-1}{2}$ ;} \\
      (m(\widetilde{G}), m(\widetilde{G})+1), & \hbox{ if $q$ is odd, $\sigma(\widetilde{C_{q}})$ is even, $|\sigma(\widetilde{C_{q}})-q| \equiv 3 \ (\rm{mod} \ 4)  $ } \\
                      & \hbox{ and $m(\widetilde{G})=m(\widetilde{G}-V(\widetilde{C_{q}}))+\frac{q-1}{2}$ ;} \\
      (m(\widetilde{G}), m(\widetilde{G})), & \hbox{otherwise.}
    \end{array}
  \right.
$$
\end{theorem}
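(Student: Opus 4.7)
The plan is to apply Descartes' sign rule (Lemma~\ref{L21}) to $\phi(\widetilde{G},\lambda)=\sum_{i=0}^{n}a_{i}\lambda^{n-i}$, whose roots are all real since $H(\widetilde{G})$ is Hermitian. Then $p^{+}(\widetilde{G})$ equals the number of sign changes in the sequence $a_{0},a_{1},\ldots,a_{r}$, where $r$ is the largest index with $a_{r}\neq 0$; the nullity is $n-r$, and $n^{-}(\widetilde{G})=r-p^{+}(\widetilde{G})$. So the task is to locate $r$ and count sign changes up to $r$, and this has already been reduced to a case analysis by Lemmas~\ref{L33}, \ref{L34} and the computation of $\mathrm{sgn}(a_{2m(\widetilde{G})+1})$ preceding the theorem.

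First I would fix $r$. By Lemma~\ref{L33}, the largest even index with $a_{2k}\neq 0$ is $k=m(\widetilde{G})-1$ precisely in the exceptional situation of case (i) of the theorem, and $k=m(\widetilde{G})$ otherwise. By Lemma~\ref{L34}, the only odd-indexed coefficient that can be nonzero beyond $a_{2k}$ is $a_{2k+1}$. The preceding discussion shows $a_{2m(\widetilde{G})+1}\neq 0$ exactly when $q$ is odd, $\sigma(\widetilde{C_{q}})$ is even, and $m_{m(\widetilde{G})-(q-1)/2}(\widetilde{G}-V(\widetilde{C_{q}}))>0$, which is equivalent to $m(\widetilde{G})=m(\widetilde{G}-V(\widetilde{C_{q}}))+(q-1)/2$; moreover its sign agrees with or differs from $\mathrm{sgn}(a_{2m(\widetilde{G})})=(-1)^{m(\widetilde{G})}$ according as $|\sigma(\widetilde{C_{q}})-q|\equiv 3$ or $1\pmod 4$.

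Once $r$ and the relevant signs are pinned down, the count is mechanical. Lemma~\ref{L33} says the nonzero coefficients among $a_{0},a_{2},\ldots,a_{2k}$ strictly alternate, contributing $k$ sign changes; any intervening odd coefficients are either zero or do not introduce extra changes (Descartes' rule ignores zeros). In case (i) this yields $p^{+}=m(\widetilde{G})-1$ and $r=2(m(\widetilde{G})-1)$, giving $n^{-}=m(\widetilde{G})-1$. In the \emph{otherwise} case (iv), $r=2m(\widetilde{G})$ and we get $p^{+}=n^{-}=m(\widetilde{G})$. In case (ii) $r=2m(\widetilde{G})+1$ and one extra sign change appears at the top, giving $p^{+}=m(\widetilde{G})+1$ and $n^{-}=m(\widetilde{G})$, while in case (iii) $r=2m(\widetilde{G})+1$ but the top signs coincide, so $p^{+}=m(\widetilde{G})$ and $n^{-}=m(\widetilde{G})+1$.

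The main obstacle is bookkeeping rather than any new idea: I must verify that the exceptional case (i) is \emph{precisely} the scenario in which $a_{2m(\widetilde{G})}$ vanishes (this is the content of the ``$M-f-g+e$'' rerouting argument already carried out inside Lemma~\ref{L33}), and confirm that the matching condition in cases (ii)--(iii) matches the nonvanishing condition $m_{m(\widetilde{G})-(q-1)/2}(\widetilde{G}-V(\widetilde{C_{q}}))>0$ for $a_{2m(\widetilde{G})+1}$. With this dictionary between the graph-theoretic hypotheses and the nonvanishing of the coefficients in place, the four cases assemble directly via Descartes' rule, completing the proof.
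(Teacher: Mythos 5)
Your proposal is correct and follows essentially the same route as the paper: the paper's own argument for Theorem~\ref{UNI} is exactly the assembly of Lemmas~\ref{L33} and \ref{L34} together with the sign computation for $a_{2m(\widetilde{G})+1}$ via Descartes' rule (Lemma~\ref{L21}), locating the largest nonzero coefficient to get the rank and counting sign changes to get $p^{+}$, with $n^{-}$ obtained by subtraction. Your case-by-case bookkeeping (including the equivalence of $m_{m(\widetilde{G})-\frac{q-1}{2}}(\widetilde{G}-V(\widetilde{C_{q}}))>0$ with $m(\widetilde{G})=m(\widetilde{G}-V(\widetilde{C_{q}}))+\frac{q-1}{2}$) matches the paper's.
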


\begin{theorem} \label{T36}
Let $\widetilde{G}$ be a  mixed graph. Then
$$m(\widetilde{G})-c(\widetilde{G}) \leq p^{+}(\widetilde{G}) \leq m(\widetilde{G})+c(\widetilde{G}), m(\widetilde{G})-c(\widetilde{G}) \leq n^{-}(\widetilde{G}) \leq m(\widetilde{G})+c(\widetilde{G}). $$
\end{theorem}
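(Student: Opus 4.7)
The plan is to prove both double inequalities by induction on the cyclomatic number $c(\widetilde{G})$. The base case $c(\widetilde{G})=0$ means the underlying graph is a forest; applying Lemma \ref{TREE} to each connected component and combining via Lemma \ref{COMPON} gives $p^{+}(\widetilde{G})=n^{-}(\widetilde{G})=m(\widetilde{G})$, and both bounds hold with equality.

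For the inductive step, assume $c(\widetilde{G})\ge 1$ and fix a cycle $\widetilde{C_{q}}$ of $\widetilde{G}$ together with a maximum matching $M$. Since any matching on a cycle uses at most $\lfloor q/2\rfloor<q$ of its edges, there exists some $e\in E(\widetilde{C_{q}})\setminus M$. Setting $\widetilde{G}':=\widetilde{G}-e$, the edge $e$ is not a bridge so $c(\widetilde{G}')=c(\widetilde{G})-1$, and $M$ remains a matching of $\widetilde{G}'$ so $m(\widetilde{G}')=m(\widetilde{G})$. The inductive hypothesis applied to $\widetilde{G}'$ then gives
$$m(\widetilde{G})-c(\widetilde{G})+1\ \le\ p^{+}(\widetilde{G}')\ \le\ m(\widetilde{G})+c(\widetilde{G})-1,$$
and an identical estimate for $n^{-}(\widetilde{G}')$.

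The remaining ingredient is the edge-deletion bound $|p^{+}(\widetilde{G})-p^{+}(\widetilde{G}')|\le 1$ (and its $n^{-}$ analogue), which I would deduce from Lemma \ref{DV} alone. Let $v$ be an endpoint of $e$; since $\widetilde{G}-v=\widetilde{G}'-v$, two applications of Lemma \ref{DV} yield
$$p^{+}(\widetilde{G})-1\ \le\ p^{+}(\widetilde{G}-v)=p^{+}(\widetilde{G}'-v)\ \le\ p^{+}(\widetilde{G}'),$$
and symmetrically $p^{+}(\widetilde{G}')\le p^{+}(\widetilde{G})+1$. Combining with the bound on $p^{+}(\widetilde{G}')$ above gives $p^{+}(\widetilde{G})\le p^{+}(\widetilde{G}')+1\le m(\widetilde{G})+c(\widetilde{G})$ and $p^{+}(\widetilde{G})\ge p^{+}(\widetilde{G}')-1\ge m(\widetilde{G})-c(\widetilde{G})$; the $n^{-}$ case is identical.

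The delicate point is the edge selection at the start of the inductive step: the edge $e$ must simultaneously lie on a cycle (so that $c$ decreases by exactly one) and outside some maximum matching (so that $m$ is preserved), and the elementary observation that a matching uses at most half the edges of any cycle makes both requirements achievable at once. Once this choice is made, the edge-deletion stability follows from the vertex-deletion Lemma \ref{DV} without extra work, and the induction closes.
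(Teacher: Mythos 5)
Your proof is correct, and it takes a recognizably different reduction step from the paper's, even though both arguments are inductions on $c(\widetilde{G})$ driven by interlacing. The paper deletes a \emph{vertex} $v$ lying on a cycle: this needs only Lemma \ref{DV} as stated, but it changes $m$ and $c$ only up to inequalities ($m(\widetilde{G})-1\le m(\widetilde{G}-v)\le m(\widetilde{G})$, $c(\widetilde{G}-v)\le c(\widetilde{G})-1$), so the upper and lower bounds are extracted by two slightly asymmetric chains, and the paper additionally invokes Theorem \ref{UNI} to settle the case $c(\widetilde{G})=1$ before starting the induction at $c(\widetilde{G})\ge 2$. You instead delete an \emph{edge} $e$ chosen to lie on a cycle and outside a maximum matching, which pins down both parameters exactly ($m(\widetilde{G}-e)=m(\widetilde{G})$, $c(\widetilde{G}-e)=c(\widetilde{G})-1$) and makes the two bounds fall out symmetrically from the single estimate $|p^{+}(\widetilde{G})-p^{+}(\widetilde{G}-e)|\le 1$; your derivation of this edge-deletion stability from Lemma \ref{DV} alone, via $\widetilde{G}-v=(\widetilde{G}-e)-v$ for an endpoint $v$ of $e$, is valid and avoids introducing any new spectral lemma. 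The price is the small combinatorial check that such an edge $e$ exists (a matching covers at most $\lfloor q/2\rfloor<q$ edges of a $q$-cycle), which you supply; the payoff is that only the forest case ($c=0$, Lemmas \ref{TREE} and \ref{COMPON}) is needed as a base, so your argument is independent of the unicyclic classification in Theorem \ref{UNI}. All steps check out, including the identity $c(\widetilde{G}-e)=c(\widetilde{G})-1$ (an edge on a cycle is not a bridge, so $\omega$ is unchanged in $c(G)=|E|-|V|+\omega$) and the use of strong induction on $c$ irrespective of the order of the graph.
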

\begin{proof} The proof is by induction on $c(\widetilde{G})$. If $c(\widetilde{G})=0$, then $\widetilde{G}$ is a tree. By Lemma \ref{TREE}, we have
$p^{+}(\widetilde{G})=n^{-}(\widetilde{G})=m(\widetilde{G})$. If $c(\widetilde{G})=1$, then $\widetilde{G}$ is a unicyclic graph. By Theorem \ref{UNI}, the result holds immediately.
In the following, suppose $c(\widetilde{G}) \geq 2$. Let $v$ be a vertex lying on a cycle of $\widetilde{G}$ and denote $\widetilde{H}=\widetilde{G}-v$.
Thus, $c(\widetilde{H}) \leq c(\widetilde{G})-1$. Applying the induction to $\widetilde{H}$, we have
$$m(\widetilde{H})-c(\widetilde{H}) \leq p^{+}(\widetilde{H}) \leq m(\widetilde{H})+c(\widetilde{H}).$$

By Lemma \ref{DV}, we have
$$p^{+}(\widetilde{G}) \leq p^{+}(\widetilde{H})+1\leq m(\widetilde{H})+c(\widetilde{H})+1\leq m(\widetilde{H})+c(\widetilde{G})\leq m(\widetilde{G})+c(\widetilde{G})$$
and
$$p^{+}(\widetilde{G})\geq p^{+}(\widetilde{H}) \geq m(\widetilde{H})-c(\widetilde{H}) \geq (m(\widetilde{G})-1)-(c(\widetilde{G})-1)=m(\widetilde{G})-c(\widetilde{G}).$$

The discussion for $n^{-}(\widetilde{G})$ is similar and is omitted.
\end{proof}

\begin{corollary}\label{C37}
Let $\widetilde{G}$ be a mixed graph which contains at least one cycle. If $p^{+}(\widetilde{G})= m(\widetilde{G})+c(\widetilde{G})$, then for any vertex $v$
lying on a cycle of $\widetilde{G}$, one has

{\em(i)} $p^{+}(\widetilde{G}-v)=p^{+}(\widetilde{G})-1$;

{\em(ii)} $p^{+}(\widetilde{G}-v)=m(\widetilde{G}-v)+c(\widetilde{G}-v)$;

{\em(iii)} $m(\widetilde{G}-v)=m(\widetilde{G})$;

{\em(iv)} $c(\widetilde{G}-v)=c(\widetilde{G})-1$;

{\em(v)}  $rank(\widetilde{G})-2 \leq rank(\widetilde{G}-v) \leq rank(\widetilde{G})-1$;

{\em(vi)} $v$ is not a quasi-pendant vertex;

{\em(vii)} Any two cycles of $\widetilde{G}$ are vertex disjoint.

\end{corollary}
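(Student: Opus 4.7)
The plan is to derive parts (i)--(iv) together via a single tight squeeze, deduce (v) directly, handle (vi) by a short matching contradiction, and finally tackle (vii)---the main obstacle---by a block-decomposition argument driven by (iv). Since $v$ lies on a cycle of $\widetilde{G}$, we have the elementary bounds $m(\widetilde{G}-v)\leq m(\widetilde{G})$ and $c(\widetilde{G}-v)\leq c(\widetilde{G})-1$, the latter because deleting a vertex on a cycle destroys at least one independent cycle. Chaining Lemma~\ref{DV} with Theorem~\ref{T36} applied to $\widetilde{G}-v$ yields
\[
p^{+}(\widetilde{G})-1 \;\leq\; p^{+}(\widetilde{G}-v) \;\leq\; m(\widetilde{G}-v)+c(\widetilde{G}-v) \;\leq\; m(\widetilde{G})+c(\widetilde{G})-1 \;=\; p^{+}(\widetilde{G})-1,
\]
so every inequality is an equality. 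This immediately yields (i) and (ii); equality in the rightmost step, combined with the two one-sided bounds, then forces (iii) and (iv).

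Part (v) follows at once: the lower bound is Lemma~\ref{DV}, and the upper bound combines (i) with $n^{-}(\widetilde{G}-v)\leq n^{-}(\widetilde{G})$ from the same lemma. For (vi) I argue by contradiction: if $v$ were a quasi-pendant vertex with pendant neighbour $u$, then $u$ would be isolated in $\widetilde{G}-v$, so $m(\widetilde{G}-v)=m(\widetilde{G}-\{u,v\})$, and together with (iii) this gives $m(\widetilde{G})=m(\widetilde{G}-\{u,v\})$; this contradicts the obvious inequality $m(\widetilde{G})\geq m(\widetilde{G}-\{u,v\})+1$ obtained by appending the edge $uv$ to a maximum matching of $\widetilde{G}-\{u,v\}$.

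The main obstacle is (vii), which must leverage (iv) uniformly across all cyclic vertices. Reducing to the connected case via Lemma~\ref{COMPON}, the cycle-rank identity $c(\widetilde{G}-v)=c(\widetilde{G})-d(v)+\omega(\widetilde{G}-v)$ together with the block-cut tree fact $\omega(\widetilde{G}-v)=j(v)$ (where $j(v)$ denotes the number of blocks of $\widetilde{G}$ containing $v$) combines with (iv) to force $\sum_{B\ni v}(d_{B}(v)-1)=1$. Each summand is a nonnegative integer, and since every vertex of a $2$-connected block on at least three vertices has block-degree at least $2$, this sum can equal $1$ only when $v$ lies in exactly one non-bridge block $B^{\ast}(v)$, with $d_{B^{\ast}(v)}(v)=2$, together with possibly some bridge-blocks. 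Applying this conclusion to every vertex of a non-bridge block (each such vertex lies on a cycle of $\widetilde{G}$ and therefore satisfies the same analysis) forces every non-bridge block of $\widetilde{G}$ to be $2$-regular, hence itself a cycle. Consequently the cycles of $\widetilde{G}$ coincide with its non-bridge blocks; if two distinct cycles shared a vertex, that vertex would lie in two distinct non-bridge blocks, violating the uniqueness of $B^{\ast}(v)$. Hence any two cycles of $\widetilde{G}$ must be vertex-disjoint.
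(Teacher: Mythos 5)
Your argument is correct, and it actually supplies more than the paper does: Corollary \ref{C37} is stated there without proof, as a consequence of Lemma \ref{DV} and Theorem \ref{T36} in the spirit of \cite{FANYZ}. For items (i)--(vi) you follow exactly that intended route: the chain $p^{+}(\widetilde{G})-1\leq p^{+}(\widetilde{G}-v)\leq m(\widetilde{G}-v)+c(\widetilde{G}-v)\leq m(\widetilde{G})+c(\widetilde{G})-1$ collapses to equalities, which yields (i)--(iv), then (v) from $rank=p^{+}+n^{-}$ and Lemma \ref{DV}, and (vi) by the matching contradiction with the pendant neighbour; all of these steps check out, including the needed facts $m(\widetilde{G}-v)\leq m(\widetilde{G})$ and $c(\widetilde{G}-v)\leq c(\widetilde{G})-1$ for $v$ on a cycle. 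Where you genuinely diverge is (vii): instead of the usual contradiction argument (if two cycles met, one would locate a suitable branch vertex $w$ on them with $c(\widetilde{G}-w)\leq c(\widetilde{G})-2$, contradicting (iv)), you apply (iv) uniformly to every vertex on a cycle and translate it, via $c(\widetilde{G}-v)=c(\widetilde{G})-d(v)+\omega(\widetilde{G}-v)$ and $\omega(\widetilde{G}-v)=j(v)$, into the condition $\sum_{B\ni v}(d_{B}(v)-1)=1$, forcing each non-bridge block to be $2$-regular, hence a cycle, so cycles are exactly the non-bridge blocks and are pairwise disjoint. This block-counting route is sound (the facts you invoke -- each edge at $v$ lies in one block, block-degrees of vertices of a $2$-connected block are at least $2$, components of $\widetilde{G}-v$ correspond to blocks through $v$ -- are standard) and it has a real advantage: it avoids the delicate choice of the vertex in the contradiction argument, which matters because for two intersecting cycles not every common vertex witnesses a drop of $2$ in $c$ (e.g., a degree-two vertex of a theta graph lies on two cycles yet $c$ drops only by $1$); your global argument sidesteps that pitfall entirely. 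The only points worth spelling out in a final write-up are the componentwise reduction (additivity of $p^{+}$, $m$, $c$ plus Theorem \ref{T36} per component) and the restriction to connected $\widetilde{G}$ when using the identity for $c(\widetilde{G}-v)$, both of which you indicate but do not detail.
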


\begin{corollary}\label{C38}
Let $G$ be a mixed graph which contains at least one cycle. If $p^{+}(\widetilde{G})= m(\widetilde{G})-c(\widetilde{G})$, then for any vertex $v$
lying on a cycle of $G$, one has

{\em(i)} $p^{+}(\widetilde{G}-v)=p^{+}(\widetilde{G})$;

{\em(ii)} $p^{+}(\widetilde{G}-v)=m(\widetilde{G}-v)+c(\widetilde{G}-v)$;

{\em(iii)} $m(\widetilde{G}-v)=m(\widetilde{G})-1$;

{\em(iv)} $c(\widetilde{G}-v)=c(\widetilde{G})-1$;

{\em(v)}  $rank(\widetilde{G})-1 \leq rank(\widetilde{G}-v) \leq rank(\widetilde{G})$;

{\em(vi)} $v$ is not a quasi-pendant vertex;

{\em(vii)} Any two cycles of $\widetilde{G}$ are vertex disjoint.

\end{corollary}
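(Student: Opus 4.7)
The strategy mirrors the proof of Corollary \ref{C37} but with every inequality reversed, since now $\widetilde{G}$ saturates the \emph{lower} bound of Theorem \ref{T36}. Throughout, fix a vertex $v$ on some cycle of $\widetilde{G}$. Two elementary facts drive the argument: removing a single vertex drops the matching number by at most one, so $m(\widetilde{G})-1 \leq m(\widetilde{G}-v) \leq m(\widetilde{G})$; and since $v$ lies on at least one cycle, the deletion breaks that cycle as a generator of the cycle space, so $c(\widetilde{G}-v) \leq c(\widetilde{G})-1$, with a loss of at least $2$ whenever $v$ belongs to two or more distinct cycles.

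The heart of the proof is a single squeeze. Applying Theorem \ref{T36} to $\widetilde{G}-v$ and chaining with Lemma \ref{DV} gives
$$p^{+}(\widetilde{G}) \;\geq\; p^{+}(\widetilde{G}-v) \;\geq\; m(\widetilde{G}-v)-c(\widetilde{G}-v) \;\geq\; (m(\widetilde{G})-1)-(c(\widetilde{G})-1) \;=\; p^{+}(\widetilde{G}),$$
so every inequality is in fact an equality. Tightness of the outermost inequality yields (i); tightness of the Theorem \ref{T36} bound on $\widetilde{G}-v$ yields (ii); and tightness of the two elementary estimates $m(\widetilde{G}-v) \geq m(\widetilde{G})-1$ and $c(\widetilde{G}-v) \leq c(\widetilde{G})-1$ yields (iii) and (iv), respectively. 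For (v), I would combine $p^{+}(\widetilde{G}-v) = p^{+}(\widetilde{G})$ from (i) with the interlacing bound $n^{-}(\widetilde{G})-1 \leq n^{-}(\widetilde{G}-v) \leq n^{-}(\widetilde{G})$ provided by Lemma \ref{DV} and add, obtaining $rank(\widetilde{G})-1 \leq rank(\widetilde{G}-v) \leq rank(\widetilde{G})$.

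For (vi), I would argue by contradiction: if $v$ were quasi-pendant with a pendant neighbour $u$, then $u$ becomes isolated in $\widetilde{G}-v$, so Lemma \ref{COMPON} gives $p^{+}(\widetilde{G}-v) = p^{+}(\widetilde{G}-\{u,v\})$, and Lemma \ref{L29} gives $p^{+}(\widetilde{G}-\{u,v\}) = p^{+}(\widetilde{G})-1$, contradicting (i). For (vii), suppose $v$ lay on two distinct cycles; then $c(\widetilde{G}-v) \leq c(\widetilde{G})-2$, so the displayed squeeze now forces $p^{+}(\widetilde{G}-v) \geq p^{+}(\widetilde{G})+1$, contradicting Lemma \ref{DV}.

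The main obstacle is recognising that saturation of the lower bound forces simultaneous sharpness of three a priori independent inequalities inside one chain, which is what collapses (i)--(iv) out at once; once that observation is in place, items (v)--(vii) follow by short arguments that only recombine lemmas already in the paper.
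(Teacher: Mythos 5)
Your squeeze argument is the right engine for this corollary (which the paper states without proof, as an extract from the argument of Theorem \ref{T36}), and it does deliver (i)--(vi): chaining Lemma \ref{DV}, Theorem \ref{T36} applied to $\widetilde{G}-v$, and the elementary estimates $m(\widetilde{G}-v)\ge m(\widetilde{G})-1$ and $c(\widetilde{G}-v)\le c(\widetilde{G})-1$ forces equality throughout; since the last link is the sum of two like-directed inequalities, each of them is individually tight, which gives (iii) and (iv), and then (v) and (vi) follow as you say from Lemmas \ref{DV}, \ref{COMPON} and \ref{L29}. One clarification on (ii): what your chain makes tight is the \emph{lower} bound, i.e.\ $p^{+}(\widetilde{G}-v)=m(\widetilde{G}-v)-c(\widetilde{G}-v)$; this is the version the paper actually invokes later (in the proof of Theorem \ref{T312}), so the plus sign in the printed statement of (ii) should be read as a minus --- your argument proves the intended statement rather than the literal one.

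The genuine gap is in (vii). You claim that if $v$ lies on two distinct cycles then $c(\widetilde{G}-v)\le c(\widetilde{G})-2$, but this implication is false: in a theta graph (two vertices joined by three internally disjoint paths) an internal vertex $v$ of one of the paths lies on two distinct cycles, yet $c(\widetilde{G}-v)=c(\widetilde{G})-1$. So the contradiction you aim for need not materialize at the vertex you fixed; moreover (vii) is a statement about \emph{all} pairs of cycles, so it cannot be settled by running the squeeze at one arbitrary cycle vertex. The standard repair is a selection lemma: if two distinct cycles $\widetilde{C_{1}}$, $\widetilde{C_{2}}$ share a vertex, then some suitably chosen vertex $u$ on a cycle satisfies $c(\widetilde{G}-u)\le c(\widetilde{G})-2$. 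For instance, if the two cycles meet in exactly one vertex $w$, deleting $w$ works, since the two neighbours of $w$ on each $\widetilde{C_{i}}$ remain in a single component of $\widetilde{G}-w$, whence $\omega(\widetilde{G}-w)\le\omega(\widetilde{G})+\deg(w)-3$ and therefore $c(\widetilde{G}-w)\le c(\widetilde{G})-2$; the case where the intersection is larger needs its own short argument (choose a branch vertex of $\widetilde{C_{1}}\cup\widetilde{C_{2}}$). Running your squeeze at that particular $u$ then yields $p^{+}(\widetilde{G}-u)\ge m(\widetilde{G}-u)-c(\widetilde{G}-u)\ge p^{+}(\widetilde{G})+1$, contradicting Lemma \ref{DV}. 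Without this lemma and the accompanying choice of vertex, item (vii) is unproved.
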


By Theorem \ref{UNI} and the similar methods in \cite{FANYZ} which used for discussing the positive and negative inertia index of undirected graph, we can get the following theorems. Here we only give the proofs of Theorems \ref{T39} and \ref{T312} in the appendix.

\begin{theorem}\label{T39}
Let $\widetilde{G}$ be a mixed connected graph. Then $p^{+}(\widetilde{G})= m(\widetilde{G})+c(\widetilde{G})$ if and only if the following three conditions all holds.

{\em(i)} Any two cycles of $\widetilde{G}$ share no common vertices;

{\em(ii)} For each cycle $\widetilde{C_{q}}$ of $\widetilde{G}$, $q$ is odd, $\sigma(\widetilde{C_{q}})$ is even and $|\sigma(\widetilde{C_{q}})-q| \equiv 1 \ (\rm{mod} \ 4)  $;

{\em(iii)} $m(T_{\widetilde{G}})=m([T_{\widetilde{G}}])$.
\end{theorem}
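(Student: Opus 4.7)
I would prove both implications by induction on $c(\widetilde{G})$, with $c=0$ handled by Lemma~\ref{TREE} and $c=1$ by Theorem~\ref{UNI}. In the unicyclic base, condition (ii) matches the parity/signature hypothesis of the $(m+1,m)$-branch of Theorem~\ref{UNI} verbatim, and condition (iii) is equivalent to the matching identity $m(\widetilde{G})=m(\widetilde{G}-V(\widetilde{C}_q))+(q-1)/2$ appearing there. This equivalence holds because $T_{\widetilde{G}}$ has a single cyclic vertex $c$ with $[T_{\widetilde{G}}]=\widetilde{G}-V(\widetilde{C}_q)$, so $m(T_{\widetilde{G}})=m([T_{\widetilde{G}}])$ says that no edge in $\mathscr{F}(\widetilde{G})$ can augment a maximum matching of $[T_{\widetilde{G}}]$; this forces every endpoint of an $\mathscr{F}(\widetilde{G})$-edge to be matching-critical in $[T_{\widetilde{G}}]$, and a counting argument over any maximum matching of $\widetilde{G}$ then yields the identity.

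\textbf{Necessity for $c(\widetilde{G})\geq 2$.} Condition (i) is exactly Corollary~\ref{C37}(vii). Fix any cycle $\widetilde{C}$ of $\widetilde{G}$ and a vertex $v$ lying on a different cycle. By Corollary~\ref{C37}, $\widetilde{G}-v$ still attains $p^{+}=m+c$ with $c(\widetilde{G}-v)=c(\widetilde{G})-1$, so the inductive hypothesis supplies (i)--(iii) for $\widetilde{G}-v$. Since $\widetilde{C}$ persists as a cycle of $\widetilde{G}-v$ with unchanged length and signature, (ii) transfers to $\widetilde{G}$. For (iii), I would compare $T_{\widetilde{G}}$ with $T_{\widetilde{G}-v}$: only the cyclic vertex corresponding to $v$'s cycle is affected, expanding into a path in $T_{\widetilde{G}-v}$, and the matching identity $m(T_{\cdot})=m([T_{\cdot}])$ transfers using the matching/cycle-space identities of Corollary~\ref{C37}(iii)(iv).

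\textbf{Sufficiency for $c(\widetilde{G})\geq 2$.} Assume (i)--(iii). Fix a cycle $\widetilde{C}_q$ and pick $v\in V(\widetilde{C}_q)$ left unmatched by a maximum matching of $\widetilde{G}$ that avoids every edge of $\mathscr{F}(\widetilde{G})$; such a matching and vertex exist because (iii) produces a maximum matching of $T_{\widetilde{G}}$ not covering the cyclic vertex of $\widetilde{C}_q$, which lifts to a maximum matching of $\widetilde{G}$ using only edges inside cycles or inside $[T_{\widetilde{G}}]$, and the odd parity of $q$ forced by (ii) leaves one cycle-vertex unmatched. Then $m(\widetilde{G}-v)=m(\widetilde{G})$, (i)--(iii) transfer to $\widetilde{G}-v$ by the same bookkeeping, and the inductive hypothesis gives $p^{+}(\widetilde{G}-v)=m(\widetilde{G})+c(\widetilde{G})-1$. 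Theorem~\ref{T36} and Lemma~\ref{DV} then confine $p^{+}(\widetilde{G})\in\{m(\widetilde{G})+c(\widetilde{G})-1,\,m(\widetilde{G})+c(\widetilde{G})\}$. To isolate the upper endpoint I would carry through the parallel induction for $n^{-}$: under (ii) each cycle contributes the $(m+1,m)$-pattern of Theorem~\ref{UNI}, so inductively $n^{-}(\widetilde{G})=m(\widetilde{G})$, and a direct sign-count on the leading coefficients of $\phi(\widetilde{G},\lambda)$ generalizing Lemmas~\ref{L32}--\ref{L34} to the multi-cycle setting exhibits the extra positive eigenvalue.

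\textbf{Main obstacle.} The crux is closing the last ``$+1$'' in the sufficiency step: interlacing and the upper bound of Theorem~\ref{T36} only bracket $p^{+}(\widetilde{G})$ within one unit of the target. I expect the resolution to require extending the polynomial-coefficient arguments of Lemmas~\ref{L32}--\ref{L34} from the unicyclic to the multi-cycle setting, using condition (ii) cycle-by-cycle to control the $\sigma$-dependent signs in Lemma~\ref{L23}. A secondary technical obstacle is the stability of condition (iii) under deletion of a vertex on a cycle; this rests on the equivalence ``$m(T_{\widetilde{G}})=m([T_{\widetilde{G}}])$ iff every endpoint of an $\mathscr{F}(\widetilde{G})$-edge is matching-critical in $[T_{\widetilde{G}}]$'' established at the base case, which I expect to re-use at each step of the induction.
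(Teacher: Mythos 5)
There is a genuine gap, and you have in fact named it yourself: your sufficiency argument deletes a vertex $v$ on a cycle and can only bracket $p^{+}(\widetilde{G})$ between $m(\widetilde{G})+c(\widetilde{G})-1$ and $m(\widetilde{G})+c(\widetilde{G})$ via Lemma \ref{DV} and Theorem \ref{T36}; the final ``$+1$'' is never established, and your proposed remedy (extending the coefficient Lemmas \ref{L32}--\ref{L34} to graphs with several cycles) is a substantial project that the paper never needs. The missing idea is to exploit condition (iii) \emph{before} deleting anything: by Lemma \ref{A1}, $m(T_{\widetilde{G}})=m([T_{\widetilde{G}}])$ forces $\widetilde{G}\in\mathscr{G}$ to have a pendant vertex $u$ whose quasi-pendant neighbour $v$ lies outside every cycle. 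Deleting the pair $\{u,v\}$ is an \emph{exact} reduction: Lemma \ref{L29} gives $p^{+}(\widetilde{G})=p^{+}(\widetilde{G}-\{u,v\})+1$, Lemma \ref{A3} gives $m(\widetilde{G})=m(\widetilde{G}-\{u,v\})+1$, $c$ is unchanged, and all three conditions are inherited, so induction on the order (with disjoint unions of trees and cycles as the base, via Lemma \ref{TREE} and Theorem \ref{T31}) closes the sufficiency with no interlacing slack at all.

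The second gap is in your necessity argument for (iii). The assertion that $m(T_{\widetilde{G}-v})=m([T_{\widetilde{G}-v}])$ ``transfers'' to $m(T_{\widetilde{G}})=m([T_{\widetilde{G}}])$ by bookkeeping with Corollary \ref{C37}(iii)(iv) is precisely the nontrivial point: when $v$ lies on a cycle $\widetilde{C_{q}}$, the graph $[T_{\widetilde{G}-v}]$ acquires the path $\widetilde{C_{q}}-v$ together with all former $\mathscr{F}$-edges at its vertices, and its matching number can exceed $m([T_{\widetilde{G}}])+\frac{q-1}{2}$, so the identity does not pass back automatically. The paper instead proves (iii) by a case analysis: if $\widetilde{G}$ has a pendant vertex, delete it together with its quasi-pendant neighbour (off-cycle by Corollary \ref{C37}(vi)) and use Lemmas \ref{L29} and \ref{A3}; if not, take a pendant cycle $\widetilde{C}$ with attachment vertex $u$, apply Corollary \ref{C37} at $u$ to get both $p^{+}(\widetilde{K})=m(\widetilde{K})+c(\widetilde{K})$ and $m(\widetilde{G})=m(\widetilde{C})+m(\widetilde{K})$ for $\widetilde{K}=\widetilde{G}-\widetilde{C}$, and then combine the inductive hypothesis with Lemma \ref{A2} to build a maximum matching of $\widetilde{G}$ disjoint from $\mathscr{F}(\widetilde{G})$. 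Your treatment of necessity for (i) and (ii) (Corollary \ref{C37}(vii), plus reduction to the unicyclic Theorem \ref{UNI}) is sound and close to the paper's, but without the two repairs above the proof does not go through; you should also record that the inductive statement must be applied componentwise (via Lemma \ref{COMPON} and Theorem \ref{T36}), since the deletions can disconnect the graph.
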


If $\widetilde{G} \in \mathscr{G} $ and $\widetilde{G}$ contains only odd cycles, it can be checked that $m(T_{\widetilde{G}})=m([T_{\widetilde{G}}])$ if and only if there exists a maximum matching $M(\widetilde{G})$ of $\widetilde{G}$ such that $M(\widetilde{G}) \cap \mathscr{F}(\widetilde{G}) =\emptyset$. Then we have an alternative version of Theorem \ref{T39} in the following.

\begin{theorem}
Let $\widetilde{G}$ be a mixed connected graph. Then $p^{+}(\widetilde{G})= m(\widetilde{G})+c(\widetilde{G})$ if and only if the following three conditions all holds.

{\em(i)} Any two cycles of $\widetilde{G}$ share no common vertices;

{\em(ii)} For each cycle $\widetilde{C_{q}}$ of $\widetilde{G}$, $q$ is odd, $\sigma(\widetilde{C_{q}})$ is even and $|\sigma(\widetilde{C_{q}})-q| \equiv 1 \ (\rm{mod} \ 4)  $;

{\em(iii)} There exists a maximum matching $M(\widetilde{G})$ of $\widetilde{G}$ such that $M(\widetilde{G}) \cap \mathscr{F}(\widetilde{G}) =\emptyset$.
\end{theorem}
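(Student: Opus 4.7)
The plan is to invoke Theorem~\ref{T39} and reformulate its condition (iii). Under (i) and (ii), it suffices to show that $m(T_{\widetilde{G}})=m([T_{\widetilde{G}}])$ holds if and only if some maximum matching of $\widetilde{G}$ is disjoint from $\mathscr{F}(\widetilde{G})$. Write $s=\sum_i (q_i-1)/2$, where $\widetilde{C}_{q_i}$ ranges over the cycles of $\widetilde{G}$.

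For the easy direction, given such a maximum matching $M$, the matching splits into a matching of $[T_{\widetilde{G}}]$ plus matchings of the odd cycles, so $m(\widetilde{G})=|M|\le m([T_{\widetilde{G}}])+s$. Conversely, any maximum matching $N'$ of $T_{\widetilde{G}}$ lifts to a matching of $\widetilde{G}$ of size $|N'|+s$: for each cyclic vertex $v_i$ matched in $N'$ to some non-cyclic $w$, use the corresponding $\mathscr{F}$-edge $u_iw$ in $\widetilde{G}$ and then perfectly match the remaining $q_i-1$ cycle vertices (possible since each cycle is odd). Thus $m(T_{\widetilde{G}})+s\le m(\widetilde{G})=m([T_{\widetilde{G}}])+s$, forcing $m(T_{\widetilde{G}})\le m([T_{\widetilde{G}}])$; the reverse being trivial, (iii) follows.

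For the harder direction, assume $m(T_{\widetilde{G}})=m([T_{\widetilde{G}}])$, let $N$ be a maximum matching of $[T_{\widetilde{G}}]$, and set $M_0=N\cup (\bigcup_i M_i)$, where each $M_i$ is a maximum matching of $\widetilde{C}_{q_i}$; then $M_0 \cap \mathscr{F}(\widetilde{G}) = \emptyset$ and $|M_0|=m([T_{\widetilde{G}}])+s$. Two structural observations will prove $M_0$ is maximum. First, by (i), any two non-cyclic neighbors of the same cycle lie in different components of $[T_{\widetilde{G}}]$, else the joining path together with that cycle would form a second cycle sharing vertices with the original, violating (i). Second, under the hypothesis, for any component $C$ of $[T_{\widetilde{G}}]$ and any subset $S\subseteq V(C)$ consisting of non-cyclic neighbors of \emph{distinct} cycles, $m(C-S)\le m(C)-|S|$: otherwise, combining the edges $v_iw$ (one for each $w\in S$ with its associated cycle $\widetilde{C}_{q_i}$), a matching of $C-S$ of size $>m(C)-|S|$, and maximum matchings of all other components of $[T_{\widetilde{G}}]$ would yield a matching of $T_{\widetilde{G}}$ larger than $m([T_{\widetilde{G}}])=m(T_{\widetilde{G}})$, a contradiction.

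Given any matching $M^*$ of $\widetilde{G}$ with $a_i$ $\mathscr{F}$-edges and $b_i$ cycle edges at $\widetilde{C}_{q_i}$, the odd-cycle bound gives $a_i+b_i\le (q_i-1)/2+\lceil a_i/2\rceil$. By the two structural observations, the $\sum_i a_i$ non-cyclic endpoints of the $\mathscr{F}$-edges of $M^*$ force $|M^*\cap E([T_{\widetilde{G}}])|\le m([T_{\widetilde{G}}])-\sum_i a_i$; summing yields $|M^*|\le m([T_{\widetilde{G}}])+s-\sum_i\lfloor a_i/2\rfloor\le |M_0|$. Hence $M_0$ is a maximum matching of $\widetilde{G}$ disjoint from $\mathscr{F}(\widetilde{G})$. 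The hard part will be establishing the second structural observation; once it is in hand, the rest is routine bookkeeping.
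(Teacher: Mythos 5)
Your reduction to Theorem~\ref{T39} is exactly the paper's route, but the paper settles the equivalence between its condition (iii), namely $m(T_{\widetilde{G}})=m([T_{\widetilde{G}}])$, and the present condition (iii) with the remark that it ``can be checked'', implicitly leaning on the cited Lemmas~\ref{A2} and~\ref{A6} (from \cite{RULA} and \cite{FANYZ}), which are purely matching-theoretic statements about the underlying undirected graph. You instead prove that equivalence from scratch: your easy direction is in substance Lemma~\ref{A6}, and your harder direction replaces the citation of Lemma~\ref{A2} by the two structural observations together with the odd-cycle count $a_i+b_i\le (q_i-1)/2+\lceil a_i/2\rceil$. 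I checked these: observation one follows from (i) as you say (whether the two neighbours attach to the same or to different cycle vertices, a forbidden second cycle appears); observation two is a correct use of the hypothesis $m(T_{\widetilde{G}})=m([T_{\widetilde{G}}])$, the edges $v_iw$ forming a matching in $T_{\widetilde{G}}$ precisely because, by observation one, the vertices of $S$ inside one component are neighbours of distinct cycles; and the bookkeeping then yields $|M^{*}|\le m([T_{\widetilde{G}}])+s=|M_0|$. So your proof buys self-containedness at the cost of counting that the paper outsources to the literature.

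One case you should add explicitly: an edge of $\widetilde{G}$ may join two distinct cycles; such an edge belongs to $\mathscr{F}(\widetilde{G})$ but has no non-cyclic endpoint, so your phrase ``the $\sum_i a_i$ non-cyclic endpoints of the $\mathscr{F}$-edges of $M^{*}$'' (and the splitting and lifting in the easy direction) silently ignores it. The fix is one line. In the harder direction, if two cyclic vertices were adjacent in $T_{\widetilde{G}}$, then adding that edge to a maximum matching of $[T_{\widetilde{G}}]$ gives $m(T_{\widetilde{G}})\ge m([T_{\widetilde{G}}])+1$, contradicting the hypothesis, so no cycle--cycle edges exist and every $\mathscr{F}$-edge of $M^{*}$ indeed has a non-cyclic endpoint. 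In the easy direction such edges lie in $\mathscr{F}(\widetilde{G})$ and are therefore avoided by the assumed maximum matching $M$, and your lift of a maximum matching $N'$ of $T_{\widetilde{G}}$ treats an edge of $N'$ between two cyclic vertices just like a cyclic--non-cyclic one. With that remark inserted, your argument is complete and correct.
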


\begin{theorem}
Let $\widetilde{G}$ be a mixed connected graph. Then $n^{-}(\widetilde{G})= m(\widetilde{G})+c(\widetilde{G})$ if and only if $\widetilde{G}$ satisfies both of the first two conditions and either one of the last two conditions;

{\em(i)} Any two cycles of $\widetilde{G}$ share no common vertices;

{\em(ii)} For each cycle $\widetilde{C_{q}}$ of $\widetilde{G}$, $q$ is odd, $\sigma(\widetilde{C_{q}})$ is even and $|\sigma(\widetilde{C_{q}})-q| \equiv 3 \ (\rm{mod} \ 4)  $;

{\em(iii)} There exists a maximum matching $M(\widetilde{G})$ of $\widetilde{G}$ such that $M(\widetilde{G}) \cap \mathscr{F}(\widetilde{G}) =\emptyset$;

{\em(iv)} $m(T_{\widetilde{G}})=m([T_{\widetilde{G}}])$.
\end{theorem}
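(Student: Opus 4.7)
The plan is to mirror the proof of Theorem \ref{T39} essentially verbatim, with the parity condition $|\sigma(\widetilde{C_{q}})-q| \equiv 1 \pmod{4}$ replaced throughout by $|\sigma(\widetilde{C_{q}})-q| \equiv 3 \pmod{4}$. The substitution is forced because in the unicyclic case of Theorem \ref{UNI}, these two residue classes are exactly what discriminates between $p^{+}(\widetilde{G}) = m(\widetilde{G})+1$ and $n^{-}(\widetilde{G}) = m(\widetilde{G})+1$, while every other combinatorial feature coincides. The equivalence of conditions (iii) and (iv) under (i) and (ii) is supplied by the same observation used just before the alternative matching-formulation of Theorem \ref{T39}: when $\widetilde{G} \in \mathscr{G}$ has only odd cycles, $m(T_{\widetilde{G}}) = m([T_{\widetilde{G}}])$ holds iff some maximum matching of $\widetilde{G}$ avoids $\mathscr{F}(\widetilde{G})$.

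I would proceed by induction on $c(\widetilde{G})$. The base case $c(\widetilde{G}) = 1$ is immediate from Theorem \ref{UNI}. For necessity in the inductive step, I first establish the $n^{-}$-analog of Corollary \ref{C37} by transcribing its proof with $p^{+}$ replaced by $n^{-}$; this yields (i), the identity $n^{-}(\widetilde{G}-v) = m(\widetilde{G}-v) + c(\widetilde{G}-v)$ for any cyclic vertex $v$, together with $m(\widetilde{G}-v) = m(\widetilde{G})$ and $c(\widetilde{G}-v) = c(\widetilde{G})-1$. Induction then gives (ii) and (iii)/(iv) for $\widetilde{G}-v$, and reducing the deleted cycle to the unicyclic case of Theorem \ref{UNI} recovers (ii) for that cycle as well. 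For sufficiency, select a cycle $\widetilde{C_{q}}$ and, using (iii), a maximum matching $M(\widetilde{G})$ with $M(\widetilde{G}) \cap \mathscr{F}(\widetilde{G}) = \emptyset$. Since $q$ is odd by (ii), the restriction of $M(\widetilde{G})$ to $\widetilde{C_{q}}$ leaves at least one vertex $v \in V(\widetilde{C_{q}})$ unsaturated; $M(\widetilde{G})$ is then also a maximum matching of $\widetilde{G}-v$ witnessing (iii) for $\widetilde{G}-v$. Applying induction to $\widetilde{G}-v$ yields $n^{-}(\widetilde{G}-v) = m(\widetilde{G}) + c(\widetilde{G}) - 1$.

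The main obstacle is the last step of sufficiency: Theorem \ref{T36} and Lemma \ref{DV} together force $n^{-}(\widetilde{G}) \in \{m(\widetilde{G})+c(\widetilde{G})-1,\ m(\widetilde{G})+c(\widetilde{G})\}$, and one must rule out the smaller value. I would handle this via Descartes' sign rule (Lemma \ref{L21}) after extending Lemmas \ref{L32}--\ref{L34} from the unicyclic setting to $\widetilde{G} \in \mathscr{G}$ satisfying (i)--(ii). Under those conditions, $\mathrm{sgn}(a_{2i}) = (-1)^{i}$ for $0 \leq i \leq m(\widetilde{G})$, and the leading odd-indexed coefficient $a_{2m(\widetilde{G})+1}$ is nonzero with sign governed precisely by $|\sigma(\widetilde{C_{q}})-q| \equiv 3 \pmod{4}$ on each cycle; the resulting extra sign change between $a_{2m(\widetilde{G})}$ and $a_{2m(\widetilde{G})+1}$ contributes an additional negative eigenvalue, pushing $n^{-}(\widetilde{G})$ up to $m(\widetilde{G})+c(\widetilde{G})$, in direct parallel with the argument used for Theorem \ref{T39}.
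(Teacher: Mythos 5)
Your opening claim -- that the theorem is proved by transcribing the appendix proof of Theorem \ref{T39} with $|\sigma(\widetilde{C_{q}})-q|\equiv 1\ (\mathrm{mod}\ 4)$ replaced by $\equiv 3\ (\mathrm{mod}\ 4)$ -- is exactly the paper's intention, but the argument you then actually sketch abandons that template at the decisive points, and the substitutes have real gaps. For sufficiency the paper's route is an induction on the order: condition (iv) and Lemma \ref{A1} give a pendant vertex whose quasi-pendant neighbour lies off every cycle; deleting the pair lowers $n^{-}$ by exactly one (Lemma \ref{L29}) and $m$ by exactly one (Lemma \ref{A3}) while preserving $c$ and condition (iv), and the base case (disjoint unions of trees and cycles) is settled by Lemmas \ref{TREE}, \ref{COMPON} and Theorem \ref{T31}; equality is thus maintained exactly at every step. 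Your induction on $c(\widetilde{G})$, deleting a cycle vertex left unsaturated by a matching avoiding $\mathscr{F}(\widetilde{G})$, only yields $n^{-}(\widetilde{G})\in\{m(\widetilde{G})+c(\widetilde{G})-1,\ m(\widetilde{G})+c(\widetilde{G})\}$, as you concede, and the tool you invoke to exclude the smaller value -- an extension of Lemmas \ref{L32}--\ref{L34} to every $\widetilde{G}\in\mathscr{G}$ with pairwise disjoint odd cycles -- is nowhere in the paper and is not a routine transcription: once $c(\widetilde{G})\geq 2$ a real elementary subgraph may contain any subset of the cycles, nonzero coefficients persist up to $a_{2m(\widetilde{G})+c(\widetilde{G})}$, and via Lemma \ref{L21} you would have to control the signs of all of $a_{2m+1},\dots,a_{2m+c}$ (each a signed sum over subgraphs using several cycles), not a single extra coefficient. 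That is precisely the computation the pendant-deletion induction is designed to avoid. Moreover your mechanism is inverted: an extra sign change would produce an extra \emph{positive} eigenvalue; in the $\equiv 3\ (\mathrm{mod}\ 4)$ case the relevant fact (already visible in the unicyclic discussion before Theorem \ref{UNI}) is that $a_{2m+1}$ has the \emph{same} sign as $a_{2m}$, so the rank grows without a new sign change and the surplus eigenvalue falls on the negative side.

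The necessity direction also has a hole. After applying the $n^{-}$ analogue of Corollary \ref{C37} and induction to $\widetilde{G}-v$ for a cycle vertex $v$, you obtain (iii)/(iv) only for $\widetilde{G}-v$ and never explain how (iii) or (iv) for $\widetilde{G}$ itself follows; it does not follow formally, because deleting $v$ destroys the cycle through $v$, so $\mathscr{F}(\widetilde{G}-v)$, $T_{\widetilde{G}-v}$ and $[T_{\widetilde{G}-v}]$ are built from a different cycle structure, and a maximum matching of $\widetilde{G}-v$ avoiding $\mathscr{F}(\widetilde{G}-v)$ may use edges of $\mathscr{F}(\widetilde{G})$ incident to the destroyed cycle. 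The paper's proofs of Theorems \ref{T39} and \ref{T312}, which this theorem is meant to mirror, close the induction differently: if $\widetilde{G}$ has a pendant vertex, its quasi-pendant neighbour is off-cycle by Corollary \ref{C37} and one deletes the pair using Lemmas \ref{L29} and \ref{A3}; if $\widetilde{G}$ has no pendant vertex, one analyses a pendant cycle at its attachment vertex and reconstructs a maximum matching of $\widetilde{G}$ avoiding $\mathscr{F}(\widetilde{G})$ via Lemma \ref{A2} (with Lemmas \ref{A4}--\ref{A6} in the Theorem \ref{T312} setting). To repair your write-up you should restore this structure rather than lean on the unproved multi-cycle coefficient lemma; your one genuinely correct shortcut is the observation that (iii) and (iv) are equivalent under (i)--(ii) by the remark preceding the alternative form of Theorem \ref{T39}, and a minor extra remark (Lemma \ref{COMPON}) is needed wherever the deletion disconnects the graph.
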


\begin{theorem}\label{T312}
Let $\widetilde{G}$ be a mixed connected graph. Then $p^{+}(\widetilde{G})= m(\widetilde{G})-c(\widetilde{G})$  (or $n^{-}(\widetilde{G})= m(\widetilde{G})-c(\widetilde{G})$) if and only if the following three conditions all hold.

{\em(i)} Any two cycles of $\widetilde{G}$ share no common vertices;

{\em(ii)} For each cycle $\widetilde{C_{q}}$ of $\widetilde{G}$, $q$ is even, $\sigma(\widetilde{C_{q}})$ is even and $|\sigma(\widetilde{C_{q}})-q| \equiv 0 \ (\rm{mod} \ 4)  $;

{\em(iii)} $m(T_{\widetilde{G}})=m([T_{\widetilde{G}}])$.
\end{theorem}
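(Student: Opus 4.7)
The plan is to prove the $p^{+}$ case by induction on $c(\widetilde{G})$; the $n^{-}$ case then follows by the same argument, since the first case of Theorem \ref{UNI} (yielding $m(\widetilde{G})-1$) is symmetric in $p^{+}$ and $n^{-}$. The base case $c(\widetilde{G})=0$ is immediate from Lemma \ref{TREE}, with (i) and (ii) vacuous and (iii) holding since $T_{\widetilde{G}}=[T_{\widetilde{G}}]=\widetilde{G}$. The base case $c(\widetilde{G})=1$ reduces to the first case of Theorem \ref{UNI} once one checks (by a short matching argument that uses $q$ even) that $m(T_{\widetilde{G}})=m([T_{\widetilde{G}}])$ is equivalent to the condition ``no maximum matching of $\widetilde{G}$ contains an edge incident to the cycle''.

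For the inductive step $c(\widetilde{G})\geq 2$, fix a cycle $\widetilde{C}$ of $\widetilde{G}$, pick a vertex $v\in V(\widetilde{C})$, and set $\widetilde{H}=\widetilde{G}-v$. In the ``only if'' direction, Corollary \ref{C38}(vii) gives (i), and Corollary \ref{C38}(i), (iii), (iv) yield $p^{+}(\widetilde{H})=m(\widetilde{H})-c(\widetilde{H})$ with $c(\widetilde{H})=c(\widetilde{G})-1$. Applying the induction hypothesis componentwise via Lemma \ref{COMPON} shows each component of $\widetilde{H}$ satisfies (i)--(iii). In particular every cycle of $\widetilde{G}$ other than $\widetilde{C}$ satisfies (ii); letting $v$ range over different cycles recovers (ii) for $\widetilde{C}$ itself. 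Condition (iii) for $\widetilde{G}$ is then deduced by comparing $T_{\widetilde{G}}$ with $T_{\widetilde{H}}$, which replaces the cyclic vertex for $\widetilde{C}$ by the path $\widetilde{C}-v$, together with $m(\widetilde{H})=m(\widetilde{G})-1$.

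For the ``if'' direction, hypotheses (ii) and (iii) let one choose $v\in V(\widetilde{C})$ that is saturated by some maximum matching of $\widetilde{G}$ (take a perfect matching of $\widetilde{C}$, which exists since $q$ is even, and extend it by a maximum matching of $[T_{\widetilde{G}}]$) and such that $\widetilde{H}$ still inherits (i)--(iii). By the induction hypothesis, $p^{+}(\widetilde{H})=m(\widetilde{H})-c(\widetilde{H})=m(\widetilde{G})-c(\widetilde{G})$. Theorem \ref{T36} gives $p^{+}(\widetilde{G})\geq m(\widetilde{G})-c(\widetilde{G})$ and Lemma \ref{DV} gives $p^{+}(\widetilde{G})\leq p^{+}(\widetilde{H})+1$; to rule out $p^{+}(\widetilde{G})=p^{+}(\widetilde{H})+1$, I would compare the sign patterns of the even-indexed coefficients $a_{2i}$ of $\phi(\widetilde{G},\lambda)$ and $\phi(\widetilde{H},\lambda)$ via Lemmas \ref{L21} and \ref{L23}, using hypothesis (ii) to show that the extra real elementary subgraphs created by reinstating $v$ do not introduce a new sign change.

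The main obstacle is tracking condition (iii) through the induction. Because $[T_{\widetilde{G}}]$ is built by contracting cycles, the effect of removing a single cycle vertex on the equality $m(T_{\widetilde{G}})=m([T_{\widetilde{G}}])$ is delicate, and the argument requires a transfer lemma: under (i) and (ii), this matching equality for $\widetilde{G}$ is equivalent to the analogous equality for $\widetilde{H}$. Proving this equivalence in both directions, and verifying that the chosen $v$ preserves it, is where most of the technical work will lie.
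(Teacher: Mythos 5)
There is a genuine gap, and it sits exactly at the hard points of the theorem. In your sufficiency step you delete a vertex $v$ of a cycle and then must rule out $p^{+}(\widetilde{G})=p^{+}(\widetilde{G}-v)+1$; interlacing (Lemma \ref{DV}) only gives a two-value range, and your plan to resolve it by ``comparing sign patterns of the $a_{2i}$ via Lemmas \ref{L21} and \ref{L23}'' is not an argument but a restatement of the problem: for a graph with several (even vertex-disjoint) cycles the coefficients receive contributions from real elementary subgraphs containing any subset of the cycles, and controlling their signs is precisely the analysis the paper only carries out for unicyclic graphs (Lemmas \ref{L32}--\ref{L34}). The paper's proof avoids this entirely: condition (iii) together with Lemma \ref{A1} guarantees a pendant vertex whose quasi-pendant neighbour lies off all cycles, and deleting that pair gives exact bookkeeping ($p^{+}$ drops by $1$ by Lemma \ref{L29}, $m$ drops by $1$ by Lemma \ref{A3}, $c$ is unchanged), so the induction on the order closes without any coefficient analysis. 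Your step also needs $m(\widetilde{G}-v)=m(\widetilde{G})-1$, which requires that \emph{every} maximum matching covers $v$, not merely that some maximum matching saturates it.

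In the necessity direction you yourself flag the second gap: the ``transfer lemma'' asserting that, under (i) and (ii), $m(T_{\widetilde{G}})=m([T_{\widetilde{G}}])$ is equivalent to the corresponding equality for $\widetilde{G}-v$ is left unproved, and it is genuinely delicate, because in $\widetilde{G}-v$ the broken cycle $\widetilde{C}-v$ consists of non-cyclic vertices and therefore enters $[T_{\widetilde{G}-v}]$, so the two equalities are not related by simple bookkeeping. The paper does not attempt such a transfer: it splits into the case where $\widetilde{G}$ has a pendant vertex (again removing the pendant/quasi-pendant pair, with Corollary \ref{C38} ensuring the quasi-pendant vertex is off all cycles) and the case of a pendant cycle, where it passes to $\widetilde{K}+u$, applies induction there, and then computes $m(\widetilde{G})$ and $m(\widetilde{H})$ explicitly via Lemmas \ref{A4}, \ref{A5} and \ref{A6} to recover $m(T_{\widetilde{G}})=m([T_{\widetilde{G}}])$. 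Until you either prove your transfer lemma and supply an actual sign-change argument for reinstating a cycle vertex, or switch to the pendant-edge/pendant-cycle decomposition, the proposal is a plan rather than a proof.
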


\section*{Acknowledgments}

This work was supported by the National Natural Science Foundation of China
(No. 11731002), the Fundamental Research Funds for the Central Universities (No. 2016JBM071) and the $111$ Project of China.

\section*{Appendix: Proof of Theorem \ref{T39} }

\begin{lemma}\label{A1}{\rm\cite{FANYZ}}
Let $G \in \mathscr{G}$. If  $m(T_{G})=m([T_{G}])$, then $T_{G}$  contains a non-cyclic pendants vertex.
If $v$ is the vertex in $T_{G}$ adjacent to such pendant vertex, then $v$ is also non-cyclic. In other words, $G$ contains at least one pendant vertex, and any quasi-pendant
vertex of $G$ lies outside of cycles.
\end{lemma}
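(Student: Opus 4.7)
The plan is to prove the contrapositive: if $T_G$ has no non-cyclic pendant vertex whose unique $T_G$-neighbor is also non-cyclic, then $m(T_G) > m([T_G])$. Given any maximum matching $M^*$ of $[T_G]$, every cyclic vertex of $T_G$ is $M^*$-unmatched, since by definition $M^*$ uses only edges of $[T_G]$. I aim to produce an $M^*$-augmenting path in $T_G$, witnessing $|M^*| < m(T_G)$ and hence $m([T_G]) < m(T_G)$.

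For the setup, since $G \in \mathscr{G}$ is not a disjoint union of cycles, the non-cyclic vertex set is non-empty and $\mathscr{F}(G) \neq \emptyset$, so $T_G$ contains at least one edge between a cyclic and a non-cyclic vertex; in particular $m(T_G) \geq 1$. If $[T_G]$ were edgeless then $m([T_G]) = 0 < m(T_G)$ is immediate; otherwise the forest $[T_G]$ has a pendant vertex $p$ with unique $[T_G]$-neighbor $u$, both non-cyclic. The contrapositive hypothesis prevents $p$ from being a pendant of $T_G$ (else the pair $(p,u)$ would realize the forbidden configuration), so $p$ has at least one further $T_G$-neighbor $c$, necessarily cyclic.

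Next I construct the augmenting path beginning at $c$. The edge $cp$ is non-$M^*$ and $c$ is unmatched; if $p$ were also unmatched, then $M^* \cup \{cp\}$ would already be a larger matching in $T_G$, and we would be done, so $p$ must be matched, forcing $pu \in M^*$. Starting from $c - p - u$, I extend by an alternating walk: at each matched vertex I choose any non-$M^*$ neighbor as the next step. Since $T_G$ is a forest and alternation precludes immediate backtracking, the walk is a simple path and terminates in finitely many steps. It can terminate in only two ways: (a) at an $M^*$-unmatched vertex reached via a non-$M^*$ edge, which supplies the desired augmenting path; or (b) at a matched non-cyclic vertex $x$, reached via its $M^*$-edge, that has no other $T_G$-neighbor at all. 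In case (b), the unique $T_G$-neighbor of $x$ is its $M^*$-partner, which is non-cyclic (since $M^* \subseteq E([T_G])$), making $x$ itself a non-cyclic pendant of $T_G$ with non-cyclic neighbor, contradicting the contrapositive hypothesis. Only outcome (a) is consistent, yielding $m(T_G) > m([T_G])$.

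The main delicate point is the apparent non-determinism in the alternating walk: at a matched vertex there can be several non-$M^*$ continuations. The forest structure of $T_G$ defuses this worry, since any depth-first alternating extension stays on a simple path and the hypothesis forbids termination of type (b), so any choice of extension eventually terminates at an unmatched vertex. An equivalent and slightly cleaner formulation is to build the full alternating subtree rooted at $c$ in $T_G$ and invoke Berge's augmenting-path theorem: the absence of the forbidden pendant forces this subtree to contain an $M^*$-exposed vertex distinct from $c$, giving the required augmenting path.
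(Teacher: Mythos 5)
The paper does not prove this lemma at all --- it is imported from \cite{FANYZ} with a citation only --- so your argument has to stand on its own, and for the main (existential) assertion it does. The contrapositive setup is sound: for a maximum matching $M^*$ of $[T_G]$ every cyclic vertex of $T_G$ is $M^*$-exposed; a pendant vertex $p$ of the forest $[T_G]$ which is not pendant in $T_G$ must pick up a cyclic neighbour $c$; and the alternating walk $c-p-u-\cdots$ is a simple path because $T_G$ is a forest and the alternation forbids immediate backtracking. Your termination analysis is the key point and it is right: any cyclic vertex met along the walk is exposed and closes case (a), and the only other dead end is a matched degree-one vertex of $T_G$ whose unique neighbour is its (necessarily non-cyclic) $M^*$-partner, which is exactly the configuration the contrapositive hypothesis forbids. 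So an augmenting path exists and $m(T_G)>m([T_G])$.

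Two gaps, one of substance and one of hygiene. First, the lemma also makes the \emph{universal} claim that \emph{any} quasi-pendant vertex of $G$ lies outside of cycles; your contrapositive only delivers the existence of one pendant vertex whose quasi-pendant neighbour is non-cyclic, and this is the form actually invoked later in the paper, but it does not give the statement as written. The missing piece is a one-liner in the same spirit: if a quasi-pendant vertex $v$ on a cycle $C$ had a pendant neighbour $w$, then $w$ is isolated in $[T_G]$ and the contracted vertex $c_C$ lies outside $[T_G]$, so both are $M^*$-exposed and $M^*\cup\{c_C w\}$ is a matching of $T_G$ of size $m([T_G])+1$, contradicting $m(T_G)=m([T_G])$. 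You should append this. Second, your claim that $\mathscr{F}(G)\neq\emptyset$ (used to secure $m(T_G)\geq 1$ when $[T_G]$ is edgeless) silently assumes $G$ is connected; the definition of $\mathscr{G}$ in this paper does not require that, and for something like a cycle plus an isolated vertex the lemma itself fails, so this is really a defect of the quoted statement, but you should state explicitly that you work on a component containing both a cycle and a non-cyclic vertex.
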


\begin{lemma}\label{A2}{\rm\cite{RULA}}
Let $G$ be a graph with at least one cycle. Suppose that all cycles of $G$
are pairwise-disjoint and each cycle is odd, then $m(T_{G})=m([T_{G}])$ if and only if
$m(G)=\sum_{C \in \mathscr{L}(G)}m(C)+m([T_{G}])$, where $\mathscr{L}(G)$ denotes the set of all cycles in $G$.
\end{lemma}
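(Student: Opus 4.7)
The plan is to prove the two implications separately: the easier $(\Leftarrow)$ follows from a general lifting inequality combined with a sandwich, while $(\Rightarrow)$ is proved by induction on $|V(G)|$ with Lemma~\ref{A1} as the structural engine. For $(\Leftarrow)$, the key is the inequality
\[
m(G) \;\geq\; m(T_G) \;+\; \sum_{C\in\mathscr{L}(G)} m(C),
\]
which I would establish by a lifting construction. Given any matching $\tilde M$ of $T_G$, keep each edge of $\tilde M$ lying in $E([T_G])$ as an edge of $G$, and replace each edge of $\tilde M$ incident to a cyclic vertex $w_C$ by the $\mathscr{F}(G)$-edge from which it was contracted; this uses at most one vertex $v_C$ of each cycle $C$. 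For every cycle $C$, I then adjoin a matching of size $m(C)=(q_C-1)/2$ supported either on $V(C)\setminus\{v_C\}$ (a path of even order $q_C-1$) or on $V(C)$ itself if $w_C$ is $\tilde M$-unmatched. The result is a matching of $G$ of size $|\tilde M|+\sum_C m(C)$, proving the inequality. Combined with the trivial $m(T_G)\geq m([T_G])$ (since $[T_G]$ is an induced subgraph of $T_G$), the hypothesis $m(G)=\sum_C m(C)+m([T_G])$ forces $m(T_G)\leq m([T_G])$, and hence equality.

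For $(\Rightarrow)$, the induction base is when $G$ is the disjoint union of its cycles: then $T_G$ consists only of isolated cyclic vertices, $[T_G]$ is empty, both $m(T_G)$ and $m([T_G])$ vanish, and $m(G)=\sum_C m(C)=\sum_C m(C)+m([T_G])$, so the identity is immediate. For the inductive step, assume $G$ has at least one non-cyclic vertex, so $G\in\mathscr{G}$; Lemma~\ref{A1} then supplies a pendant vertex $v$ of $G$ whose unique neighbor $u$ lies outside every cycle (and $v$ itself is automatically non-cyclic, since a pendant cannot sit on a cycle of length $\geq 3$). Set $G'=G-\{u,v\}$. Because $u$ and $v$ are both non-cyclic, the cycles of $G'$ coincide with those of $G$, and contraction commutes with the deletion, giving $T_{G'}=T_G-\{u,v\}$ and $[T_{G'}]=[T_G]-\{u,v\}$. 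Since $v$ remains a pendant vertex with unique neighbor $u$ in each of $G$, $T_G$, and $[T_G]$, the standard identity $m(H)=m(H-\{u,v\})+1$ applies in all three, so each of the three matching numbers drops by exactly one. The hypothesis $m(T_G)=m([T_G])$ therefore transfers to $m(T_{G'})=m([T_{G'}])$; applying the inductive hypothesis to $G'$ yields $m(G')=\sum_C m(C)+m([T_{G'}])$, and adding $1$ to both sides gives $m(G)=\sum_C m(C)+m([T_G])$.

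The main obstacle, to my mind, is securing the structural guarantee used in the inductive step. The whole reduction works only because Lemma~\ref{A1} certifies that the quasi-pendant $u$ is non-cyclic, so that deleting $\{u,v\}$ preserves the cycle set $\mathscr{L}(G)$ and leaves the contracted forest structure intact; if instead $u$ happened to lie on a cycle, that cycle would be shortened or destroyed, $\mathscr{L}(G)$ would change, and the three simultaneous drops of matching numbers $m(G)$, $m(T_G)$, $m([T_G])$ would fail to line up. Once Lemma~\ref{A1} is in hand, however, the remainder of the argument is routine bookkeeping.
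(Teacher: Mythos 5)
The paper does not actually prove this statement: Lemma~\ref{A2} is imported verbatim from \cite{RULA} and used as a black box, so there is no in-paper proof to compare against. Your self-contained argument is, as far as I can check, correct, and both halves do real work. The $(\Leftarrow)$ direction rests on the lifting inequality $m(G)\geq m(T_G)+\sum_{C}m(C)$, and you correctly isolate where oddness enters: after a lifted matching of $T_G$ occupies at most one vertex $v_C$ per cycle, the path $C-v_C$ has even order and hence a perfect matching of size $m(C)$, so the cycle contributions can always be added at full strength; combined with the trivial $m(T_G)\geq m([T_G])$ this squeezes the hypothesis into the desired equality. The $(\Rightarrow)$ direction is the standard pendant/quasi-pendant reduction: Lemma~\ref{A1} guarantees the quasi-pendant $u$ is non-cyclic, so deleting $\{u,v\}$ leaves $\mathscr{L}(G)$ intact and decrements $m(G)$, $m(T_G)$ and $m([T_G])$ by exactly one each, which is precisely what the induction needs. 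One small caveat you should make explicit: after deleting $\{u,v\}$ the graph $G'$ may become disconnected and may acquire isolated non-cyclic vertices, in which case Lemma~\ref{A1} as literally quoted (``$T_{G'}$ contains a non-cyclic pendant vertex'') can fail even though $m(T_{G'})=m([T_{G'}])$ holds --- e.g.\ $G'=C_3\sqcup K_1$. This is an artifact of Lemma~\ref{A1} being stated for connected graphs; it is repaired by working component-wise (all three matching numbers are additive over components and isolated vertices contribute nothing), but as written your induction would stall on such a $G'$. With that one-line patch the proof is complete.
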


\begin{lemma}\label{A3}{\rm\cite{GONG}}
Let $G$ be a graph with a quasi-pendant vertex $v$. Then $m(G-v)=m(G)-1$.
\end{lemma}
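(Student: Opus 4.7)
The plan is to establish the equality $m(G-v)=m(G)-1$ by proving the two inequalities $m(G-v)\geq m(G)-1$ and $m(G-v)\leq m(G)-1$ separately. The entire argument hinges on one structural observation: since $v$ is a quasi-pendant vertex, there exists a pendant vertex $u$ of $G$ adjacent to $v$, and because $u$ has degree one with unique neighbor $v$, the vertex $u$ becomes isolated in $G-v$. This makes $u$ unavailable for any matching in $G-v$, which is exactly what forces a maximum matching of $G-v$ to be extendable into $G$ via the edge $uv$.

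For the direction $m(G-v)\geq m(G)-1$, I would start with any maximum matching $M$ of $G$. Since $M$ is a matching, at most one edge of $M$ is incident to $v$; deleting that edge (if it is present in $M$) produces a matching of $G-v$ of cardinality at least $|M|-1=m(G)-1$. This inequality is the standard fact that removing one vertex drops the matching number by at most one and requires no property of $v$ beyond being a single vertex.

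For the reverse direction $m(G-v)\leq m(G)-1$, I would take a maximum matching $M'$ of $G-v$ and use the observation above: $u$ is isolated in $G-v$, so no edge of $M'$ covers $u$. Consequently $M'\cup\{uv\}$ is a well-defined matching of $G$ (the added edge $uv$ meets no edge of $M'$ at $u$ because $u$ is $M'$-unsaturated, and it cannot meet any edge of $M'$ at $v$ because $v\notin V(G-v)$). Hence $m(G)\geq |M'|+1=m(G-v)+1$, which is equivalent to the desired bound. Combining the two inequalities yields $m(G-v)=m(G)-1$.

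The main obstacle here is essentially trivial: the only non-formal step is recognizing that the pendant neighbor of a quasi-pendant vertex is isolated after deletion, and therefore must be unsaturated in every matching of $G-v$. No appeal to Berge's augmenting-path theorem or Gallai--Edmonds decomposition is needed, and the proof consists of two short matching-manipulation arguments.
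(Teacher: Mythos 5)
Your proof is correct: the inequality $m(G-v)\geq m(G)-1$ is the standard vertex-deletion bound, and the key observation that the pendant neighbour $u$ of $v$ becomes isolated in $G-v$, hence is unsaturated by any matching of $G-v$, lets you augment by the edge $uv$ to get $m(G)\geq m(G-v)+1$. The paper itself offers no proof of this lemma --- it is quoted from the reference [GONG] --- so there is nothing to compare against, but your elementary two-inequality argument is exactly the expected one and is complete as written.
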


\begin{lemma}\label{A6}{\rm\cite{FANYZ}}
Let $G \in \mathscr{G}$. If there exists a maximum matching $M(G)$ of $G$ such that
$M(G) \cap \mathscr{F}(G) =\emptyset$, then $m(G)=\sum_{C \in \mathscr{L}(G)}m(C)+m([T_{G}])$, where $\mathscr{L}(G)$ denotes the set of all cycles in $G$. If addition, each cycles of $G$ has odd length, then $m(T_{G})=m([T_{G}])$.
\end{lemma}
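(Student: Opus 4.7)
The plan is to establish the two assertions in order, with the first serving as the engine for the second. Throughout, abbreviate $\mathscr{L}(G)=\{C_{1},\dots,C_{t}\}$ and recall that the cycles of $G\in\mathscr{G}$ are pairwise vertex-disjoint, so no chord can exist on any cycle (a chord would create a second cycle sharing vertices with the first).

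For the equality $m(G)=\sum_{C\in\mathscr{L}(G)}m(C)+m([T_{G}])$, I would prove both inequalities directly. The upper bound uses the hypothesis: since $M(G)\cap\mathscr{F}(G)=\emptyset$, every edge of $M(G)$ either has both endpoints inside a single cycle $C$ (and is then itself an edge of $C$, by the no-chord observation) or has both endpoints in $V([T_{G}])$. Thus $M(G)$ decomposes as a disjoint union of matchings in the individual cycles together with one matching in $[T_{G}]$, giving $m(G)\le\sum_{C}m(C)+m([T_{G}])$. For the lower bound, the union of a maximum matching of each cycle with a maximum matching of $[T_{G}]$ is itself a matching in $G$, because the vertex sets involved are pairwise disjoint.

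For $m(T_{G})=m([T_{G}])$ under the extra odd-length hypothesis, only $m(T_{G})\le m([T_{G}])$ needs proof. Let $M^{\star}$ be a maximum matching of $T_{G}$ and split $M^{\star}=A\sqcup B$, where $A$ collects the edges with both endpoints in $V([T_{G}])$ and $B$ the edges incident to at least one cyclic vertex. For every edge of $B$ pick a representative $G$-edge whose endpoints lie in the appropriate cycles or in $V([T_{G}])$; this designates, for each cycle $C_{j}$ whose contracted vertex is matched by $M^{\star}$, a single ``used'' vertex $w_{j}\in V(C_{j})$. Augment this collection by a maximum matching of $C_{j}-w_{j}=P_{q_{j}-1}$, of size $(q_{j}-1)/2=m(C_{j})$ because $q_{j}$ is odd, for each such $C_{j}$; and by a maximum matching of $C_{j}$ itself (also of size $m(C_{j})$) for each unused cycle. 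The resulting family is a matching in $G$ of size $|A|+|B|+\sum_{C}m(C)=|M^{\star}|+\sum_{C}m(C)$. Comparing with the first assertion yields $|M^{\star}|+\sum_{C}m(C)\le m(G)=m([T_{G}])+\sum_{C}m(C)$, hence $|M^{\star}|\le m([T_{G}])$.

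The principal obstacle I anticipate is the bookkeeping needed to verify that the lifted family is genuinely a matching in $G$: the non-cyclic endpoints of the chosen representative edges are pairwise distinct because $M^{\star}$ is a matching; the ``used'' vertices $w_{j}$ lie in pairwise disjoint cycles and are therefore automatically distinct from each other and from $V([T_{G}])$; and the auxiliary matchings on $C_{j}-w_{j}$ use only interior cycle vertices, avoiding $w_{j}$ by construction. The odd-length hypothesis is precisely what forces $m(P_{q_{j}-1})=m(C_{j})$, so ``giving up'' the vertex $w_{j}$ to a representative $\mathscr{F}$-edge costs nothing inside $C_{j}$; without this, the count $|M^{\star}|+\sum_{C}m(C)$ would fail to match the formula from the first assertion.
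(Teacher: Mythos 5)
Your argument is correct, but note that the paper offers no proof of this lemma at all: it is imported verbatim from Fan and Wang \cite{FANYZ}, so there is no in-paper argument to compare yours against. Taken on its own terms, your proof is sound. The first assertion follows exactly as you say: since the cycles of a graph in $\mathscr{G}$ are pairwise vertex-disjoint, no cycle has a chord, so a maximum matching avoiding $\mathscr{F}(G)$ splits cleanly into matchings inside the individual cycles and a matching inside $[T_{G}]$, while the reverse inequality is the trivial disjoint union. Your lifting argument for the second assertion is also correct, and the one point that genuinely needs care --- that the lifted family is a matching in $G$ --- is handled properly: each cyclic vertex of $T_{G}$ is covered by at most one edge of $M^{\star}$, so each cycle receives at most one ``used'' vertex $w_{j}$, and the identity $m(C_{j}-w_{j})=m(C_{j})$ for odd $q_{j}$ is exactly what makes the count close; comparison with the first assertion then gives $m(T_{G})\leq m([T_{G}])$, and the reverse inequality is immediate since $[T_{G}]$ is an induced subgraph of $T_{G}$. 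One small point worth making explicit: an edge of $G$ joining two \emph{distinct} cycles also belongs to $\mathscr{F}(G)$ (one endpoint lies on a cycle, the other lies outside that cycle), since your dichotomy ``both endpoints in a single cycle or both in $V([T_{G}])$'' silently relies on excluding that case.
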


\begin{lemma}\label{A4}{\rm\cite{FANYZ}}
Let $K$ be a graph such that any two cycles share no common vertices. Let
$G$ be a graph obtained from $K$ and a cycle $C_{s}$ (disjoint to $K$) by adding an edge between
a vertex $x$ of $C_{s}$ and a vertex $y$ of $K$. If $p(G)=m(G)-c(G)$, then

(i) $s$ is a multiple of 4;

(ii) the edge $xy$ does not belong to any maximum matching of $G$;

(iii) each maximum matching of $K$ covers $y$;

(iv) $m(K+x)=m(K)$;

(v) $m(G)=m(C_{s})+m(K)=m(C_{s})+m(K+x)$.
\end{lemma}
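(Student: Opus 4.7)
The plan is to reduce the problem to a smaller instance of the same structural type and then induct. First, apply Corollary~\ref{C38} to the vertex $x\in V(C_s)$, which lies on a cycle of $G$; since $xy$ is the only edge between $C_s$ and $K$, the deletion splits $G$ into the disjoint union $G-x = P_{s-1}\sqcup K$. Combining Corollary~\ref{C38} with Lemma~\ref{COMPON} and Lemma~\ref{TREE} (so that $p(P_{s-1})=m(P_{s-1})$) yields the identities
\[
p(K) = m(K) - c(K),\qquad m(G) = m(P_{s-1}) + m(K) + 1,\qquad c(G) = c(K)+1,
\]
so $K$ itself attains the lower bound of Theorem~\ref{T36}.

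Next, perform a case analysis on whether a maximum matching $M$ of $G$ uses the bridge $xy$. Case A (not using $xy$) gives $|M|\le m(C_s)+m(K)$, while Case B (using $xy$) gives $|M|\le 1+m(P_{s-1})+m(K-y)$. Comparing with $m(G)=m(P_{s-1})+m(K)+1$, two things emerge: if $s$ is odd, then $m(P_{s-1})=m(C_s)$ forces Case B to be the unique way of attaining $m(G)$, so every maximum matching uses $xy$ and $m(K-y)=m(K)$; if $s$ is even, then $m(P_{s-1})=m(C_s)-1$ allows Case A to be optimal, and at this point conclusions (ii)--(v) all become logically equivalent to the single statement that every maximum matching of $K$ covers $y$, namely (iii). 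In particular (iii) $\Leftrightarrow$ (iv) in this case, and together with $s$ even they imply (v).

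The remaining steps establish (i) and (iii) by induction on $|V(K)|$. The base case $|V(K)|=1$ makes $G$ a cycle with one pendant, and Lemma~\ref{L24} shows directly that the hypothesis $p(G)=m(G)-c(G)$ never holds for any value of $s$, so the lemma is vacuous. For the inductive step, one seeks a vertex $w\in V(K)\setminus\{y\}$ whose deletion preserves the structural form of the lemma. If $K$ has a pendant $u\ne y$ (with quasi-pendant $u'\ne y$), then Lemma~\ref{L29} gives $p(G-\{u,u'\})=p(G)-1$; coupled with $m(G-\{u,u'\})=m(G)-1$ (by Lemma~\ref{A3}) and $c(G-\{u,u'\})=c(G)$, the graph $G-\{u,u'\}$ still attains the lower bound. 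If $K$ has no such pendant, then $K$ must contain a cycle, so one picks $z\in V(K)\setminus\{y\}$ on a cycle of $K$ (which is also a cycle of $G$) and applies Corollary~\ref{C38} to $z$ in $G$. In either subcase, the smaller graph retains $C_s$, the bridge $xy$, the vertex-disjoint-cycle hypothesis on the remainder, and the lower-bound equality; the inductive hypothesis yields (i)--(v) for it, and the conclusions transfer back to $G$ because $C_s$ is unchanged and the matching/cycle data evolve predictably under the chosen deletion.

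The main obstacle will be the transfer of (iii) back to $K$: from a hypothetical maximum matching of $K$ avoiding $y$ one must manufacture, by edge exchanges through the removed vertices, a maximum matching of $K-\{u,u'\}$ or $K-z$ also avoiding $y$, contradicting the inductive version of (iii). The pendant-deletion case relies on the standard argument that a maximum matching may be chosen to saturate a given pendant via its unique edge; the cycle-deletion case needs additional care precisely when $z$ is adjacent to $y$, and the degenerate configuration where every pendant of $K$ is adjacent only to $y$ (e.g., $K$ a star centered at $y$) must be handled separately.
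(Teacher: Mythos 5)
First, a remark on the comparison itself: the paper does not prove this lemma at all --- it is imported verbatim from \cite{FANYZ} --- so there is no in-paper proof to measure your attempt against. On its own terms, your opening reductions are sound: applying Corollary \ref{C38} at the cycle vertex $x$ does yield $p(K)=m(K)-c(K)$ and $m(G)=m(P_{s-1})+m(K)+1$, and your parity case analysis correctly shows that once $s$ is known to be even, conclusions (ii), (iii) and (iv) are mutually equivalent and (v) follows from them.

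The genuine gap is in the induction on $|V(K)|$: its base is misplaced, and the case you defer is where all the content of the lemma lives. Your declared base case $|V(K)|=1$ is vacuous (correctly observed), and your inductive dichotomy --- either $K$ has a pendant $u\neq y$ with quasi-pendant $u'\neq y$, or ``$K$ must contain a cycle'' --- is not exhaustive. When $K$ is a star centered at $y$ (already $K=K_{2}$, i.e. $G=C_{s}$ with a pendant path of length two attached at $x$), neither branch applies, yet this is precisely a configuration in which the hypothesis $p(G)=m(G)-c(G)$ does hold (for $s\equiv 0\pmod 4$) and the lemma has nontrivial content. You flag this as a ``degenerate configuration \dots handled separately'' but never handle it; since every chain of your reductions terminates either in the vacuous case or in one of these unhandled configurations, the argument as written never actually establishes conclusion (i) for any nonvacuous instance, nor does it ever rule out $s$ odd or $s\equiv 2\pmod 4$ --- your paragraph on the odd case records what would follow but derives no contradiction from it. The repair is to take $c(K)=0$ as the true base case: there $G$ is unicyclic, and Lemma \ref{L24} (equivalently Theorem \ref{UNI}) forces $s\equiv 0\pmod 4$ together with the statement that no maximum matching of $G$ uses an edge incident to the cycle, which is exactly (ii) and hence, by your own equivalences, (iii)--(v). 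That step is the crux of the lemma, not an afterthought. Once it is in place, your transfer arguments for (iii) --- via the standard pendant exchange in the pendant/quasi-pendant deletion, and via $m(K-z)=m(K)-1$ (from Corollary \ref{C38} applied to $K$, which you have shown attains the lower bound) in the cycle-vertex deletion --- do go through, and the overall scheme becomes a correct, if more laborious, alternative to a direct reduction to the unicyclic theorem.
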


\begin{lemma}\label{A5}{\rm\cite{FANYZ}}
Let $G \in \mathscr{G}$. If  $p(G)=m(G)-c(G)$, then for any maximum matching $M(G)$ of $G$, one has $M(G) \cap \mathscr{F}(\widetilde{G}) =\emptyset$.
\end{lemma}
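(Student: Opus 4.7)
The plan is to argue by contradiction and induct on $c(G)$. Suppose some maximum matching $M$ of $G$ contains an edge $e = xy \in \mathscr{F}(G)$, with $x \in V(C_q)$ for some cycle $C_q$ of $G$ and $y \notin V(C_q)$.

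For the base case $c(G) = 1$, $G$ is unicyclic, and Lemma \ref{L24} settles it immediately: among the four mutually exclusive cases of that lemma, only the first yields $p^{+}(G) = m(G) - 1 = m(G) - c(G)$, and that case explicitly asserts $M' \cap \mathscr{F}(G) = \emptyset$ for every maximum matching $M'$, directly contradicting $e \in M \cap \mathscr{F}(G)$.

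For the inductive step with $c(G) \geq 2$, the aim is to reduce to a proper subgraph $G' \subsetneq G$ still satisfying the hypotheses and still carrying an $\mathscr{F}$-edge in some maximum matching. Corollary \ref{C38}(vi) forces every quasi-pendant of $G$ to lie off all cycles, so Lemma \ref{L29} allows peeling pendant--quasi-pendant pairs $(u, w)$ without disturbing any cycle: each peel drops $p^{+}$ and $m$ by one and leaves $c$ unchanged, so the identity $p^{+} = m - c$ is preserved. The edge $e$ remains in some maximum matching of the peeled graph unless the deleted quasi-pendant coincides with $y$, in which case the alternating-matching swap $M \mapsto M - xw + uw$ either enlarges $M$ (impossible) or transports the problem to a competing maximum matching in which a different $\mathscr{F}$-edge can be tracked. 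After the peeling terminates, the resulting $T_{\cdot}$ tree has no non-cyclic leaves; combined with the vertex-disjointness of cycles in $\mathscr{G}$ --- which forces at most one bridge edge between any non-cyclic vertex and any cycle, and at most one edge between any two cycles (otherwise a new cycle sharing vertices with the given ones would arise) --- this means every cyclic leaf of the reduced tree is a pendant cycle $C_s$ joined to the rest by a single bridge edge.

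Apply Lemma \ref{A4} to this pendant cycle, with $K$ the complement. If $C_s = C_q$, the bridge edge coincides with $e$ and Lemma \ref{A4}(ii) directly contradicts $e \in M$. Otherwise, Lemma \ref{A4}(iii)--(v) decompose $M$ into maximum matchings of $C_s$ and $K$, while combining Lemma \ref{A4}(i) (the length of $C_s$ is a multiple of four) with the cycle inertia value from Lemma \ref{L25} yields $p^{+}(K) = m(K) - c(K)$. Since $C_q$ and $e$ both survive into $K$ with $e \in \mathscr{F}(K) \cap (M \cap E(K))$, the inductive hypothesis applied to $K$ (which has $c(K) = c(G) - 1$) delivers the contradiction.

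The main obstacle is the bookkeeping for the $\mathscr{F}$-edge $e$ during the peeling step: the sub-case $y = w$ forces the alternating-matching swap noted above, and if the peeling ultimately exits $\mathscr{G}$ by reducing everything to disjoint cycles, a direct contradiction argument that tracks where $e$ was lost is needed rather than an inductive one. A secondary delicate step is verifying $p^{+}(K) = m(K) - c(K)$ after the pendant-cycle excision; this requires pairing the matching identity $m(G) = m(C_s) + m(K)$ of Lemma \ref{A4}(v) with the specific value $p^{+}(C_s) = s/2 - 1$ valid for $s \equiv 0 \pmod{4}$ extracted from Lemma \ref{L25}.
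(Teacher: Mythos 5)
The paper never proves this statement---Lemma \ref{A5} is imported verbatim from Fan and Wang \cite{FANYZ}---so your argument has to stand entirely on its own, and as written it does not close. The skeleton is sensible (induction on $c(G)$, base case from Lemma \ref{L24}, pendant-pair peeling justified by Corollary \ref{C38}(vi) together with Lemmas \ref{L29} and \ref{A3}, then excision of a pendant cycle via Lemma \ref{A4}), and the base case is correct. But the gap you flag yourself is the heart of the matter, not a bookkeeping nuisance: you argue by contradiction from a maximum matching $M$ containing $e=xy\in\mathscr{F}(G)$, and in the sub-case where the quasi-pendant being deleted is $y$ (with unmatched pendant neighbour $u$), the replacement matching $M-xy+uy$ need not meet $\mathscr{F}(G)$ at all, so the hypothesis you are trying to contradict simply evaporates and the induction has nothing to bite on. The same applies to the terminal case where peeling reduces $G$ to a disjoint union of cycles. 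In fact this sub-case admits a direct kill that your write-up misses: $M-xy+uy$ is a maximum matching of $G$ that fails to cover the cycle vertex $x$, whereas Corollary \ref{C38}(iii) gives $m(G-x)=m(G)-1$, i.e.\ every maximum matching of $G$ must cover every vertex lying on a cycle --- an immediate contradiction. Without this (or an equivalent device) the inductive step is incomplete, which means the lemma is not proved.

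A secondary, fixable defect: you claim $p^{+}(K)=m(K)-c(K)$ by ``combining'' Lemma \ref{A4}(i) with $p^{+}(C_s)=s/2-1$. That implicitly compares $p^{+}(G)$ with $p^{+}(K)+p^{+}(C_s)=p^{+}(G-xy)$, but interlacing (Lemma \ref{DV}) controls vertex deletion, not edge deletion, so the comparison is unjustified as stated. The legitimate routes are either the one the paper uses in the necessity part of Theorem \ref{T312} (delete a neighbour of the attachment vertex on $C_s$, which preserves $p^{+}=m-c$ by Corollary \ref{C38}, then peel the resulting pendant path down to $K$ with Lemmas \ref{L29} and \ref{A3}), or a sandwich: deleting the attachment vertex $v_0\in C_s$ gives $p^{+}(K)+p^{+}(P_{s-1})=p^{+}(G-v_0)\le p^{+}(G)=m(C_s)+m(K)-c(K)-1$, whence $p^{+}(K)\le m(K)-c(K)$, and Theorem \ref{T36} supplies the reverse inequality.
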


$\mathbf{Proof}$ $\mathbf{of}$ $\mathbf{Theorem~\ref{T39}:}$

\begin{proof}(Sufficiency.) We use induction on the order of $\widetilde{G}$. If $\widetilde{G}$ is a disjoint union of trees and/or cycles satisfies the condition {\em(ii)}, clearly the result holds by Lemma \ref{TREE} and Theorem \ref{T31}. So we assume
$\widetilde{G} \in \mathscr{G}$. As $m(T_{\widetilde{G}})=m([T_{\widetilde{G}}])$, by Lemma \ref{A1}, $\widetilde{G}$ contains a pendant vertex $u$ and a quasi-pendant vertex $v$ adjacent to $u$, and $v$ lies outside any cycle of $\widetilde{G}$. Let $\widetilde{H}=\widetilde{G}-\{u, v\}$.
By Lemma \ref{A3}, $m(T_{\widetilde{H}})=m([T_{\widetilde{H}}])$ and $\widetilde{H}$ satisfies the three conditions (i)-(iii) of this theorem. By induction we have $p^{+}(\widetilde{H})= m(\widetilde{H})+c(\widetilde{H})$. So by Lemmas \ref{L29} and \ref{A3}, one has that
$$p^{+}(\widetilde{G})=p^{+}(\widetilde{H})+1= m(\widetilde{H})+c(\widetilde{H})+1= m(\widetilde{G})+c(\widetilde{G}).$$

(Necessity.) Let $\widetilde{G}$ be a mixed graph such that $p^{+}(\widetilde{G})= m(\widetilde{G})+c(\widetilde{G})$. If $\widetilde{G}$ is a forest then $\widetilde{G}$ clearly
satisfies the three conditions (i)-(iii). Assume that $\widetilde{G}$ has at least one cycle. The assertion (i) follows from Corollary \ref{C37}.

We assert that for each cycle $\widetilde{C_{q}}$ of $\widetilde{G}$, we have $q$ is odd, $\eta(\widetilde{C_{q}})$ is even and $|\eta(\widetilde{C_{q}})-q| \equiv 1 \ (\rm{mod} \ 4)  $. If $c(\widetilde{G})=1$, the result holds by Theorem \ref{UNI}. Now assume
$c(\widetilde{G})=l \geq 2$. If there exist a cycle, say $\widetilde{C^{'}}$, which  not satisfies the condition (ii), then by
deleting an arbitrary vertex of each cycle of $\widetilde{G}$ except $\widetilde{C'}$, we get a graph $\widetilde{H}$ with $c(\widetilde{H})=1$ and
$p^{+}(\widetilde{H}) \leq m(\widetilde{H})$ by Theorem \ref{UNI}. By Theorem \ref{T36}, we have
$$p^{+}(\widetilde{G}) \leq p^{+}(\widetilde{H})+l-1 \leq m(\widetilde{H})+l-1< m(\widetilde{G})+c(\widetilde{G}),$$
a contradiction.

We prove the assertion (iii) by the induction on the order of $\widetilde{G}$. If $\widetilde{G}$ is a disjoint union of cycles,
the result follows. So we assume that $\widetilde{G} \in \mathscr{G}$. First suppose that $\widetilde{G}$ contains a pendant
vertex, say $x$, and a quasi-pendant vertex $y$ that is adjacent to $x$. By Corollary \ref{C37}, $y$ is not lying on
any cycle. Let $\widetilde{H}=\widetilde{G}-\{x, y\}$. Then by Lemmas \ref{L29} and \ref{A3}, we have
$$p^{+}(\widetilde{H})=p^{+}(\widetilde{G})-1=m(\widetilde{G})+c(\widetilde{G})-1=m(\widetilde{H})+c(\widetilde{H}).$$
By induction we have $m(T_{\widetilde{H}})=m([T_{\widetilde{H}}])$, hence by Lemma \ref{A3}, one has that
$$m(T_{\widetilde{G}})= m(T_{\widetilde{H}})+1=m([T_{\widetilde{G}}])+1=m([T_{\widetilde{G}}]).$$

Now suppose that $\widetilde{G}$ contains no pendant vertices. Then $\widetilde{G}$ contains a pendant
cycle, say $\widetilde{C}$ such that $\widetilde{C}$ has exactly one vertex say $u$ that is adjacent to a vertex $v$ outside $\widetilde{C}$.
Let $\widetilde{K}=\widetilde{G}-\widetilde{C}$. By Corollary \ref{C37}, we have $p^{+}(\widetilde{G}-u)=m(\widetilde{G}-u)+c(\widetilde{G}-u)$ and $m(\widetilde{G}-u)=m(\widetilde{G})$, which implies that
$p^{+}(\widetilde{K})=m(\widetilde{K})+c(\widetilde{K})$ from the former equality ($p^{+}(\widetilde{G}-u)=m(\widetilde{G}-u)+c(\widetilde{G}-u)$), and $m(\widetilde{G})=m(\widetilde{C})+m(\widetilde{K})$ from the
latter equality ($m(\widetilde{G}-u)=m(\widetilde{G})$). By the induction, we have $m(T_{\widetilde{K}})=m([T_{\widetilde{K}}])$. So $\widetilde{K}$ has a maximum
matching $M(\widetilde{K})$ such that $M(\widetilde{K}) \cap \mathscr{F}(\widetilde{K}) = \emptyset$ by Lemma \ref{A2}. Let $M(\widetilde{C})$ be a maximum matching
of $\widetilde{C}$. Then, $M(\widetilde{G})=M(\widetilde{K}) \cup M(\widetilde{C})$ is a maximum matching of $\widetilde{G}$, which satisfies $M(\widetilde{G}) \cap \mathscr{F}(\widetilde{G})= \emptyset$.
Again by Lemma \ref{A2}, we get $m(T_{\widetilde{G}})=m([T_{\widetilde{G}}])$.
\end{proof}

$\mathbf{Proof}$ $\mathbf{of}$ $\mathbf{Theorem~\ref{T312}:}$

\begin{proof}
(Sufficiency.) We use induction on the order of $\widetilde{G}$. If $\widetilde{G}$ is a disjoint union of trees and/or cycles satisfies the condition {\em(ii)}, clearly the result holds by Lemma \ref{TREE} and Theorem \ref{T31}. So we assume
$\widetilde{G} \in \mathscr{G}$. As $m(T_{\widetilde{G}})=m([T_{\widetilde{G}}])$, by Lemma \ref{A1}, $\widetilde{G}$ contains a pendant vertex $x$ and a quasi-pendant vertex $y$ adjacent to $x$, and $y$ lies outside any cycle of $\widetilde{G}$. Let $\widetilde{H}=\widetilde{G}-\{x, y\}$.
By Lemma \ref{A3}, $m(T_{\widetilde{H}})=m([T_{\widetilde{H}}])$ and $\widetilde{H}$ satisfies the three conditions (i)-(iii). By induction we have $p^{+}(\widetilde{H})= m(\widetilde{H})-c(\widetilde{H})$. So by Lemmas \ref{L29} and \ref{A3}, one has that
$$p^{+}(\widetilde{G})=p^{+}(\widetilde{H})+1= m(\widetilde{H})-c(\widetilde{H})+1= m(\widetilde{G})-c(\widetilde{G}).$$

(Necessity.) Let $\widetilde{G}$ be a mixed graph such that $p^{+}(\widetilde{G})= m(\widetilde{G})-c(\widetilde{G})$. The proof for (i) and
(ii) goes parallel as in Theorem \ref{T39}, thus omitted. We now prove $m(T_{\widetilde{G}})=m([T_{\widetilde{G}}])$ by
the induction on the order of $\widetilde{G}$. First assume that $\widetilde{G}$ contains a pendant vertex $x$ that
is adjacent to a vertex $y$. Then $y$ is lying outside any cycle of $\widetilde{G}$ by Corollary \ref{C38}.
Let $\widetilde{H}=\widetilde{G}-\{x, y\}$. By Lemmas \ref{A3} and \ref{L29}, $p^{+}(\widetilde{H})= m(\widetilde{H})-c(\widetilde{H})$. So, by induction
$m(T_{\widetilde{H}})=m([T_{\widetilde{H}}])$, and hence $m(T_{\widetilde{G}})=m(T_{\widetilde{H}})+1=m([T_{\widetilde{H}}])+1=m([T_{\widetilde{G}}])$ by Lemma \ref{A3}.

If $\widetilde{G}$ contains no pendant vertices, then there exists a pendant cycle $\widetilde{C}$ of $\widetilde{G}$, which contains exactly one vertex, says $u$ that is adjacent to a vertex $v$ outside $\widetilde{C}$. Let $\widetilde{K}=\widetilde{G}-\widetilde{C}$,
and let $\widetilde{H}=\widetilde{K}+u$. Let $w$ be a vertex of $\widetilde{C}$ adjacent to $u$. By Corollary \ref{C38},
$p^{+}(\widetilde{G}-w)= m(\widetilde{G}-w)-c(\widetilde{G}-w)$. Repeatedly deleting the pendant vertex and the quasi-pendant
vertices of $\widetilde{C}-w$ until we arrive at the graph $\widetilde{H}$, we get $p^{+}(\widetilde{H})= m(\widetilde{H})-c(\widetilde{H})$ by Lemmas \ref{A3} and \ref{L29}. By the induction, $m(T_{\widetilde{H}})=m([T_{\widetilde{H}}])$. Suppose that $\widetilde{C}=\widetilde{C_{1}}, \widetilde{C_{2}}, \cdots, \widetilde{C_{l}}$
are all cycles contained in $\widetilde{G}$. By Lemmas \ref{A5} and \ref{A6},
$$m(\widetilde{G})=m([T_{\widetilde{G}}])+\frac{\sum_{i=1}^{l}|V(\widetilde{C_{i}})|}{2}.$$
By a similar discussion, we also have
$$m(\widetilde{H})=m([T_{\widetilde{H}}])+\frac{\sum_{i=2}^{l}|V(\widetilde{C_{i}})|}{2}.$$
Obviously, $T_{\widetilde{G}}$ is isomorphic to $T_{\widetilde{H}}$. Thus $m(T_{\widetilde{G}})=m(T_{\widetilde{H}})$.
Noting that $m(\widetilde{H})=m(\widetilde{K})$ and $m(\widetilde{G})=m(\widetilde{C_{1}})+m(\widetilde{K})$ by Lemma \ref{A4}, we finally have
\begin{eqnarray*}
m(T_{\widetilde{G}})
&=&m(T_{\widetilde{H}})=m([T_{\widetilde{H}}])=m(\widetilde{H})-\frac{\sum_{i=2}^{l}|V(\widetilde{C_{i}})|}{2}\\
&=&m(\widetilde{K})-\frac{\sum_{i=2}^{l}|V(\widetilde{C_{i}})|}{2}=(m(\widetilde{G})-m(\widetilde{C_{1}}))-\frac{\sum_{i=2}^{l}|V(\widetilde{C_{i}})|}{2}\\
&=&m(\widetilde{G})-\frac{\sum_{i=1}^{l}|V(\widetilde{C_{i}})|}{2}\\
&=&m([T_{\widetilde{G}}]).\\
\end{eqnarray*}
\end{proof}


\begin{thebibliography}{99}




\vspace{-7pt}

\vspace{-7pt}\bibitem{LSC} C. Chen, J. Huang, S. Li. On the relation between the H-rank of a mixed graph and the matching number of its underlying graph. Linear Multilinear Algebra. 66 (9) (2018) 1853--1869.

\vspace{-7pt}\bibitem{DCVE} D. Cvetkovi$\acute{c}$, M. Doob, H. Sachs, Spectra of Graphs: Theory and Application, Academic Press, New York, 1980.


\vspace{-7pt}\bibitem{SDAU} S. Daugherty, The inertia of unicyclic graphs and the implications for closed-shells, Linear Algebra Appl. 429 (2008) 849--858.

\vspace{-7pt}\bibitem{FANYZ}  Y. Fan, L. Wang. Bounds for the positive and negative inertia index of a graph.
Linear Algebra Appl. 522 (2017) 15--27.



\vspace{-7pt}\bibitem{GONG} S.-C. Gong, Y.-Z. Fan, Z.-X. Yin, On the nullity of graphs with pendant trees, Linear Algebra Appl. 433 (2010) 1374--1380.




\vspace{-7pt}\bibitem{MOHAR} K. Guo, B. Mohar. Hermitian adjacency matrix of digraphs and mixed graphs. J Graph Theory. 85 (1) (2017) 217--248.

\vspace{-7pt}\bibitem{LXL} J. Liu, X. Li. Hermitian-adjacency matrices and hermitian energies of mixed graphs. Linear
Algebra Appl. 466 (2015) 182--207.




\vspace{-7pt}\bibitem{WONG}  X. Ma, D. Wong, F. Tian. Skew-rank of an oriented graph in terms of matching number.
Linear Algebra Appl. 495 (2016) 242--255.


\vspace{-7pt}\bibitem{MAH} H. Ma, W. Yang, S. Li, Positive and negative inertia index of a graph, Linear Algebra Appl. 438 (2013) 331--341.


\vspace{-7pt}\bibitem{MAH2} X. Ma, D. Wong, F. Tian. Nullity of a graph in terms of the dimension of cycle space and
the number of pendant vertices. Discrete Appl Math. 215 (2016) 171--176.

\vspace{-7pt}\bibitem{MOHAR2} B. Mohar, Hermitian adjacency spectrum and switching equivalence of mixed graphs. Linear Algebra Appl. 489 (2016) 324--340.






\vspace{-7pt}\bibitem{YBJ} Y. Wang, B. Yuan, S. Li. Mixed graphs with $H$-rank 3. Linear Algebra Appl. 524 (2017) 22--34.








\vspace{-7pt}\bibitem{RULA}
S. Rula, A. Chang, Y. Zheng, The extremal graphs with respect to their nullity. J Inequal Appl. 71 (2016). DOI:10.1186/s13660-016-1018-z.

\vspace{-7pt} \bibitem{West} D. B. West, Introduction to Graph
Theory, second ed., Prentice. Hall, Upper Saddle River, NJ, 2001.



\end{thebibliography}
\end{document}